\DeclareMathOperator\spec{spec}
\DeclareMathOperator\stab{stab}
\def\Fred{\mathfrak{Fred}}
\DeclareMathOperator\ind{ind}
\def\orient{\operatorname{or}}
\def\spectral{\operatorname{sp}}
\def\pfaff{\operatorname{pf}}
\def\Pfaff{\operatorname{Pf}}
\def\general{\operatorname{\lambda}}
\DeclareMathOperator\Ex{Ex}
\begin{document}

\newcommand\Vtextvisiblespace[1][.3em]{%
  \mbox{\kern.06em\vrule height.3ex}%
  \vbox{\hrule width#1}%
  \hbox{\vrule height.3ex}}

\def\e#1\e{\begin{equation}#1\end{equation}}
\def\ea#1\ea{\begin{align}#1\end{align}}
\def\eq#1{{\rm(\ref{#1})}}
\theoremstyle{plain}
\newtheorem{thm}{Theorem}[section]
\newtheorem{lem}[thm]{Lemma}
\newtheorem{prop}[thm]{Proposition}
\newtheorem{cor}[thm]{Corollary}
\newtheorem{quest}[thm]{Question}
\newtheorem{prob}[thm]{Problem}
\theoremstyle{definition}
\newtheorem{dfn}[thm]{Definition}
\newtheorem{ex}[thm]{Example}
\newtheorem{rem}[thm]{Remark}
\newtheorem{ax}[thm]{Axiom}
\newtheorem{ass}[thm]{Assumption}
\newtheorem{property}[thm]{Property}
\newtheorem{cond}[thm]{Condition}

\numberwithin{figure}{section}
\numberwithin{equation}{section}
\def\dim{\mathop{\rm dim}\nolimits}
\def\codim{\mathop{\rm codim}\nolimits}
\def\vdim{\mathop{\rm vdim}\nolimits}
\def\sign{\mathop{\rm sign}\nolimits}
\def\Im{\mathop{\rm Im}\nolimits}
\def\det{\mathop{\rm Det}\nolimits}
\def\Ker{\mathop{\rm Ker}}
\def\Coker{\mathop{\rm Coker}}
\def\Spec{\mathop{\rm Spec}}
\def\Perf{\mathop{\rm Perf}}
\def\Vect{\mathop{\rm Vect}}
\def\LCon{\mathop{\rm LCon}}
\def\Flag{\mathop{\rm Flag}\nolimits}
\def\FlagSt{\mathop{\rm FlagSt}\nolimits}
\def\Iso{\mathop{\rm Iso}\nolimits}
\def\Aut{\mathop{\rm Aut}}
\def\End{\mathop{\rm End}\nolimits}

\def\Ho{\mathop{\rm Ho}}
\def\PGL{\mathop{\rm PGL}}
\def\GL{\mathop{\rm GL}}
\def\SL{\mathop{\rm SL}}
\def\SO{\mathop{\rm SO}}
\def\SU{\mathop{\rm SU}}
\def\Sp{\mathop{\rm Sp}}
\def\ch{\mathop{\rm ch}\nolimits}
\def\Spin{\mathop{\rm Spin}}
\def\SF{\mathop{\rm SF}}
\def\Tr{\mathop{\rm Tr}}
\def\U{{\mathbin{\rm U}}}
\def\vol{\mathop{\rm vol}}
\def\inc{\mathop{\rm inc}}

\def\tind{{\text{\rm t-ind}}}
\def\bdim{{\mathbin{\bf dim}\kern.1em}}
\def\rk{\mathop{\rm rk}}
\def\Pic{\mathop{\rm Pic}}
\def\colim{\mathop{\rm colim}\nolimits}
\def\Stab{\mathop{\rm Stab}\nolimits}
\def\Exact{\mathop{\rm Exact}\nolimits}
\def\Crit{\mathop{\rm Crit}}
\def\supp{\mathop{\rm supp}}
\def\rank{\mathop{\rm rank}\nolimits}
\def\Hom{\mathop{\rm Hom}\nolimits}
\def\bHom{\mathop{\bf Hom}\nolimits}
\def\Ext{\mathop{\rm Ext}\nolimits}
\def\cExt{\mathop{{\mathcal E}\mathit{xt}}\nolimits}
\def\id{{\mathop{\rm id}\nolimits}}
\def\Id{{\mathop{\rm Id}\nolimits}}
\def\Sch{\mathop{\bf Sch}\nolimits}
\def\Map{{\mathop{\rm Map}\nolimits}}
\def\AlgSp{\mathop{\bf AlgSp}\nolimits}
\def\Art{\mathop{\bf Art}\nolimits}
\def\HSt{\mathop{\bf HSt}\nolimits}
\def\dSt{\mathop{\bf dSt}\nolimits}
\def\dArt{\mathop{\bf dArt}\nolimits}
\def\TopSta{{\mathop{\bf TopSta}\nolimits}}
\def\Gpds{{\mathop{\bf Gpds}\nolimits}}
\def\Aff{\mathop{\bf Aff}\nolimits}
\def\SPr{\mathop{\bf SPr}\nolimits}
\def\SSet{\mathop{\bf SSet}\nolimits}
\def\Parf{\mathop{\bf Perf}}
\def\Ad{\mathop{\rm Ad}}
\def\pl{{\rm pl}}
\def\mix{{\rm mix}}
\def\fd{{\rm fd}}
\def\fpd{{\rm fpd}}
\def\pfd{{\rm pfd}}
\def\coa{{\rm coa}}
\def\rsi{{\rm si}}
\def\rst{{\rm st}}
\def\ss{{\rm ss}}
\def\vi{{\rm vi}}
\def\smq{{\rm smq}}
\def\rsm{{\rm sm}}
\def\rArt{{\rm Art}}
\def\po{{\rm po}}
\def\spo{{\rm spo}}
\def\Kur{{\rm Kur}}
\def\dcr{{\rm dcr}}
\def\top{{\rm top}}
\def\fc{{\rm fc}}
\def\cla{{\rm cla}}
\def\num{{\rm num}}
\def\irr{{\rm irr}}
\def\red{{\rm red}}
\def\sing{{\rm sing}}
\def\virt{{\rm virt}}
\def\qcoh{{\rm qcoh}}
\def\coh{{\rm coh}}
\def\lft{{\rm lft}}
\def\lfp{{\rm lfp}}
\def\cs{{\rm cs}}
\def\dR{{\rm dR}}
\def\Obj{{\rm Obj}}
\def\cdga{{\mathop{\bf cdga}\nolimits}}
\def\Rmod{\mathop{R\text{\rm -mod}}}
\def\Top{{\mathop{\bf Top}\nolimits}}
\def\modKQ{\mathop{\text{\rm mod-}\K Q}}
\def\modKQI{\text{\rm mod-$\K Q/I$}}
\def\modCQ{\mathop{\text{mod-}\C Q}}
\def\modCQI{\text{\rm mod-$\C Q/I$}}
\def\ul{\underline}
\def\bs{\mathbf}
\def\ge{\geqslant}
\def\le{\leqslant\nobreak}
\def\boo{{\mathbin{\mathbf 1}}}
\def\O{{\mathcal O}}
\def\bA{{\mathbin{\mathbb A}}}
\def\bG{{\mathbin{\mathbb G}}}
\def\bL{{\mathbin{\mathbb L}}}
\def\P{{\mathbin{\mathbb P}}}
\def\bT{{\mathbin{\mathbb T}}}
\def\H{{\mathbin{\mathbb H}}}
\def\K{{\mathbin{\mathbb K}}}
\def\R{{\mathbin{\mathbb R}}}
\def\Z{{\mathbin{\mathbb Z}}}
\def\Q{{\mathbin{\mathbb Q}}}
\def\N{{\mathbin{\mathbb N}}}
\def\C{{\mathbin{\mathbb C}}}
\def\CP{{\mathbin{\mathbb{CP}}}}
\def\KP{{\mathbin{\mathbb{KP}}}}
\def\RP{{\mathbin{\mathbb{RP}}}}
\def\fC{{\mathbin{\mathfrak C}\kern.05em}}
\def\fD{{\mathbin{\mathfrak D}}}
\def\fE{{\mathbin{\mathfrak E}}}
\def\fF{{\mathbin{\mathfrak F}}}
\def\A{{\mathbin{\cal A}}}
\def\G{{\mathbin{\cal G}}}
\def\M{{\mathbin{\cal M}}}
\def\uM{{\mathbin{\underline{\cal M\kern-.1em}\kern.1em}}}
\def\cB{{\mathbin{\cal B}}}
\def\cC{{\mathbin{\cal C}}}
\def\cD{{\mathbin{\cal D}}}
\def\cE{{\mathbin{\cal E}}}
\def\cF{{\mathbin{\cal F}}}
\def\cG{{\mathbin{\cal G}}}
\def\cH{{\mathbin{\cal H}}}
\def\E{{\mathbin{\cal E}}}
\def\F{{\mathbin{\cal F}}}
\def\cG{{\mathbin{\cal G}}}
\def\cH{{\mathbin{\cal H}}}
\def\cI{{\mathbin{\cal I}}}
\def\cJ{{\mathbin{\cal J}}}
\def\cK{{\mathbin{\cal K}}}
\def\cL{{\mathbin{\cal L}}}
\def\bcM{{\mathbin{\bs{\cal M}}}}
\def\cN{{\mathbin{\cal N}\kern .04em}}
\def\cP{{\mathbin{\cal P}}}
\def\cQ{{\mathbin{\cal Q}}}
\def\cR{{\mathbin{\cal R}}}
\def\cS{{\mathbin{\cal S}}}
\def\T{{{\cal T}\kern .04em}}
\def\cW{{\mathbin{\cal W}}}
\def\cX{{\cal X}}
\def\cY{{\cal Y}}
\def\cZ{{\cal Z}}
\def\oM{{\mathbin{\smash{\,\,\overline{\!\!\mathcal M\!}\,}}}}
\def\cV{{\cal V}}
\def\cW{{\cal W}}
\def\g{{\mathfrak g}}
\def\h{{\mathfrak h}}
\def\m{{\mathfrak m}}
\def\u{{\mathfrak u}}
\def\so{{\mathfrak{so}}}
\def\su{{\mathfrak{su}}}
\def\sp{{\mathfrak{sp}}}
\def\fW{{\mathfrak W}}
\def\fX{{\mathfrak X}}
\def\fY{{\mathfrak Y}}
\def\fZ{{\mathfrak Z}}
\def\bM{{\bs M}}
\def\bN{{\bs N}}
\def\bO{{\bs O}}
\def\bQ{{\bs Q}}
\def\bS{{\bs S}}
\def\bU{{\bs U}}
\def\bV{{\bs V}}
\def\bW{{\bs W}\kern -0.1em}
\def\bX{{\bs X}}
\def\bY{{\bs Y}\kern -0.1em}
\def\bZ{{\bs Z}}
\def\al{\alpha}
\def\be{\beta}
\def\ga{\gamma}
\def\de{\delta}
\def\io{\iota}
\def\ep{\epsilon}
\def\la{\lambda}
\def\ka{\kappa}
\def\th{\theta}
\def\ze{\zeta}
\def\up{\upsilon}
\def\vp{\varphi}
\def\si{\sigma}
\def\om{\omega}
\def\De{\Delta}
\def\La{\Lambda}
\def\Om{\Omega}
\def\Ga{\Gamma}
\def\Si{\Sigma}
\def\Th{\Theta}
\def\Up{\Upsilon}
\def\Chi{{\rm X}}
\def\Tau{{T}}
\def\Nu{{\rm N}}
\def\pd{\partial}
\def\ts{\textstyle}
\def\st{\scriptstyle}
\def\sst{\scriptscriptstyle}
\def\w{\wedge}
\def\sm{\setminus}
\def\lt{\ltimes}
\def\bu{\bullet}
\def\sh{\sharp}
\def\di{\diamond}
\def\he{\heartsuit}
\def\op{\oplus}
\def\od{\odot}
\def\op{\oplus}
\def\ot{\otimes}
\def\bt{\boxtimes}
\def\ov{\overline}
\def\bigop{\bigoplus}
\def\bigot{\bigotimes}
\def\iy{\infty}
\def\es{\emptyset}
\def\ra{\rightarrow}
\def\rra{\rightrightarrows}
\def\Ra{\Rightarrow}
\def\Longra{\Longrightarrow}
\def\ab{\allowbreak}
\def\longra{\longrightarrow}
\def\hookra{\hookrightarrow}
\def\dashra{\dashrightarrow}
\def\lb{\llbracket}
\def\rb{\rrbracket}
\def\ha{{\ts\frac{1}{2}}}
\def\t{\times}
\def\ci{\circ}
\def\ti{\tilde}
\def\d{{\rm d}}
\def\md#1{\vert #1 \vert}
\def\ms#1{\vert #1 \vert^2}
\def\bmd#1{\big\vert #1 \big\vert}
\def\bms#1{\big\vert #1 \big\vert^2}
\def\an#1{\langle #1 \rangle}
\def\ban#1{\bigl\langle #1 \bigr\rangle}

\def\L{\mathcal{L}}
\def\scrA{\mathcal{A}}
\def\scrB{\mathcal{B}}
\def\scrO{\mathcal{O}}
\def\std{\mathrm{std}}
\def\Cl{\mathbin{\mathrm{C}\ell}}
\def\D{\mathbin{\slashed{\operatorname{D}}}}
\let\paragraphS\S
\renewcommand\S{\slashed{\mathrm{S}}}

\def\O{\mathcal{O}}
\def\CP{\mathbb{C}P}
\def\Lambdatop{\bigwedge}
\def\PsiDO{\Psi\mathrm{DO}}

\def\Kappa{\mathrm{K}}

\def\Ell{{\mathcal{E}\ell\ell}}

\def\ITEMorient{\textup{(or)}}
\def\ITEMpfaff{\textup{(pf)}}
\def\ITEMspectral{\textup{(sp)}}
\def\ITEMgeneral{\textup{($\general$)}}

\title{A categorified excision principle\\
for elliptic symbol families}
\author{Markus Upmeier}
\date{\today}
\maketitle

\begin{abstract}
We develop a categorical index calculus for elliptic symbol families.
The categorified index problems we consider are a secondary
version of the traditional problem of expressing the index class
in $K$-theory in terms of differential-topological data.
They include orientation problems for moduli spaces as well as
similar problems for skew-adjoint and self-adjoint operators.
The main result of this paper is an excision principle which allows
the comparison of categorified index problems on different manifolds.
Excision is a powerful technique for actually solving the orientation problem;
applications appear in the companion papers Joyce--Tanaka--Upmeier~\cite{JTU},
Joyce--Upmeier~\cite{JoUp}, and Cao--Joyce~\cite{CaJo}.
\end{abstract}


\section{Introduction}

Index theory assigns the numerical invariant
$\ind P = \dim \Ker P - \dim \Coker P$
to an elliptic operator $P$ on a compact manifold $X$.
The Atiyah--Singer index theorem \cite{AtSi1} solves the
index problem of identifying this invariant in terms
of topological data on $X$. A key technique in its proof
is the following excision principle of Seeley~\cite[A.1]{Seel}.
Let $U$ be contained as an open subset $U^\pm$ in two compact
manifolds $X^\pm$. Let $P^\pm, Q^\pm$ be a pair of elliptic
operators on each of the manifolds $X^\pm$, each pair differing at most on $U$.
Then, deforming the problem through pseudo-differential operators,
the index difference $\ind Q^\pm - \ind P^\pm$ can be
concentrated on $U^\pm$, see for example~\cite[\paragraphS7.1]{DoKr}. In particular,
if we assume $P_+ = P_-$ and $Q_+ = Q_-$ over $U$
it is not hard to believe the excision formula
\e\label{classical-excision}
	\ind Q_-- \ind P_- = \ind Q_+ - \ind P_+.
\e
The excision principle is one of the most powerful techniques of index theory.
Once it is established, one can compare the index problem on any compact
manifold to that on a sphere, where it is solved using Bott periodicity.\medskip

In proving \eqref{classical-excision} and extending to families the key is to
establish a formal calculus for the index map in terms of topological $K$\nobreakdash-theory.
In this calculus $\ind P$ depends only on the principal symbol, is functorial, and
can be extended to open manifolds. These properties lead to \eqref{classical-excision}
and, combined with the multiplicative property, to the families index theorem.\medskip

In his study of orientations in Yang--Mills theory, Donaldson introduced in
\cite[(3.10)]{Dona2} a formally similar excision isomorphism for the determinant
\[
	\det D_-
	\longrightarrow
	\det D_+,
\]
where $D_\pm$ are first order differential operators on $X_\pm$, skew-adjoint outside $U_\pm$.
It is based on an adiabatic construction of solutions to the differential equation $D_+f_+=0$,
given a solution $D_- f_- = 0$. An alternative proof closer to our discussion here in terms of
pseudo-differential operators is explained in Donaldson--Kronheimer \cite[\paragraphS7.1]{DoKr}.\medskip

In this paper, we shall expand these ideas and develop a new `categorical index calculus'
in Theorem~\ref{thm:symbol-calculus}.
In a categorified index problem one assigns, rather than numbers, $G$\nobreakdash-torsors
to Fredholm operators ($G = \Z_2, \Z$). We shall consider the following three basic cases
$\lambda \in \{\orient, \pfaff, \spectral\}$:
\begin{enumerate}
\item[{\ITEMorient}] The \emph{orientation problem} for real Fredholm operators to which we assign
the $\Z_2$\nobreakdash-torsor of orientations $\orient(P)$ on the
vector space ${\Ker P \oplus (\Coker P)^*}$.
\item[{\ITEMpfaff}] The \emph{skew orientation problem} for real skew-adjoint Fredholm operators
to which we assign the $\Z_2$\nobreakdash-torsor of orientations $\pfaff(P)$ on ${\Ker P}$.
\item[{\ITEMspectral}]  The \emph{spectral orientation problem} for self-adjoint Fredholm operators
to which we assign their spectral $\Z$\nobreakdash-torsor $\spectral(P)$, see Definition~\textup{\ref{dfn:spectral-torsor}}.
\end{enumerate}
Thus $\general(P)$ plays the role of $\ind(P)$ and the category of $G$-torsors plays the
role of the topological $K$-theory group. A key point is that all three orientation problems
are deformation invariant. In particular, restricted to pseudo-differential operators they only
depend on principal symbols, see Section~\ref{sssec:covering-symbol}, which is the starting
point for the categorical index calculus in Section~\ref{ssec:proof-calc}.\medskip

Rather than to connected components $\pi_0$ these problems now correspond to
$\pi_1 \Fred_\R = \Z_2$,
$\pi_1 \Fred_\R^\mathrm{skew} = \Z_2$,
and $\pi_1 \Fred_\C^\mathrm{sa}=\Z$.
In the first two cases, the universal covers are known as the real
determinant line bundle and the real Pfaffian line bundle, see for example
Freed~\cite{Fre}, turned into double covers. The universal cover
of $\Fred_\C^\mathrm{sa}$
is less familiar and in Section~\ref{ssec:spectral-cover} we construct the
spectral cover for self-adjoint families, a principal $\Z$\nobreakdash-bundle. This
construction is new and in Theorem~\ref{thm:sa-generalFred} we establish also
its connection to the transgression of the complex determinant line bundle.
The last case {\ITEMspectral} has potential applications to orientations graded
over $\Z_n$ for $n\in \N$.\medskip

The categorical index calculus is then applied to prove Theorem~\ref{main-theorem}.
In all basic cases $\lambda \in \{\orient, \pfaff, \spectral\}$
we establish a \emph{canonical excision isomorphism}
\[
	\Ex\colon \general (P_+)^* \otimes \general (Q_+)
	\longrightarrow
	\general (P_-)^* \otimes \general (Q_-)
\]
together with its functoriality properties and its compatibilities with other constructions.
These properties will be crucial in Joyce--Upmeier~\cite{JoUp} to
solve the orientation problem for twisted Dirac operators $P=\D_{\Ad E}$
on a $7$\nobreakdash-dimensional compact spin manifold in terms of a flag structure on
$X$, with applications to the study of moduli of $G_2$\nobreakdash-instantons \cite{DoSe}.\medskip

Categorified index problems are
secondary index problems, meaning they only make sense when the corresponding
traditional index vanishes. For {\ITEMorient} this is the orientability of the real
determinant line bundle, which amounts to the vanishing of the first Stiefel--Whitney
class of the index class $\ind P \in KO^0(Y)$ from \cite{AtSi5}.
For {\ITEMpfaff} and {\ITEMspectral} we have by \cite{AtSiSkew} and
by \cite[\paragraphS3]{AtPaSi3} an index in $KO^{-1}(Y)$
and $K^{-1}(Y)$ whose images in $H^1(Y;\Z_2)$ and $H^1(Y;\Z)$ must vanish.
Orientability questions can thus be treated by classical index theory, but for
questions of picking actual orientations this is no longer sufficient.
To do this, traditional equalities must be replaced by \emph{canonical isomorphisms}.

\subsubsection*{Outline of the paper}

In Section~\ref{ssec:elliptic-symbol-families} we set up the necessary terminology for
elliptic symbols to formulate the categorical
index calculus. This is done in Section~\ref{ssec:main-results} as Theorem~\ref{thm:symbol-calculus},
where we state also our other main result, Theorem~\ref{main-theorem}, on excision.
A simplified
version of the excision principle for gauge theory is stated as Theorem~\ref{thm:gauge-special}.
Assuming the categorical index calculus, we prove
Theorem~\ref{main-theorem} in Section~\ref{ssec:proofMAIN}.

The rest of the paper establishes the categorical index calculus
in the three basic cases.
Our
orientation conventions are fixed in Section~\ref{ssec:sign-conventions}.
Sections~\ref{ssec:Determinant} and \ref{ssec:Pfaffian} review known results for
the determinant and Pfaffian line bundles.
The spectral cover for self-adjoint Fredholm operators is constructed in Section~\ref{ssec:spectral-cover}.
As an aside, the relationship with the transgression of the complex determinant line
clarified in Theorem~\ref{thm:sa-generalFred}. After these preparations, we then
prove Theorem~\ref{thm:symbol-calculus}, the categorical symbol calculus, in Section~\ref{ssec:proof-calc}.

For convenience, we have collected in Appendix~\ref{sec-symbols-and-operators} some elementary
background  on compactly supported pseudo-differential operators.\\

This is the first of a series of papers on orientations in gauge theory.
The second paper Joyce--Tanaka--Upmeier~\cite{JTU} establishes
the general theory and gives examples of solutions to
orientation problems in dimensions up to $6$, some of which are new.
The third paper Joyce--Upmeier~\cite{JoUp} solves the orientation problem for Dirac operators
in dimension $7$.
In the fourth paper, Cao--Joyce \cite{CaJo} will prove orientability of the
moduli space of $\Spin(7)$\nobreakdash-instantons in dimension~$8$ using
our excision principle. Finally, \cite{CaGrJo} will deal with the connection
to algebraic geometry.\medskip

\noindent{\it Acknowledgements.}
The author was funded by DFG grant UP~85/3-1, 
by grant UP~85/2-1 of the DFG priority program SPP~2026 `Geometry at Infinity,' and
by the `Centre for Quantum Geometry of Moduli Spaces' of the DNRF.
The author would like to thank Dominic Joyce for numerous discussions.
The author would also like to thank Yalong Cao, Simon Donaldson,
Sebastian Goette, Jacob Gross, Yuuji Tanaka, and Thomas Walpuski
for helpful conversations.

\section{Categorical index calculus}

\subsection{Elliptic symbol families}\label{ssec:elliptic-symbol-families}

\subsubsection{The category of elliptic symbol families}

For the precise meaning of (continuous) $Y$\nobreakdash-families of smooth objects parameterized by a
topological space $Y$ we refer to Appendix~\ref{families}.

\begin{dfn}[{Atiyah--Singer~{\cite[p.~491]{AtSi1}}}]
Let $Y$ be a space, $X$ a manifold, not necessarily compact,
and $\pi\colon \left(T^*X \setminus 0_X\right) \times Y\to X\times Y$ the projection.
A \emph{$Y$\nobreakdash-family $\{p_y\}_{y \in Y}$ of elliptic symbols over $X$
of order $m\in \R$} consists of Hermitian vector bundles 
$E^0, E^1 \to X \times Y$ and a $Y$\nobreakdash-family of isomorphisms
\[
	p\colon \pi^*E^0 \longrightarrow \pi^*E^1
\]
satisying
$p_{\lambda\cdot \xi,y} = \lambda^m \cdot p_{\xi,y}$
for all $0\neq \xi \in T^*X, y\in Y, \lambda > 0$
on the fibers.
The $Y$\nobreakdash-families of $m$\nobreakdash-th order elliptic symbols form the set $\Ell_Y^m(X;E^0,E^1)$.
We consider the following \emph{types} of elliptic symbols:
\begin{enumerate}
	\item[\ITEMorient]
	$p$ is \emph{real} if $E^0, E^1$ have orthogonal real structures and
	$\overline{p_{\xi,y}(e)} = p_{-\xi,y}(\overline{e})$.
	\item[\ITEMpfaff]
	$p$ is \emph{real skew-adjoint} if $p$ is real and $(p_{\xi,y})^\dagger = - p_{\xi,y}$.
	\item[\ITEMspectral]
	$p$ is \emph{self-adjoint} if $(p_{\xi,y})^\dagger = p_{\xi,y}$.
\end{enumerate}
\end{dfn}

Direct sums and adjoints of elliptic symbols are formed pointwise.

\begin{dfn}
Let $p_\pm \in \Ell_Y^m(X_\pm;E^0_\pm, E^1_\pm)$ be families
of elliptic symbols of the same order $m$ on manifolds $X_\pm$.
Let $\phi \colon X_- \to X_+$ be an open embedding.
An \emph{identification $\Phi\colon p_- \to p_+$ of symbols over $\phi$}
are two $Y$\nobreakdash-families of unitary isomorphisms
$\Phi^\bu\colon E^\bu_- \to \phi^*E^\bu_+$, $\bu=0,1$, satisfying
\[
\forall 0\neq \xi\in T^*X_-, y \in Y:
	(p_+)_{d\phi(\xi),y}\circ \Phi^0 = \Phi^1\circ (p_-)_{\xi,y}.
\]
We consider the following \emph{types} of identifications:
$\Phi$ is
{\ITEMorient} \emph{real} if $E^\bu_\pm$ have orthogonal real structures with which $\Phi^0, \Phi^1$ commute,
{\ITEMpfaff} \emph{real skew-adjoint} if it is real and $\Phi^0 = -\Phi^1$, and
{\ITEMspectral} \emph{self-adjoint} if $\Phi^0 = \Phi^1$.
\end{dfn}

There is then a monoidal category $\Ell_Y^m(X)$ whose
objects are $m$\nobreakdash-th order $Y$\nobreakdash-families of elliptic symbols.
The morphisms are identifications over ${\phi=\id_X}$. An embedding
${\phi\colon X_- \to X_+}$ induces a functor
${\phi^*\colon \Ell_Y^m(X_+) \to \Ell_Y^m(X_-)}$, so $\Ell_Y^m(\cdot)$ organizes into
a presheaf of groupoids on the site of manifolds. The decategorification
$\pi_0\Ell_Y^m(X)$ is $K^0_\mathrm{cpt}(Y\times T^*X)$, see for
example \cite[p.~100]{LaMi}.

\begin{prop}\label{prop:red-of-order}
We have a reduction of order functor:
\begin{enumerate}
\item[\textup{(i)}]
If $p \in \Ell_Y^m(X;E^0, E^1)$, then $p(p^\dagger p)^{-1/2} \in \Ell_Y^0(X; E^0, E^1)$.
\item[\textup{(ii)}]
If $\Phi \colon p_- \to p_+$ in $\Ell_Y^m(X)$, then $\Phi\colon p_-(p_-^\dagger p_-)^{-1/2} \to p_+(p_+^\dagger p_+)^{-1/2}$ in $\Ell_Y^0(X)$.
\end{enumerate}
\end{prop}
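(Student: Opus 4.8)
The plan is to reduce everything to fibrewise linear algebra plus one regularity check. For part~(i), at a fixed point $(\xi,y)\in (T^*X\setminus 0_X)\times Y$ with $x=\pi(\xi)$, the map $a=p_{\xi,y}\colon E^0_{(x,y)}\to E^1_{(x,y)}$ is a linear isomorphism of Hermitian spaces, so $a^\dagger a$ is a positive-definite self-adjoint endomorphism of $E^0_{(x,y)}$; hence $(a^\dagger a)^{1/2}$ and its inverse $(a^\dagger a)^{-1/2}$ are defined by the functional calculus and are themselves positive-definite self-adjoint isomorphisms, and $a(a^\dagger a)^{-1/2}$ is an isomorphism $E^0\to E^1$ — indeed it is precisely the unitary part in the polar decomposition $a = a(a^\dagger a)^{-1/2}\cdot(a^\dagger a)^{1/2}$, so $p(p^\dagger p)^{-1/2}$ even takes values in unitary isomorphisms. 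Next I would check the homogeneity: for $\lambda>0$ one has $p_{\lambda\xi,y}=\lambda^m p_{\xi,y}$ and, $\lambda$ being real, also $p_{\lambda\xi,y}^\dagger=\lambda^m p_{\xi,y}^\dagger$, hence $(p^\dagger p)_{\lambda\xi,y}=\lambda^{2m}(p^\dagger p)_{\xi,y}$ and $(p^\dagger p)^{-1/2}_{\lambda\xi,y}=\lambda^{-m}(p^\dagger p)^{-1/2}_{\xi,y}$; the two factors $\lambda^{\pm m}$ cancel, so $p(p^\dagger p)^{-1/2}$ is homogeneous of order~$0$.

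The one point that is not purely formal is that $p(p^\dagger p)^{-1/2}$ is a genuine continuous $Y$\nobreakdash-family, i.e.\ that it retains the smooth dependence on $(x,\xi)$ and the continuous dependence on $y$ of $p$. This I would deduce from the fact that $A\mapsto A^{-1/2}$ is smooth on positive-definite self-adjoint matrices — it is inversion composed with the inverse of the diffeomorphism $A\mapsto A^2$, whose differential $H\mapsto AH+HA$ is invertible, so the implicit function theorem applies; equivalently one may use the contour formula $A^{-1/2}=\frac1{2\pi i}\oint z^{-1/2}(z-A)^{-1}\,dz$. Since the base $(T^*X\setminus 0_X)\times Y$ is non-compact I would run this locally, over neighbourhoods on which the spectrum of $p^\dagger p$ lies in a fixed compact subinterval of $(0,\infty)$, the local pieces agreeing by uniqueness of the positive square root. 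The same functional calculus shows the construction respects the three types: for $p$ self-adjoint, $p^\dagger p=p^2$ and $p(p^2)^{-1/2}=\sgn(p)$; for $p$ real skew-adjoint, $p$ commutes with $p^\dagger p=-p^2$ and a one-line computation gives $(p(p^\dagger p)^{-1/2})^\dagger=-p(p^\dagger p)^{-1/2}$; and for $p$ real, a short computation with the antilinear structures shows $p^\dagger p$ is equivariant under $\xi\leftrightarrow-\xi$, hence so is its inverse square root.

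For part~(ii) the key observation is that the intertwining relation and the unitarity of $\Phi$ pass through the functional calculus. From $(p_+)_{\xi,y}\circ\Phi^0=\Phi^1\circ (p_-)_{\xi,y}$ and $(\Phi^\bu)^{-1}=(\Phi^\bu)^\dagger$, taking adjoints yields $p_+^\dagger=\Phi^0\,p_-^\dagger\,(\Phi^1)^{-1}$, hence $p_+^\dagger p_+=\Phi^0\,(p_-^\dagger p_-)\,(\Phi^0)^{-1}$. Conjugation by the unitary $\Phi^0$ preserves positivity and self-adjointness and commutes with the functional calculus, so $(p_+^\dagger p_+)^{-1/2}=\Phi^0\,(p_-^\dagger p_-)^{-1/2}\,(\Phi^0)^{-1}$, and therefore
\[
  p_+(p_+^\dagger p_+)^{-1/2}\,\Phi^0 = p_+\,\Phi^0\,(p_-^\dagger p_-)^{-1/2} = \Phi^1\,p_-(p_-^\dagger p_-)^{-1/2},
\]
which is exactly the identification condition for $\Phi$ viewed as a morphism between the reduced order\nobreakdash-$0$ symbols. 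Since $\Phi^0,\Phi^1$ are literally unchanged, their unitarity and — in the real, skew-adjoint and self-adjoint cases — the extra constraints (which involve only $\Phi^0,\Phi^1$) are automatic, and functoriality, i.e.\ preservation of identities and composition, is immediate. Thus the only substantive work is the regularity assertion in~(i); the remainder is bookkeeping with homogeneity weights and elementary identities for adjoints and conjugations.
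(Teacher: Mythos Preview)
Your proof is correct. The paper states this proposition without proof, treating it as an elementary observation immediate from the definitions; your argument supplies exactly the routine verifications (polar decomposition, homogeneity cancellation, smoothness of $A\mapsto A^{-1/2}$, and the conjugation identity $p_+^\dagger p_+=\Phi^0(p_-^\dagger p_-)(\Phi^0)^{-1}$) that the paper leaves implicit.
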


Finally, we introduce the following
terminology for dealing with zeroth-order symbols on
open manifolds.

\begin{dfn}
Let $L \subset X$ be a compact set and $p \in \Ell_Y^0(X;E^0,E^1)$.
Then $p$ is \emph{compactly supported} in $L$ if
there exists a (unique) $Y$\nobreakdash-family of
bundle isomorphism ${\widetilde{p}\colon E^0|_{(X \setminus L) \times Y} \to E^1|_{(X \setminus L) \times Y}}$ satisfying the following conditions:
\begin{gather}
\forall (x,y) \notin L, \xi \in T^*_x X : p_{\xi, y} = \widetilde{p}_{x,y}.\nonumber\\
\forall y \in Y \mkern2mu\exists c, C > 0 \mkern4mu\forall x \notin L_y: c \leq \|\widetilde{p}_{x,y}\| \leq C.\label{dfn.symb-compactly-supp-bounds}\\
\begin{gathered}\nonumber
\text{$\forall \varepsilon > 0, y_0 \in Y$ there exists a neighborhood $V$ of $y_0$ such that:}\\
(x,y) \in (X \times V) \setminus L
\implies
\|\widetilde{p}_{x,y} - \widetilde{p}_{x,y_0}\| \leq \varepsilon.
\end{gathered}
\end{gather}
For $Z$ compact Hausdorff and $p \in \Ell_{Y\times Z}(X;E^0,E^1)$
we more generally allow $L\subset X\times Z$ to vary
along the compact $Z$ parameter.
\end{dfn}

\subsubsection{The category of relative pairs}

The terminology of this section is not needed for the categorical index calculus,
but is used in the formulation of the excision theorem.

An identification corresponds
to the data that identifies pseudo-differential operators
$P_\pm$ on $X_\pm$. As such, they induce isomorphisms
${\Phi^*\colon \general(p_+) \to \general(p_-)}$, see Theorem~\ref{thm:symbol-calculus}(i).
Excision extends this \emph{global} functoriality to the case where we have
a pair $(P_\pm, Q_\pm)$ of operators on each of the spaces $X_\pm$ and where the diffeomorphism
$\phi$ only needs to be defined \emph{locally} where the operators differ, see Figure~\ref{fig1}.
The idea is that the index-theoretic information contained in $P_-$ and $Q_-$
can be compressed close to where the operators differ. Similarly for $P_+$ and $Q_+$. The compressed solutions to the (pseudo-)differential equations can then
be mapped back and forth using a only locally defined diffeomorphism.

The data needed to perform the compression in a canonical way promotes
the pair $(p,q)$ of principal symbols to a relative pair $(p,\Xi,q)$:

\begin{dfn}\label{dfn:relative-pair}
Let $p \in \Ell_Y^m(X; E^0, E^1)$, $q \in \Ell_Y^m(X; F^0, F^1)$.
An identification
$\Xi\colon p|_{X\setminus L} \to q|_{X\setminus L}$ over the identity
defined outside a closed subset ${L\subset X}$ is called a
\emph{relative $Y$\nobreakdash-pair} $(p,\Xi,q)$ with \emph{support} $L$.
A relative pair has \emph{type} {\ITEMgeneral} if all of $p, q, \Xi$
have type {\ITEMgeneral}.
\end{dfn}

The motivation for this terminology is that
the identification $\Xi$
promotes the isomorphism class
$[p]-[q] \in K^0_\mathrm{cpt}(Y\times T^*X)$ 
to a relative cohomology class in
$K^0_\mathrm{cpt}\left(Y\times T^*X, Y\times T^*(X\setminus L)\right)$.
We shall see that $\Xi$ allows us to `compress' the orientation cover
$\general(p)^*\otimes\general(q)$
into a neighborhood of the support of $\Xi$.
To make this idea precise, we introduce the following notion.

\begin{dfn}\label{dfn:iso-of-rel-pairs}
Let $(p_\pm, \Xi_\pm, q_\pm)$ be relative $Y$\nobreakdash-pairs
on manifolds $X_\pm$ with compact supports $L_\pm$
and $\phi \colon U_- \to U_+$ a diffeomorphism of
open sets ${U_\pm \subset X_\pm}$ containing $L_\pm$.
An \emph{isomorphism of relative $Y$\nobreakdash-pairs} over the base map $\phi$, written
\e\label{eqn:concentrated-symb-iso}
\xymatrix@C=15ex{
(p_-,\Xi_-,q_-)|_{U_-} 
\ar[r]|{(\phi,\Pi,\Kappa)}
&(p_+,\Xi_+,q_+)|_{U_+},
}
\e
consists of two identifications
${\Pi\colon p_-|_{U_-} \to p_+|_{U_+}}$, ${\Kappa\colon q_-|_{U_-} \to q_+|_{U_+}}$ over $\phi$
satisfying $\Xi_+\circ \Pi|_{U_-\setminus C} = \Kappa \circ \Xi_-|_{U_-\setminus C}$ outside a compact subset $C \subset U_-$ with $L_- \subset C$ and $L_+ \subset \phi(C)$.
We then say the \emph{support} of \eqref{eqn:concentrated-symb-iso} is contained in $C$.
By enlarging the supports of the relative pairs we can always assume $L_- = C, L_+ = \phi(C)$.
An isomorphism \eqref{eqn:concentrated-symb-iso} is
of \emph{type} {\ITEMgeneral} if $(p_\pm, \Xi_\pm, q_\pm)$ and $\Pi, \Kappa$ are all of
type {\ITEMgeneral}.
\end{dfn}

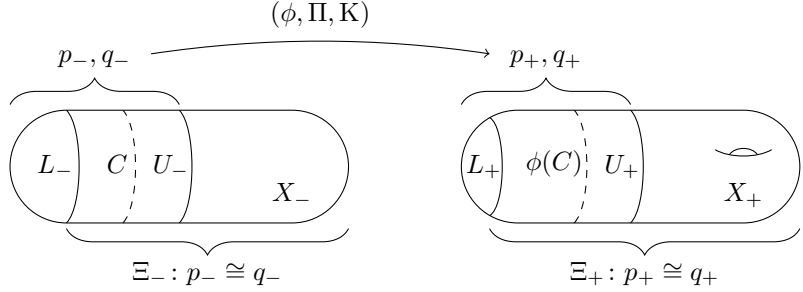
\begin{figure}[h]
\centering
\begin{tikzpicture}[scale=0.75]

\begin{scope}
\begin{scope}
\draw(-2,1) -- (2,1) arc (90:-90:1) -- (-2,-1) arc (270:90:1);
\draw (2,-0.5) node {$X_-$};

\draw (0,-1) .. controls (0.3,-0.7) and (0.3,0.7) .. (0,1); \draw (-0.15,0) node{$U_-$};

\begin{scope}[xshift=-1cm]
	\draw[dashed] (0,-1) .. controls (0.3,-0.7) and (0.3,0.7) .. (0,1);
\end{scope}
\draw (-1.1,0.05) node{$C$};

\begin{scope}[xshift=-2cm]
	\draw (0,-1) .. controls (0.3,-0.7) and (0.3,0.7) .. (0,1); \draw (-0.2,0) node{$L_-$};
\end{scope}

\draw[decorate, decoration={brace,amplitude=10pt}] (3,-1.1) -- (-2,-1.1) node[midway,yshift=-0.6cm]{$\Xi_-\colon p_- \cong q_-$};

\draw[decorate, decoration={brace,amplitude=10pt}] (-3,1.1) -- (0,1.1) node[midway,yshift=0.6cm]{$p_-, q_-$};
\end{scope}
\end{scope}

\draw[->](-0.5,2) .. controls (1.5,2.3) and (3.5,2.3) .. (5.5,2);
\draw (2.5,2.7) node{$(\phi,\Pi,\Kappa)$};

\begin{scope}[xshift=8cm]
\begin{scope}
\draw(-2,1) -- (2,1) arc (90:-90:1) -- (-2,-1) arc (270:90:1);
\draw (2,-0.5) node {$X_+$};


\draw (1.5,0.3) .. controls (1.75,0.15) and (2.25,0.15) .. (2.5,0.3);

\begin{scope}[yshift=-0.09cm]
\draw (1.75,0.3) .. controls (1.85,0.45) and (2.15,0.45) .. (2.25,0.3);
\end{scope}

\draw (0,-1) .. controls (0.3,-0.7) and (0.3,0.7) .. (0,1); \draw (-0.15,0) node{$U_+$};

\begin{scope}[xshift=-1cm]
	\draw[dashed] (0,-1) .. controls (0.3,-0.7) and (0.3,0.7) .. (0,1); \draw (-0.35,0.05) node{$\phi(C)$};
\end{scope}

\begin{scope}[xshift=-2.5cm]
	\draw (0,-0.87) .. controls (0.3,-0.7) and (0.3,0.7) .. (0,0.87); \draw (-0.1,0) node{$L_+$};
\end{scope}

\draw[decorate, decoration={brace,amplitude=10pt}] (3,-1.1) -- (-2.5,-1.1) node[midway,yshift=-0.6cm]{$\Xi_+\colon p_+ \cong q_+$};

\draw[decorate, decoration={brace,amplitude=10pt}] (-3,1.1) -- (0,1.1) node[midway,yshift=0.6cm]{$p_+, q_+$};
\end{scope}
\end{scope}

\end{tikzpicture}
\caption{An isomorphism of relative pairs.}
\label{fig1}
\end{figure}

To this setup, depicted in Figure~\ref{fig1}, we shall attach an excision isomorphism.
We will also prove its independence under the following deformations.

\begin{dfn}
Let $Y$ be a topological space and $Z$ compact Hausdorff.
There is a version of Definition~\ref{dfn:relative-pair}
in which $L\subset X\times Z$ may change along the
compact parameter $z \in Z$.
Similarly, in Definition~\ref{dfn:iso-of-rel-pairs}
we can take $U_\pm \subset X_\pm \times Z$ open and $\phi\colon U_-\to U_+$ to
be a $Z$\nobreakdash-family of diffeomorphisms. In this case
we speak of a \emph{$Z$\nobreakdash-deformation} of relative $Y$\nobreakdash-pairs
\e\label{eqn:Zdeformation}
\xymatrix@1@C=15ex{\ar[r]|{(\phi_z,\Pi_z,\Kappa_z)} (p_{-,z},\Xi_{-,z}, q_{-,z}) &(p_{+,z},\Xi_{+,z}, q_{+,z})},\quad z\in Z.
\e
\end{dfn}


\subsection{Statement of results}\label{ssec:main-results}

\subsubsection{Index calculus}

We now present the calculus that is a categorical version
of the $K$\nobreakdash-theory calculus for the index map.
Proofs are given in Section~\ref{ssec:proof-calc}.

\begin{thm}\label{thm:symbol-calculus}
Let $X$ be a manifold, $Y$ a topological space, and $\general \in \{\orient, \pfaff, \spectral\}$.
To every $Y$\nobreakdash-family of elliptic symbols $p \in \Ell_Y^m(X)$ of type {\ITEMgeneral},
zeroth-order compactly supported if $X$ is not compact, there is associated a $\Z_2$\nobreakdash-graded
$G$\nobreakdash-principal bundle $\general(p) \to Y$ with
\e\label{eqn:degree}
	\deg \general(p)
	\overset{2}{\equiv}
	\begin{cases}
	\ind (p),\\
	\dim_\R \Ker(p),\\
	0,
	\end{cases}
	\qquad
	G=
	\begin{cases}
	\Z_2 & \ITEMorient,\\
	\Z_2 & \ITEMpfaff,\\
	\Z & \ITEMspectral,
	\end{cases}
\e
and the following formal properties:
\begin{enumerate}
\item[\textup{(i)}]
An identification $\Phi\colon p_- \to p_+$ of type {\ITEMgeneral} covering a diffeomorphism $\phi\colon X_- \to X_+$
induces a \emph{functoriality isomorphism}
\e\label{eqn:maps-induced-by-identification}
	\Phi^*
	\colon
	\general(p_+)
	\longrightarrow
	\general(p_-)
\e
satisfying $(\Phi\circ \Psi)^* = \Psi^*\circ\Phi^*$ and $\id^* = \id$.
\item[\textup{(ii)}]
For $p, q \in \Ell_Y^m(X)$ of type {\ITEMgeneral} we have a \emph{direct sum isomorphism}
\e\label{eqn:symbols-direct-sums}
	\general(p\oplus q)
	\longrightarrow
	\general(p)\otimes\general(q),
\e
which is associative, graded commutative, and natural for \eqref{eqn:maps-induced-by-identification}.
\item[\textup{(iii)}]
For $p \in \Ell_Y^m(X)$ of type {\ITEMgeneral} we have an \emph{adjointness isomorphism}
\e\label{eqn:symbols-adjoints}
	\general(-p^\dagger)
	\longrightarrow
	\general(p)^*,
\e
natural for \eqref{eqn:maps-induced-by-identification} and compatible with \eqref{eqn:symbols-direct-sums}.
\item[\textup{(iv)}]
Let $p \in \Ell_Y^m(X)$ be of type {\ITEMgeneral}.
An open embedding $i\colon U \hookrightarrow X$ induces a \emph{pushforward isomorphism}
\e\label{eqn:pushforwards}
	i_!\colon
	\general(i^*p)
	\longrightarrow
	\general(p),
\e
natural for \eqref{eqn:maps-induced-by-identification}, compatible with \eqref{eqn:symbols-direct-sums} and \eqref{eqn:symbols-adjoints}, and satisfying
\[
	i_!j_! = (ij)_!,
	\qquad
	\id_! = \id.
\]
\item[\textup{(v)}]
For $p \in \Ell_Y^m(X)$ of type {\ITEMgeneral} we have
$p(p^\dagger p)^{-1/2} \in \Ell_Y^0(X)$ of type {\ITEMgeneral}.
There is a \emph{reduction of order isomorphism}
\e\label{eqn:symbols-reduction-of-order}
	\general(p)
	\longrightarrow
	\general(p(p^\dagger p)^{-1/2}),
\e
compatible with \eqref{eqn:symbols-direct-sums}--\eqref{eqn:pushforwards}.
An identification $\Phi \colon p_- \to p_+$ of type {\ITEMgeneral}
is also an identification $p_-(p_-^\dagger p_-)^{-1/2}
\to p_+(p_+^\dagger p_+)^{-1/2}$ of the same type, and then
\eqref{eqn:symbols-reduction-of-order} is natural for \eqref{eqn:maps-induced-by-identification}.
\end{enumerate}
\end{thm}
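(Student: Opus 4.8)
The plan is to reduce the entire theorem to the already-known Fredholm-operator versions of these constructions, using the contractibility of the space of pseudo-differential operators with a prescribed principal symbol. First I would recall from Appendix~\ref{sec-symbols-and-operators} that a zeroth-order elliptic symbol $p\in\Ell_Y^0(X)$, compactly supported if $X$ is noncompact, admits pseudo-differential lifts: once auxiliary metrics and Sobolev exponents are fixed, the set $\mathrm{Op}(p)$ of continuous $Y$-families of elliptic $\Psi$DOs with principal symbol $p$ (compactly supported $\Psi$DOs in the noncompact case), acting between the relevant Sobolev completions of sections of $E^0$ and $E^1$, is nonempty and convex, hence contractible, since two lifts differ by a family of (compactly supported) $\Psi$DOs of strictly negative order, which form a topological vector space. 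For order $m\neq 0$ on a compact manifold one first composes with a fixed invertible elliptic $\Psi$DO of order $m$ to pass to the zeroth-order case; this is the analytic side of Proposition~\ref{prop:red-of-order} and of property~(v). Each $P\in\mathrm{Op}(p)$ is a $Y$-family of Fredholm operators of the type dictated by $\general$ --- real for {\ITEMorient}, real skew-adjoint for {\ITEMpfaff}, self-adjoint for {\ITEMspectral} --- and thus has a classifying map to the corresponding space $\Fred$; I would set $\general(P)$ to be the pullback of the universal $G$-cover, namely the real determinant double cover of Section~\ref{ssec:Determinant}, the real Pfaffian double cover of Section~\ref{ssec:Pfaffian}, and the spectral $\Z$-cover of Section~\ref{ssec:spectral-cover}, respectively. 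Its $\Z_2$-grading is the locally constant function $\ind P \bmod 2$, $\dim_\R\Ker P \bmod 2$ (locally constant for real skew-adjoint families), respectively $0$, which is exactly \eqref{eqn:degree}.

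To see that $\general(p):=\general(P)$ is independent of the lift, I would invoke that all three universal covers are deformation invariant: a path in $\mathrm{Op}(p)$ induces a canonical isomorphism between the pullback covers at its endpoints, and since $\mathrm{Op}(p)$ is contractible these isomorphisms are path-independent and compose correctly, so the $\general(P)$ form a system of canonical isomorphisms with no nontrivial automorphisms; choosing a representative (or passing to the limit) gives a well-defined $\general(p)$. The same argument handles independence of the chosen metrics, Sobolev exponents, and the order-reduction operator, since those choices also range over contractible spaces.

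With $\general(p)$ in hand, properties~(i)--(v) follow by transporting the corresponding facts about Fredholm operators through this dictionary. For~(i), an identification $\Phi\colon p_-\to p_+$ over a diffeomorphism $\phi$ conjugates a lift $P_-$ of $p_-$ into a lift of $p_+$ --- pull sections back along $\phi$, apply $(\Phi^0)^{-1}$, then $P_-$, then $\Phi^1$, and push forward along $\phi^{-1}$ --- giving an isomorphism of Fredholm families, which induces $\Phi^*\colon\general(p_+)\to\general(p_-)$; contractibility of $\mathrm{Op}$ removes the dependence on $P_-$, and $(\Phi\circ\Psi)^*=\Psi^*\circ\Phi^*$, $\id^*=\id$ are immediate. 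For~(ii), $P\oplus Q$ is a lift of $p\oplus q$, and the direct-sum isomorphism is the standard identification of the orientation torsor of a direct sum of Fredholm operators with the graded tensor product of the two --- for {\ITEMorient} the canonical iso $\orient\bigl(\Ker(P\oplus Q)\oplus(\Coker(P\oplus Q))^*\bigr)\cong \orient(\Ker P\oplus(\Coker P)^*)\ot\orient(\Ker Q\oplus(\Coker Q)^*)$, and analogously for the Pfaffian and spectral covers --- with associativity, graded commutativity (Koszul sign from the $\Z_2$-grading) and naturality for~(i) inherited from tensor products of $\Z_2$-graded $G$-torsors. For~(iii), $-P^\dagger$ is a lift (of the same order) of $-p^\dagger$, and the identities $\Ker(-P^\dagger)=\Coker P$, $\Coker(-P^\dagger)=\Ker P$ give the duality isomorphism $\general(-p^\dagger)\to\general(p)^*$, compatibly with~(ii); the sign in $-p^\dagger$ is needed only to match the skew-adjoint normalization $\Phi^0=-\Phi^1$, and the precise identification uses the conventions of Section~\ref{ssec:sign-conventions}. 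For~(iv), a lift of $i^*p$ compactly supported in $U$ extends, by the trivialization outside its support supplied by Appendix~\ref{sec-symbols-and-operators}, to a lift of $p$ on $X$ agreeing with it near the support and invertible elsewhere; the induced inclusion of kernels and cokernels is an isomorphism and defines $i_!$, with $i_!j_!=(ij)_!$, $\id_!=\id$ clear and compatibility with~(i)--(iii) following at the Fredholm level. For~(v), $P\mapsto P(P^\dagger P)^{-1/2}$ sends a lift of $p$ to a lift of $p(p^\dagger p)^{-1/2}$ and differs from it by composition with the invertible family $(P^\dagger P)^{-1/2}$, which leaves kernels, cokernels and all orientation data unchanged; for order $m\neq 0$ one instead composes with the fixed order-reduction operator introduced above.

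The main obstacle is not any single one of (i)--(v) but the coherence bookkeeping: one must check that the isomorphisms manufactured from contractibility are mutually compatible --- naturality of the direct-sum isomorphism under functoriality, compatibility of adjoints with direct sums and pushforwards, commuting of order reduction with everything --- and this ultimately rests on the two genuinely nontrivial inputs, namely the contractibility of $\mathrm{Op}(p)$ and of the spaces of lower-order (compactly supported) $\Psi$DOs together with extension-by-trivialization, all from Appendix~\ref{sec-symbols-and-operators}, and the deformation invariance and multiplicativity of the three universal covers, from Sections~\ref{ssec:Determinant}, \ref{ssec:Pfaffian} and~\ref{ssec:spectral-cover}.
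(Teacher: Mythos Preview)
Your overall strategy coincides with the paper's: define $\general(p)$ as the limit of $\general(P)$ over the convex (hence contractible) set of pseudo-differential lifts $P$ of $p$ of the correct type, using deformation invariance of the determinant, Pfaffian, and spectral covers to identify the $\general(P)$ canonically; then transport properties (i)--(iii) from the Fredholm-operator level (Propositions~\ref{prop:properties-det}, \ref{prop:properties-pfaff}, \ref{prop:properties-spectral}), and handle (iv) by extension-by-zero of a compactly supported lift. This is exactly what the paper does in Section~\ref{ssec:proof-calc}.

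There is, however, a genuine slip in your treatment of~(v). You write that $P\mapsto P(P^\dagger P)^{-1/2}$ ``differs from [$P$] by composition with the invertible family $(P^\dagger P)^{-1/2}$''. But $P^\dagger P$ is only positive \emph{semi}-definite: its kernel equals $\Ker P$, which is generally nonzero, so $(P^\dagger P)^{-1/2}$ is not even a bounded operator, let alone an invertible one. The paper repairs this by working instead with $(1+P^\dagger P)^{-1/2}$, which is genuinely positive definite and has the same principal symbol $(p^\dagger p)^{-1/2}$ (for $m>0$ the added identity is lower order). One then chooses a $\PsiDO$ $Q$ with $\sigma(Q)=(p^\dagger p)^{-1/2}$ differing from $(1+P^\dagger P)^{-1/2}$ by a smoothing operator $S$, and deforms along the straight line $P\circ(Q+tS)$ from the $\PsiDO$ lift $PQ$ of $p(p^\dagger p)^{-1/2}$ to $P(1+P^\dagger P)^{-1/2}$, which manifestly has the same kernel and cokernel as $P$. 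A related caution: your suggestion to handle order $m\neq 0$ by pre-composing with a ``fixed invertible elliptic $\PsiDO$ of order $m$'' will in general destroy the self-adjoint or skew-adjoint type, so it cannot serve as the order-reduction mechanism for {\ITEMpfaff} and {\ITEMspectral}; the paper instead works directly with order-$m$ lifts on compact $X$ and uses the construction above only to produce the isomorphism~\eqref{eqn:symbols-reduction-of-order}.
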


Of course, all isomorphisms are understood as grading-preserving
isomorphisms of $G$\nobreakdash-principal bundles. The tensor product in \eqref{eqn:symbols-direct-sums}
is the structure group reduction of the direct product $G\times G$\nobreakdash-principal bundle along the group operation map $G \times G \to G$,
while the dual in \eqref{eqn:symbols-adjoints} is obtained by structure group reduction along
inversion $G\to G$.

\begin{cor}\label{cor:deformations}
Let $Z$ be compact Hausdorff.
For $p \in \Ell^m_{Y\times Z}(X)$ of type {\ITEMgeneral},
compactly supported if $X$ is not compact, the disjoint union
$\bigsqcup_{z\in Z} \general(p_z)$ has a unique natural topology
of $G$\nobreakdash-principal bundle over $Y\times Z$ extending that of Theorem~\textup{\ref{thm:symbol-calculus}}.
In particular, for $Z=[0,1]$ we get a fiber transport isomorphism
\e\label{eqn:symbol-deformations}
	\general\{p_z\}_{z \in [0,1]}\colon \general(p_0) \longrightarrow \general(p_1),
\e
compatible with \eqref{eqn:maps-induced-by-identification}--\eqref{eqn:symbols-reduction-of-order}.
\end{cor}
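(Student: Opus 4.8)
The plan is to apply Theorem~\ref{thm:symbol-calculus} verbatim with the topological space $Y\times Z$ playing the role of the parameter space, and then to read off both assertions from the naturality of that construction under base change.

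First, the fibre of $\general$ over a point is a $G$\nobreakdash-torsor depending only on a single elliptic symbol over $X$ carrying no parameters, so the set underlying $\bigsqcup_{z\in Z}\general(p_z)$ is exactly the set underlying $\general(p)$ for $p\in\Ell^m_{Y\times Z}(X)$. The hypotheses of Theorem~\ref{thm:symbol-calculus} hold for this $Y\times Z$\nobreakdash-family --- $Y\times Z$ is a topological space, and when $X$ is noncompact the $Z$\nobreakdash-varying compact support allowed in the relevant definitions is precisely the input that makes the finite-dimensional reduction of Section~\ref{ssec:proof-calc} applicable. So the set above becomes a $\Z_2$\nobreakdash-graded $G$\nobreakdash-principal bundle over $Y\times Z$; this is the claimed topology, and its uniqueness is just the statement that the construction of Theorem~\ref{thm:symbol-calculus} is canonical, independent up to canonical isomorphism of the auxiliary finite-dimensional reductions that pin it down.

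Next I would record that the construction of $\general$ in Section~\ref{ssec:proof-calc} is local over the base and is built by pulling back finite-rank objects --- determinant and Pfaffian line bundles, and spectral covers --- hence is natural under base change: for any continuous $f\colon Y'\to Y\times Z$ there is a canonical isomorphism $\general(f^*p)\cong f^*\general(p)$ of $\Z_2$\nobreakdash-graded $G$\nobreakdash-bundles, compatible with composition in $f$ and with each of \eqref{eqn:maps-induced-by-identification}--\eqref{eqn:symbols-reduction-of-order}. Applying this to the slice inclusion $\iota_z\colon Y\hookrightarrow Y\times Z$, along which $p$ pulls back to $p_z$, gives $\general(p)|_{Y\times\{z\}}\cong\general(p_z)$; thus the bundle over $Y\times Z$ restricts on each slice to the $G$\nobreakdash-bundle of Theorem~\ref{thm:symbol-calculus}, i.e.\ it extends it.

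Finally, for $Z=[0,1]$ the bundle $\general(p)\to Y\times[0,1]$ carries canonical fibre transport along the interval direction, because any two trivialisations of a principal bundle over $[0,1]$ with discrete structure group differ by a constant $G$\nobreakdash-valued function and so induce the same identification of the two endpoint fibres. This gives a grading-preserving isomorphism $\general(p_0)\cong\general(p_1)$ depending only on the family $\{p_z\}$, which is \eqref{eqn:symbol-deformations}; its compatibility with \eqref{eqn:maps-induced-by-identification}--\eqref{eqn:symbols-reduction-of-order} follows because each of those isomorphisms, formed for the $Y\times[0,1]$\nobreakdash-family and then restricted to the slices over $0$ and $1$, reproduces the corresponding isomorphism for $p_0$ and for $p_1$ and hence intertwines the two transports. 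The step I expect to be the main obstacle is verifying that the compactly-supported finite-dimensional-reduction machinery of Section~\ref{ssec:proof-calc} genuinely goes through with a $Z$\nobreakdash-varying support --- one must be able to choose the reductions locally in $Y\times Z$ and keep the norm bounds \eqref{dfn.symb-compactly-supp-bounds} uniform over compact sets of the $z$\nobreakdash-parameter --- which is exactly why $Z$ is taken compact Hausdorff; granting that, everything else is naturality bookkeeping.
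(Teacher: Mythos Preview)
Your approach is the same as the paper's: apply Theorem~\ref{thm:symbol-calculus} with $Y\times Z$ as the parameter space, then read off fibre transport for $Z=[0,1]$. The one point you flag as ``the main obstacle'' but do not resolve is handled concisely in the paper: when $X$ is noncompact and the support $L_z\subset X$ varies with $z$, one cannot directly invoke Theorem~\ref{thm:symbol-calculus} (which requires a fixed compact support), so instead one uses compactness of $Z$ to find, around each $z_0$, a neighbourhood $Z_0$ with $\bigcup_{z\in Z_0}L_z\subset\bigcap_{z\in Z_0}U_z$, reducing locally to the constant-support case; these local bundles then agree on overlaps by the canonicity you describe.
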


\begin{proof}
If $X$ is compact, this follows by applying Theorem~\ref{thm:symbol-calculus} to $Y\times Z$.
Similarly for $m=0$ and $p$ compactly supported in $L=K\times Z$ with $K\subset X$ compact.
The general case can be reduced to this, since every $z_0 \in Z$ has a neighborhood $Z_0 \subset Z$ such that
$\bigcup_{z\in Z_0} L_z \subset \bigcap_{z\in Z_0} U_z$.
The restriction of $p$ to a $Y\times Z_0$\nobreakdash-family is then of the previous type.
\end{proof}

\subsubsection{Excision}

\begin{thm}\label{main-theorem}
Let $X_\pm$ be compact manifolds and $Y$ a topological space.
Every isomorphism $(\phi,\Pi,\Kappa)$ of relative pairs  of type {\ITEMgeneral}
as in \eqref{eqn:concentrated-symb-iso}
induces an \emph{excision isomorphism} of $\Z_2$\nobreakdash-graded $G$\nobreakdash-principal bundles over $Y$,
\e\label{eqn:excision-iso}
\begin{aligned}
\Ex(\phi,\Pi,\Kappa)\colon
	\general (p_-)^* \otimes \general (q_-)
	\longrightarrow
	\general (p_+)^* \otimes \general(q_+),
\end{aligned}
\e
uniquely determined by the following properties:
\begin{enumerate}
\item[\textup{(i)}] \textup{(Empty support.)}~If $(\phi,\Pi,\Kappa)$ has empty support, then 
$\general (p_\pm)^* \otimes \general (q_\pm)$ are both
identified with the trivial cover. In this identification the excision isomorphism
$\Ex(\phi,\Pi,\Kappa)$ becomes the identity map.
\item[\textup{(ii)}] \textup{(Restriction.)}~Let $V_\pm \supset U_\pm$ be open supersets.
Assume that $(\phi,\Pi,\Kappa)$
has an extension $(\hat\phi, \hat\Xi, \hat\Kappa)$
to an isomorphism of relative pairs over a base diffeomorphism
$\hat\phi \colon V_- \to V_+$ of the same type {\ITEMgeneral}. Then
\[
	\Ex(\phi,\Pi, \Kappa) = \Ex(\hat\phi,\hat\Pi, \hat\Kappa).
\]
\item[\textup{(iii)}] \textup{(Sums.)}~In addition to $(\phi,\Pi,\Kappa)$ let
$\xymatrix@C=11ex@1{\ar[r]|{(\phi,\widetilde\Pi,\widetilde\Kappa)}(\widetilde{p}_-,\widetilde\Xi_-,\widetilde{q}_-) & (\widetilde{p}_+,\widetilde\Xi_+,\widetilde{q}_+)}$
be another isomorphism of relative pairs over the same $\phi$ and type {\ITEMgeneral}.
For the sum of symbols and identifications we have a commutative diagram
\[\xymatrix{
\general (p_-)^*\otimes \general (q_-) \otimes \general (\widetilde{p}_-)^* \otimes \general (\widetilde{q}_-)
\ar[d]|{\Ex(\phi,\Pi,\Kappa) \otimes \Ex(\phi,\widetilde\Pi,\widetilde\Kappa)}
\ar[r]_-{\eqref{eqn:direct-sums-horiz-maps}}
&
\general (p_- \oplus \widetilde{p}_-)^* \otimes \general(q_- \oplus \widetilde{q}_-)
\ar[d]|{\Ex(\phi,\Pi\oplus\widetilde\Pi, \Kappa\oplus\widetilde\Kappa)}
\\
\general (p_+)^*\otimes \general (q_+) \otimes \general (\widetilde{p}_+)^* \otimes \general (\widetilde{q}_+)\ar[r]^-{\eqref{eqn:direct-sums-horiz-maps}}
&
\general (p_+ \oplus \widetilde{p}_+)^* \otimes \general(q_+ \oplus \widetilde{q}_+).\\
}\]
\item[\textup{(iv)}] \textup{(Functoriality.)}~Let $X_-, X_0, X_+$ be compact manifolds. Given composable isomorphisms of relative pairs of the same type {\ITEMgeneral}
\[
\xymatrix@C=18ex{
(p_-,\Xi_-, q_-) \ar[r]|-{(\phi_{-0}, \Pi_{-0}, \Kappa_{-0})}& (p_0, \Xi_0, q_0)
\ar[r]|-{(\phi_{0+}, \Pi_{0+}, \Kappa_{0+})}& (p_+, \Xi_+, q_+)
}
\]
we have for the horizontal composition \textup($\phi_{-+} \coloneqq \phi_{0+}\circ\phi_{-0}$ and so forth\textup)
\[
	\Ex(\phi_{0+}, \Pi_{0+}, \Kappa_{0+})
	\circ
	\Ex(\phi_{-0}, \Pi_{-0}, \Kappa_{-0})
	=
	\Ex(\phi_{-+}, \Pi_{-+}, \Kappa_{-+}).
\]
\item[\textup{(v)}] \textup{(Global excision.)}~If the isomorphism $(\phi,\Pi,\Kappa)$
of relative pairs is defined
over a global diffeomorphism ${\phi\colon X_- \to X_+}$, $U_\pm = X_\pm$,
then excision coincides with the global functoriality
${\Ex(\phi,\Pi,\Kappa)=
(\Pi^*)^* \otimes (\Kappa^*)^{-1}}$
of \eqref{eqn:maps-induced-by-identification}.
\item[\textup{(vi)}] \textup{(Deformations.)}~Consider
a $Z$\nobreakdash-deformation \eqref{eqn:Zdeformation}. Then \eqref{eqn:excision-iso} 
is continuous in the topology of Corollary~\textup{\ref{cor:deformations}}.
In particular, when $Z=[0,1]$ and both relative pairs $(p_{\pm,z},\Xi_{\pm,z}, q_{\pm,z})$
are constant in $z$ we get a homotopy
\[
	\Ex(\phi_z,\Pi_z,\Kappa_z)\colon
	\general (p_-)^* \otimes \general (q_-)
\longrightarrow	\general (p_+)^* \otimes \general(q_+),\qquad z\in [0,1].
\]
\item[\textup{(vii)}] \textup{(Reduction of order.)}~Regarding
$(\phi,\Pi,\Kappa)$ also as an isomorphism of the zeroth-order relative symbols,
we have a commutative diagram
\[\xymatrix{
\general(p_-)^*\otimes \general(q_-)\ar[r]_-{\eqref{eqn:symbols-reduction-of-order}}
\ar[d]_{\Ex(\phi,\Pi,\Kappa)}
&
\general\left(p_-(p_-^\dagger p_-)^{-1/2}\right)^*
\otimes
\general\left(q_-(q_-^\dagger q_-)^{-1/2}\right)
\ar[d]^{\Ex(\phi,\Pi,\Kappa)}
\\
\general(p_+)^*\otimes \general(q_+)\ar[r]^-{\eqref{eqn:symbols-reduction-of-order}}
&
\general\left(p_+(p_+^\dagger p_+)^{-1/2}\right)^*
\otimes
\general\left(q_+(q_+^\dagger q_+)^{-1/2}\right).
}\]
\end{enumerate}
If the involved symbols are zeroth-order and compactly supported, everything holds
equally for non-compact manifolds. 
\end{thm}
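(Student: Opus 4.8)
The plan is to reduce everything to the global functoriality and pushforward isomorphisms of Theorem~\ref{thm:symbol-calculus}, using the relative data $\Xi_\pm$ to ``compress'' the orientation covers onto a neighborhood of the supports. First I would use Theorem~\ref{thm:symbol-calculus}(v) to reduce to the zeroth-order case: apply \eqref{eqn:symbols-reduction-of-order} to replace $p_\pm, q_\pm$ by $p_\pm(p_\pm^\dagger p_\pm)^{-1/2}$ etc., noting that $\Xi_\pm$ and $\Pi,\Kappa$ remain identifications of the same type by Proposition~\ref{prop:red-of-order}. Property (vii) is then essentially the assertion that this reduction is compatible with whatever we build, so it will hold by construction. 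From now on all symbols are zeroth-order; on the compact $X_\pm$ we may nonetheless regard them as compactly supported (with support all of $X_\pm$), so the compact and non-compact cases are treated uniformly, which handles the final sentence of the theorem.

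The core construction: given the relative pair $(p,\Xi,q)$ with support $L$, the identification $\Xi$ over $X\setminus L$ lets us glue $p$ on a neighborhood of $L$ to $q$ outside, producing for any open $U\supset L$ a symbol on $U$ — concretely, $\general(p)^*\otimes\general(q)$ is canonically trivialized away from $L$ via $\Xi$ (using (iii) to interpret $\general(-q^\dagger)\cong\general(q)^*$ is not needed; rather $\Xi^*$ directly identifies $\general(p|_{X\setminus L})$ with $\general(q|_{X\setminus L})$), hence by the pushforward (iv) and a Mayer--Vietoris/excision-for-covers argument the bundle $\general(p)^*\otimes\general(q)\to Y$ is canonically isomorphic to the corresponding bundle built from the restriction to any open $U\supset L$. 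Call this the \emph{localization isomorphism} $\ell_U$. Then $\Ex(\phi,\Pi,\Kappa)$ is \emph{defined} as the composite $\ell_{U_+}^{-1}\circ(\Pi,\Kappa)_*\circ\ell_{U_-}$, where $(\Pi,\Kappa)_* = (\Pi^*)^*\otimes(\Kappa^*)^{-1}$ is the functoriality isomorphism over $\phi$ between the localized bundles on $U_-$ and $U_+$ (this makes sense precisely because $\Pi,\Kappa$ are only defined over the $U_\pm$, and the compatibility $\Xi_+\circ\Pi = \Kappa\circ\Xi_-$ outside $C$ guarantees the functoriality map respects the trivializations away from $C$, so it descends through the localizations).

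With this definition, properties (i)--(vii) are verified in turn. Empty support (i): then $L_\pm=\emptyset$, so $\ell_{U_\pm}$ lands in the $\Xi_\pm$-trivialization and the functoriality map between trivial bundles is the identity. Restriction (ii): immediate from $\ell_{V}\circ\ell_{U}^{-1}$ compatibility, i.e.\ transitivity of localization, combined with naturality of functoriality under further restriction. Sums (iii): follows from Theorem~\ref{thm:symbol-calculus}(ii) naturality of $\oplus$ under functoriality and pushforward, plus the fact that $\Xi_-\oplus\widetilde\Xi_-$ trivializes the tensor product of the two covers. Functoriality (iv): the key point is that the localizations compose — $\ell_{U_0}$ can be inserted in the middle — after shrinking $U_0$ so that it contains $L_0$ and $\phi_{-0}(C_{-0})$ and is contained in $\phi_{-0}(U_-)\cap\phi_{0+}^{-1}(U_+)$; then functoriality of $(\Pi,\Kappa)_*$ (from (i) of Theorem~\ref{thm:symbol-calculus}) and of $i_!$ (from (iv)) gives the claim. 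Global excision (v): if $U_\pm=X_\pm$ then $\ell_{U_\pm}=\id$ and the definition reduces literally to $(\Pi^*)^*\otimes(\Kappa^*)^{-1}$. Deformations (vi): all the ingredients — functoriality, pushforward, reduction of order, and the $\Xi$-trivialization — are continuous in the $Z$-parameter by Corollary~\ref{cor:deformations}, and the constraint $\bigcup_{z\in Z_0}L_z\subset\bigcap_{z\in Z_0}U_z$ used there lets us choose $U_\pm$ locally constant in $z$, so $\ell_{U_\pm}$ varies continuously.

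Uniqueness: any isomorphism satisfying (i), (ii), (v) is forced — given $(\phi,\Pi,\Kappa)$, enlarge supports so $L_-=C$, $L_+=\phi(C)$; by (v) applied after extending trivially outside $U_\pm$ one is reduced to the global case, but more directly (ii) lets us pass between any two choices of $U_\pm$, (i) pins down the value on the complement of the support, and the gluing axiom for $G$-bundles (a $G$-bundle on $Y$ is determined by its restrictions to an open cover with the transition data) forces the map to equal our $\Ex$. The main obstacle I anticipate is making the ``localization isomorphism'' $\ell_U$ genuinely canonical and transitive: this requires an excision statement \emph{for the $G$-bundles themselves} — namely that $\general$ of a compactly supported zeroth-order symbol depends only on the germ near the support — which is not one of the itemized properties of Theorem~\ref{thm:symbol-calculus} but should follow from the pushforward isomorphism (iv) together with the observation that a symbol which is globally an isomorphism (support $=\emptyset$) has canonically trivial $\general$. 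Pinning down that this ``excision for covers'' is well-defined, associative, and natural for all of (i)--(vii) simultaneously is the technical heart of the argument; everything else is bookkeeping with the formal properties already established.
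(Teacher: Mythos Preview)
Your overall architecture---reduce to order zero, ``localize'' the pair $(p,\Xi,q)$ into $U$, then transport along $(\phi,\Pi,\Kappa)$ using the pushforward and functoriality of Theorem~\ref{thm:symbol-calculus}---is exactly the shape of the paper's proof, and your verification of (i)--(vii) from that definition is on the right track.

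The genuine gap is the construction of your localization map $\ell_U$. As written it does not typecheck against Theorem~\ref{thm:symbol-calculus}: the functor $\general$ is only defined for symbols that are \emph{compactly supported} when the manifold is non-compact, so expressions like $\general(p|_{X\setminus L})$ or $\general(p|_U)$ are meaningless---the restrictions $p|_U$, $q|_U$ are elliptic everywhere on $U$ and have no compact support. Consequently the sentence ``$\Xi^*$ directly identifies $\general(p|_{X\setminus L})$ with $\general(q|_{X\setminus L})$'' and the ensuing ``Mayer--Vietoris/excision-for-covers'' step have no content: there is nothing to patch. Nor can you ``glue $p$ near $L$ to $q$ outside'' to get something compactly supported, since both $p$ and $q$ are invertible at every cotangent direction.

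What the paper does to realize your $\ell_U$ is to work with the single symbol $-p^\dagger\oplus q$ (so that $\general(p)^*\otimes\general(q)\cong\general(-p^\dagger\oplus q)$ via \eqref{eqn:symbols-direct-sums} and \eqref{eqn:symbols-adjoints}) and then \emph{explicitly deform} it, using a cutoff $\chi\in C^\infty_{\mathrm{cpt}}(U)$ with $\chi|_L\equiv 1$, through the family
\[
(p,\Xi,q)_\chi^t=\begin{pmatrix}
-(1-t+t\chi)\,p^\dagger & t(1-\chi)\,\pi^*(\Xi^0)^\dagger\\
t(1-\chi)\,\pi^*\Xi^1 & (1-t+t\chi)\,q
\end{pmatrix},\qquad t\in[0,1],
\]
to a symbol that at $t=1$ is genuinely compactly supported in $U$. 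The only nontrivial check is that this matrix stays elliptic for all $t$; this is where $\Xi$ being an identification (so that the off-diagonal block is invertible and the diagonal block is, up to $\Xi$, skew-adjoint) is used. Your ``compression'' $\ell_U$ is then the composite of \eqref{eqn:symbols-direct-sums}, \eqref{eqn:symbols-adjoints}, and the fiber transport \eqref{eqn:symbol-deformations} along this path, and independence of $\chi$ (hence transitivity under shrinking $U$) comes from convexity of the space of cutoffs. With this in place, your diagram $\ell_{U_+}^{-1}\circ(\Pi,\Kappa)_*\circ\ell_{U_-}$ is precisely the paper's definition, and your sketched verifications of (i)--(vii) go through.
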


\begin{rem}
Our sign convention in Section~\textup{\ref{ssec:sign-conventions}}
fixes the map in (iii) as
\e\label{eqn:direct-sums-horiz-maps}
\hskip-2ex\begin{aligned}
&\general (p)^*\otimes \general (q) \otimes \general (\widetilde{p})^* \otimes \general (\widetilde{q})
\xrightarrow{1\otimes \eqref{eqn:tau}\otimes 1}\general (p)^*\otimes \general (\widetilde{p})^* \otimes\general (q) \otimes \general (\widetilde{q})\\
&\xrightarrow{\eqref{L1L2dual}\otimes 1}(\general \widetilde{p}\otimes \general p)^* \otimes\general (q) \otimes \general (\widetilde{q})
\xrightarrow{\eqref{eqn:tau}^*\otimes 1\otimes 1}
(\general p\otimes \general \widetilde{p})^* \otimes\general (q) \otimes \general (\widetilde{q})\\
&\xrightarrow{\eqref{eqn:symbols-direct-sums}^{*,-1}\otimes\eqref{eqn:symbols-direct-sums}}\general(p\oplus \widetilde{p})^* \otimes \general(q\oplus\widetilde{q}).
\end{aligned}
\e
Thus, for $\general=\orient$ the diagram in (iii) includes
$
	(-1)^{(\ind p_\pm - \ind q_\pm)\cdot (\ind\widetilde{p}_- + \ind\widetilde{p}_+)}
$ as compared to na\"ive rearrangement. Similarly for $\general=\pfaff$ with
the index replaced by the dimension of the kernel modulo two. There is no sign
for $\general=\spectral$.
\end{rem}

\begin{rem}
Applying \eqref{eqn:excision-iso} for $\general=\spectral$ and $Y=S^1$
we deduce on the level of isomorphism classes an excision
formula for the spectral flow around a loop of self-adjoint
elliptic pseudo-differential operators on $X$:
\e\label{eqn:spectral-flow-cor}
	\SF\{Q^+_t\}_{t\in S^1}
	-\SF\{P^+_t\}_{t\in S^1}
	=
	\SF\{Q^-_t\}_{t\in S^1}
	-\SF\{P^-_t\}_{t\in S^1}
\e
For first order elliptic differential operators this is already known also a consequence of
the Atiyah--Patodi--Singer index theorem \cite[Th.~3.10]{AtPaSi1} which allows
one to express the spectral flow of a periodic family as an index on $X\times S^1$,
see \cite[p.~95]{AtPaSi3}. This depends on a detailed analysis of elliptic boundary
value problems using heat kernels. Once the spectral flow has been expressed
as an index, one can apply the classical excision formula \eqref{classical-excision} to get \eqref{eqn:spectral-flow-cor}.
Although this direct proof for \eqref{eqn:spectral-flow-cor} is compelling, it does not
strictly require the categorical point of view. A key point is that \eqref{eqn:excision-iso}
includes a generalization of \eqref{eqn:spectral-flow-cor} for non-periodic families that is
not expressable in terms of spectral flow.
\end{rem}

\subsubsection{Specialization for gauge theory}

For the convenience of the reader we now formulate Theorem~\ref{main-theorem}
in the special case relevant to gauge theory.

Let $G$ be a Lie group, $X$ a compact manifold, $P \to X$ a principal
$G$\nobreakdash-bundle, and $\Ad(P) \to X$ the associated bundle of Lie algebras.
Let
\[
	D\colon C^\infty(X,E^0) \longrightarrow C^\infty(X,E^1)
\]
be an elliptic differential operator on $X$, which is also denoted $E^\bu = (E^0, E^1, D)$.
Given fixed connections on $E^0, E^1, T^*X$, we can
use connections $\nabla^P$ on $P$ to define as in \cite[Def.~1.2]{JTU} the
\emph{$\nabla^P$\nobreakdash-twisted differential operator}
\[
D^{\nabla_{\Ad P}}\colon C^\infty\left(X,E^0\otimes \Ad(P)\right) \longrightarrow C^\infty\left(X,E^1\otimes \Ad(P)\right).
\]
Let $O^{E^\bu}_P = \orient(D^{\nabla_{\Ad P}})$ be its set of orientations.
More generally, a $Y$\nobreakdash-family of connections determines a double cover of $Y$.
Using
the trivial bundle $\underline{G}$ we define the \emph{normalized
orientation torsor} $\check{O}^{E^\bu}_P \coloneqq (O^{E^\bu}_P)^* \otimes_{\Z_2} O^{E^\bu}_{\underline{G}}$.

\begin{thm}\label{thm:gauge-special}
Let $X_\pm$ be compact manifolds. The data consisting of
\begin{enumerate}
\item[\textup{(a)}]
open covers $X_\pm = U_\pm \cup V_\pm$,
\item[\textup{(b)}]
principal $G$\nobreakdash-bundles $P_\pm \to X_\pm$ and
$G$\nobreakdash-frames $\tau_\pm$ of $P_\pm|_{V_\pm}$ over $V_\pm$
\item[\textup{(c)}]
elliptic operators $E^\bu_\pm = (E^0_\pm, E^1_\pm, D_\pm)$
on $X_\pm$,
\item[\textup{(d)}]
bundle isomorphisms $\Phi^\bu \colon E_-^\bu|_{U_-} \to E_+^\bu|_{U_+}$ covering a
diffeomorphism ${\phi\colon U_- \to U_+}$ identifying $D_\pm$ in these sense that
\[
\forall s\in C_\mathrm{cpt}^\infty(U_-,E_-^0):\quad \Phi^1\circ D_-(s) = D_+(\Phi^0\circ s\circ \phi^{-1}) \circ \phi,
\]
\item[\textup{(e)}]
an isomorphism $\Psi\colon P_-|_{U_-} \to P_+|_{U_+}$ covering $\phi$ of principal $G$\nobreakdash-bundles
satisfying $\Psi\circ \tau_- = \tau_+\circ \phi$ outside a compact subset of $U_-\cap V_- \cap \phi^{-1}(V_+)$,
\end{enumerate}
induces a canonical excision isomorphism of normalized orientation torsors
\e\label{gauge-excision}
	\Ex_{-+} \colon \check{O}^{E^\bu_-}_{P_-}
	\longrightarrow
	\check{O}^{E^\bu_+}_{P_+}.
\e
These have the following properties:
\begin{enumerate}

\item[\textup{(i)}]\textup{(Empty set.)}~If $U_\pm = \emptyset$ then we have canonical identifications
$\check{O}^{E^\bu_\pm}_{P_\pm} = \Z_2$ under which
$\Ex_{-+}$ becomes $\id_{\Z_2}$.

\item[\textup{(ii)}]\textup{(Restriction.)}~Assume $(\phi, \Phi^\bu)$ can be extended over
open supersets $U_- \subset \widetilde{U}_-$ and $U_+ \subset \widetilde{U}_+$ to $(\widetilde{\phi},\widetilde{\Phi}^\bu)$. Then
$\Ex_{-+} = \widetilde{\Ex}_{-+}$.

\item[\textup{(iii)}]\textup{(Sums.)}~In addition to \textup{(a)}--\textup{(e)} let $H$ be a Lie group,
$Q_\pm \to X_\pm$ principal $H$\nobreakdash-bundles, $\rho_\pm$ frames of $Q_\pm|_{V_\pm}$, and $\Xi \colon Q_-|_{U_-} \to Q_+|_{U_+}$ an isomorphism of $H$\nobreakdash-bundles over $\phi$ with $\Xi \circ \rho_- = \rho_+ \circ \phi$ outside a compact subset of $U_-\cap V_- \cap \phi^{-1}(V_+)$.
Then we have a commutative diagram
\[\xymatrix{
\check{O}^{E^\bu_-}_{P_-}\otimes\check{O}^{E^\bu_-}_{Q_-}\ar[r]_-{\eqref{eqn:symbols-direct-sums}}\ar[d]_{\Ex_{-+}^P\otimes \Ex_{-+}^Q}
&
\check{O}^{E^\bu_-}_{P_-\times_{X_-} Q_-}\ar[d]^{\Ex_{-+}^{P\times Q}}\\
\check{O}^{E^\bu_+}_{P_+}\otimes\check{O}^{E^\bu_+}_{Q_+}\ar[r]^-{\eqref{eqn:symbols-direct-sums}}
&
\check{O}^{E^\bu_+}_{P_+\times_{X_+} Q_+}.
}\]
As compared to na\"ive rearrangement, this diagram includes the sign given by
the parity of ${\dim H \cdot (\ind D_+ - \ind D_-)\cdot (\ind D_+^{\nabla_{\Ad P_+}} - \ind D_+^{\nabla_{\Ad G}})}$.

\item[\textup{(iv)}]\textup{(Functoriality.)}~Given three sets of data
$X_{\pm0} = U_{\pm0} \cup V_{\pm0}$,
$P_{\pm0} \to X_{\pm0}$,
$E^\bu_{\pm0}$,
 $\tau_{\pm0}$ as above, diffeomorphisms
 $\phi_{-0}\colon U_- \to U_0$, $\phi_{0+}\colon U_0 \to U_+$
 that identify $D_-, D_0, D_+$,
 and $G$\nobreakdash-bundle isomorphisms $\Psi_{-0}, \Psi_{0+}$
we have
\[
	\Ex_{0+}\circ \Ex_{-0} = \Ex_{-+}.
\]
Both \textup{(ii)} and \textup{(iii)} are natural for this functoriality.

\item[\textup{(v)}]\textup{(Global excision.)}~If $\phi\colon X_- \to X_+$ is a global diffeomorphism, $U_\pm = X_\pm$, then excision coincides with the global functoriality defined by mapping the kernels of the
differential operators $D^{\nabla_{\Ad P_\pm}}$ and $D^{\nabla_{\Ad \underline{G}}}$ using $\phi$ and $\Phi^\bu$.
\item[\textup{(vi)}]\textup{(Families.)}~Let $Y$ be compact Hausdorff. Given a $Y$\nobreakdash-family of data
as above, where $U_\pm, V_\pm \subset X_\pm \times Y$ and all the other data
are allowed to change in $Y$, \eqref{gauge-excision} becomes a continuous map of coverings over $Y$.
\end{enumerate}
\end{thm}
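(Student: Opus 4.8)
The plan is to deduce Theorem~\ref{thm:gauge-special} from Theorem~\ref{main-theorem} by repackaging the gauge-theoretic data (a)--(e) as an isomorphism of relative pairs of type {\ITEMorient}, and then reading off the properties (i)--(vi) clause by clause from Theorem~\ref{main-theorem} together with the formal calculus of $\orient(\cdot)$ in Theorem~\ref{thm:symbol-calculus}. First I would fix Hermitian metrics and the auxiliary connections used to form $D^{\nabla_{\Ad P}}$, and let $p_\pm,q_\pm$ denote the principal symbols of $D_\pm^{\nabla_{\Ad P_\pm}}$ and $D_\pm^{\nabla_{\Ad\underline{G}}}$. Since twisting does not change the principal symbol, $p_\pm=\sigma(D_\pm)\otimes\id_{\Ad(P_\pm)}$ and $q_\pm=\sigma(D_\pm)\otimes\id_{\underline{\g}}$ are well defined independently of the connections, and deformation invariance of $\orient$ gives canonical identifications $\orient(p_\pm)\cong O^{E^\bu_\pm}_{P_\pm}$ and $\orient(q_\pm)\cong O^{E^\bu_\pm}_{\underline{G}}$, hence $\orient(p_\pm)^*\otimes\orient(q_\pm)\cong\check O^{E^\bu_\pm}_{P_\pm}$. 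The $G$-frame $\tau_\pm$ of $P_\pm|_{V_\pm}$ trivializes $\Ad(P_\pm)|_{V_\pm}$ and hence defines an identification $\Xi_\pm\colon p_\pm|_{V_\pm}\to q_\pm|_{V_\pm}$ over the identity; since $X_\pm$ is compact, $L_\pm\coloneqq X_\pm\setminus V_\pm\subset U_\pm$ is compact, so $(p_\pm,\Xi_\pm,q_\pm)$ is a relative $Y$-pair of type {\ITEMorient} with support $L_\pm$.

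Next I would assemble the isomorphism of relative pairs. The bundle maps $\Phi^\bu$ of (d), replaced if necessary by their unitary parts via fibrewise polar decomposition (which does not affect $\orient$), together with the $G$-bundle isomorphism $\Psi$ of (e), yield identifications $\Pi\coloneqq\Phi^\bu\otimes\Ad(\Psi)\colon p_-|_{U_-}\to p_+|_{U_+}$ and $\Kappa\coloneqq\Phi^\bu\otimes\id_{\underline{\g}}\colon q_-|_{U_-}\to q_+|_{U_+}$ over $\phi$, both of type {\ITEMorient}. Setting $C\coloneqq K\cup L_-\cup\phi^{-1}(L_+)$, where $K\subset U_-\cap V_-\cap\phi^{-1}(V_+)$ is the compact set of (e), one checks that $C$ is compact, $C\subset U_-$, $L_-\subset C$ and $L_+\subset\phi(C)$, and that on $U_-\setminus C$ all of $\Xi_-,\Xi_+,\Pi,\Kappa$ are defined and the hypothesis $\Psi\circ\tau_-=\tau_+\circ\phi$ of (e) becomes, after applying $\Ad(\cdot)$, exactly the relation $\Xi_+\circ\Pi=\Kappa\circ\Xi_-$. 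Thus $(\phi,\Pi,\Kappa)$ is an isomorphism of relative $Y$-pairs as in \eqref{eqn:concentrated-symb-iso} with support in $C$, and I would \emph{define} $\Ex_{-+}$ to be $\Ex(\phi,\Pi,\Kappa)$ of Theorem~\ref{main-theorem}, transported along $\orient(p_\pm)^*\otimes\orient(q_\pm)\cong\check O^{E^\bu_\pm}_{P_\pm}$.

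The properties (i)--(v) then translate term by term. Clause (i) follows from Theorem~\ref{main-theorem}(i): $U_\pm=\emptyset$ forces $V_\pm=X_\pm$, hence $L_\pm=\emptyset$ and the isomorphism has empty support, while the global frame identifies $\check O^{E^\bu_\pm}_{P_\pm}$ with $\Z_2$. Clause (ii) follows from Theorem~\ref{main-theorem}(ii), since extending $(\phi,\Phi^\bu,\Psi)$ over $\widetilde U_\pm$ --- keeping $V_\pm$, and hence the relative pairs, unchanged --- extends $(\phi,\Pi,\Kappa)$. Clause (iv), with the asserted naturality of (ii) and (iii), follows from Theorem~\ref{main-theorem}(iv) once one checks that composing the induced relative-pair isomorphisms reproduces the one induced by the composite data. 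Clause (v) follows from Theorem~\ref{main-theorem}(v) together with the naturality Theorem~\ref{thm:symbol-calculus}(i): for $U_\pm=X_\pm$ the maps $\Pi^*$, $\Kappa^*$ are precisely the maps on kernels and cokernels of $D^{\nabla_{\Ad P_\pm}}$ and $D^{\nabla_{\Ad\underline{G}}}$ induced by $\phi,\Phi^\bu$ (and $\Psi$ on the $\Ad P$-twist).

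For clause (iii), writing $p^P_\pm,p^Q_\pm,p^{P\times Q}_\pm$ for the principal symbols of $D_\pm^{\nabla_{\Ad P_\pm}},D_\pm^{\nabla_{\Ad Q_\pm}},D_\pm^{\nabla_{\Ad(P_\pm\times_{X_\pm}Q_\pm)}}$ and using $\Ad(P\times_X Q)=\Ad P\oplus\Ad Q$ with the direct-sum isomorphism \eqref{eqn:symbols-direct-sums}, one identifies $\orient(p^{P\times Q}_\pm)\cong\orient(p^P_\pm)\otimes\orient(p^Q_\pm)$, and likewise for $\underline{G}\times\underline{H}$; clause (iii) then follows from Theorem~\ref{main-theorem}(iii), and the stated sign is read off from the Remark following Theorem~\ref{main-theorem} once one knows the congruence $\ind D^{\nabla_{\Ad Q}}\equiv(\dim H)\,\ind D\pmod 2$. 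For clause (vi) I would take the family parameter of Theorem~\ref{main-theorem} to be a point and run the construction as a $Z$-deformation of relative pairs with $Z=Y$ (so now $U_\pm,V_\pm\subset X_\pm\times Y$ and $\phi$ is a $Y$-family of diffeomorphisms): Theorem~\ref{main-theorem}(vi) then endows the $\orient$-coverings with a topology over $Y$ in which $\Ex_{-+}$ is continuous, and by Corollary~\ref{cor:deformations} this is the covering of $Y$ determined by a $Y$-family of connections. For non-compact $X_\pm$ with zeroth-order compactly supported symbols the same argument applies verbatim by the last sentence of Theorem~\ref{main-theorem}. The only input beyond formal manipulation is the support bookkeeping of the second paragraph --- arranging that hypothesis (e) is exactly what makes $(\phi,\Pi,\Kappa)$ an isomorphism of relative pairs --- together with the mod-$2$ index congruence entering the sign in (iii).
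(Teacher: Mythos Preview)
Your proposal is correct and follows exactly the approach of the paper: apply Theorem~\ref{main-theorem} to the twisted principal symbols $p_\pm=\sigma(D_\pm)\otimes\id_{\Ad P_\pm}$ and $q_\pm=\sigma(D_\pm)\otimes\id_{\Ad\underline{G}}$, with the relative-pair structure coming from the frames $\tau_\pm$ and the isomorphism of relative pairs from $(\phi,\Phi^\bu,\Psi)$. The paper's own proof is a single sentence to this effect; you have simply unpacked the translation in detail and traced properties (i)--(vi) back to their counterparts in Theorem~\ref{main-theorem}, which is exactly what is intended.
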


\begin{proof}
This is Theorem~\ref{main-theorem} applied to
the twisted principal symbols $p_{\pm,\xi}=\sigma_\xi(D_\pm)\otimes \id_{\Ad P_\pm}$ and
$q_{\pm,\xi}=\sigma_\xi(D_\pm)\otimes \id_{\Ad G}$.
\end{proof}

\subsection{Proof of Theorem~\ref{main-theorem}}\label{ssec:proofMAIN}

Assuming the categorical index calculus of Theorem~\ref{thm:symbol-calculus}, we
shall perform a series of reductions until Theorem~\ref{main-theorem} reduces completely
to Theorem~\ref{thm:symbol-calculus}, verifying in each step that
the properties claimed in Theorem~\ref{main-theorem} are preserved.

\subsubsection{Reduction to zeroth order}

Recall from Proposition~\ref{prop:red-of-order}(ii) that an
identification $\Xi\colon p \to q$ induces an identification
of the zeroth-order symbols. Assume Theorem~\ref{main-theorem}
in the special case of zeroth-order families.
Let $(p_\pm, \Xi_\pm, q_\pm)$ be two relative $Y$\nobreakdash-pairs,
compactly supported in $L_\pm \subset U_\pm$ and let
$\xymatrix@C=10ex@1{\ar[r]|{(\phi,\Pi,\Kappa)}(p_-, \Xi_-, q_-)|_{U_-}&
(p_+, \Xi_+, q_+)|_{U_+}}$ be an isomorphism of type {\ITEMgeneral} over a
diffeomorphism $\phi \colon U_- \to U_+$.
Using the excision isomorphism for zeroth-order families, we define $\Ex(\phi,\Pi,\Kappa)$ in general as
\begin{align*}
\general (p_-)^* \otimes \general (q_-) &\xrightarrow{\eqref{eqn:symbols-reduction-of-order}^{*,-1} \otimes \eqref{eqn:symbols-reduction-of-order}}
\general (p_-(p_-^\dagger p_-)^{-1/2})^* \otimes \general (q_-(q_-^\dagger q_-)^{-1/2})\\
&\xrightarrow{\Ex(\phi,\Pi,\Kappa)} \general (p_+(p_+^\dagger p_+)^{-1/2})^* \otimes \general (q_+(q_+^\dagger q_+)^{-1/2})\\
&\xleftarrow{\eqref{eqn:symbols-reduction-of-order}^{*,-1} \otimes \eqref{eqn:symbols-reduction-of-order}}\general (p_+)^* \otimes \general (q_+).
\end{align*}
Then Theorem~\ref{main-theorem}(vii) holds by definition and properties (i)--(vi)
follow from the compatibilities stated in Theorem~\ref{thm:symbol-calculus}(v) and
the assumed properties for zeroth-order families.

\subsubsection{Deformation to compactly supported symbols}

The reduction to Theorem~\ref{thm:symbol-calculus}
is based on the following deformation:

\begin{prop}\label{prop:deformation-of-rel-pairs}
In the notation of Definition~\textup{\ref{dfn:relative-pair}},
let $(p, \Xi, q)$ be a relative $Y$\nobreakdash-pair of order zero with compact support $L$
over a manifold $X$. Let $U \subset X$ be an open set with $L\subset U$, and
pick ${\chi \in C_\mathrm{cpt}^\infty(U, [0,1])}$ with $\chi|_L \equiv 1$.
Then 
\[
(p,\Xi,q)_{\chi}^t = \begin{pmatrix}
-(1-t+t\chi) p^\dagger & t(1-\chi) \pi^*{\Xi^0}^\dagger\\
t(1-\chi) \pi^*\Xi^1 & (1-t+t\chi) q
\end{pmatrix}
\colon
	\pi^*(E^1 \oplus F^0) \longrightarrow \pi^*(E^0 \oplus F^1)
\]
for $t\in [0,1]$ has the following properties:
\begin{enumerate}
\item[\textup{(i)}]
For each $t \in [0,1]$, $(p,\Xi,q)_{\chi}^t$ is a zeroth-order family of elliptic symbols.
\item[\textup{(ii)}]
$(p,\Xi,q)_{\chi}^0 = -p^\dagger \oplus q$.
\item[\textup{(iii)}]
$(p,\Xi,q)_{\chi}^1$ has compactly support $\chi^{-1}(1) \subset U$.
\item[\textup{(iv)}]
When $p, q$ are skew-adjoint symbol families and $\Xi^0 = -\Xi^1$, all $(p,\Xi,q)_{\chi}^t$ are skew-adjoint. Similarly when $p,q$ are self-adjoint and $\Xi^0 = \Xi^1$.
\item[\textup{(v)}]
Let $\xymatrix@1@C=10ex{\ar[r]|{(\phi, \Pi, \Kappa)} (p_-,\Xi_-, q_-) & (p_+,\Xi_+, q_+)}$ be an isomorphism of relative pairs with support $L_\pm$ over the open embedding $\phi\colon U_- \to U_+$.
Let $\chi_\pm \in C^\infty_\mathrm{cpt}(U_\pm, [0,1])$ be cut-offs with $\chi_\pm|_{L_\pm}\equiv 1$.
Provided ${\chi_+ \circ \phi = \chi_-}$,
\e\label{eqn:compressed-identification}
	(\phi, \Pi^1\oplus\Kappa^0, \Pi^0\oplus \Kappa^1)\colon (p_-,\Xi_-,q_-)_{\chi_-}^t \longrightarrow (p_+,\Xi_+,q_+)_{\chi_+}^t
\e
is an identification over $\phi$ for each $t\in [0,1]$. This definition is clearly functorial for the composition of
relative pairs.
\end{enumerate}
\end{prop}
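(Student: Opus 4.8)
The plan is to verify each of the five assertions directly from the explicit block-matrix formula, the only subtle point being the ellipticity claim (i). First I would rewrite the symbol more suggestively: abbreviate $a = a_t(x) = 1 - t + t\chi(x) \in [0,1]$ and $b = b_t(x) = t(1-\chi(x)) \in [0,1]$, so that on the fibre over $(\xi, x, y)$ the symbol is
\[
(p,\Xi,q)^t_\chi
=
\begin{pmatrix}
-a\, p_{\xi}^\dagger & b\, \Xi^{0\dagger}\\
b\, \Xi^1 & a\, q_{\xi}
\end{pmatrix},
\]
where $\Xi^0, \Xi^1$ are the (constant-in-$\xi$) identification isomorphisms, defined and invertible wherever $\chi < 1$, which is exactly where $b \neq 0$; and where $p_\xi, q_\xi$ are invertible for $\xi \neq 0$. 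This makes clear that the formula is well defined: the term $b\Xi^{0\dagger}$ is nonzero only on the region $X \setminus L$ where $\Xi$ exists. The claim that it is order zero is immediate since $p_\xi^\dagger p_\xi$ has been divided out (we are in the zeroth-order setting, as the proposition states), and the homogeneity $\lambda^0$ is inherited from $p, q$.

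For (i), ellipticity, I would argue pointwise for fixed $\xi \neq 0$, $x$, $y$. The key observation is that the two "regions" overlap nicely: if $\chi(x) = 1$ then $b = 0$ and $a = 1$, so the symbol is the block-diagonal $-p_\xi^\dagger \oplus q_\xi$, visibly invertible; if $\chi(x) < 1$ then $b > 0$ whenever $t > 0$, and $\Xi^0, \Xi^1$ are available. The cleanest way to see invertibility in general is to compute $\left((p,\Xi,q)^t_\chi\right)^\dagger (p,\Xi,q)^t_\chi$ — or, better, to exploit that for a matrix of the shape $\left(\begin{smallmatrix} -A & B^\dagger \\ C & D\end{smallmatrix}\right)$ with $A, D$ invertible one has a Schur-complement criterion. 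Actually the slickest route: multiply on the right by $\left(\begin{smallmatrix} p_\xi^{-1} & 0 \\ 0 & \mathrm{id}\end{smallmatrix}\right)$ and on the left by $\left(\begin{smallmatrix} \mathrm{id} & 0 \\ 0 & \mathrm{id}\end{smallmatrix}\right)$, reducing to showing $\left(\begin{smallmatrix} -a\, \mathrm{id} & b\Xi^{0\dagger}\\ b\Xi^1 p_\xi^{-1} & a q_\xi\end{smallmatrix}\right)$ is invertible, where now the top-left block is a scalar multiple of the identity; then since $\Xi^1 p_\xi^{-1} = q_\xi^{-1} \Xi^0$ by the identification relation (using that $\Xi$ intertwines $p$ and $q$), the off-diagonal blocks are controlled and a direct Schur-complement computation shows the determinant-type obstruction is $a^2 + b^2 \cdot(\text{positive})$ on the overlap, hence never zero for $(a,b) \neq (0,0)$, which always holds. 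I expect this ellipticity verification to be the main obstacle — one has to be careful that when $a = 0$ (i.e. $t = 1$ and $\chi(x) = 0$) the symbol is still invertible purely via the $b$-blocks, which needs $\Xi^0, \Xi^1$ invertible there, and this is guaranteed by the compact-support bounds \eqref{dfn.symb-compactly-supp-bounds}.

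Items (ii) and (iii) are then immediate substitutions: at $t = 0$ we have $a \equiv 1$, $b \equiv 0$, giving the block-diagonal $-p^\dagger \oplus q$; at $t = 1$ we have $a = \chi$, $b = 1 - \chi$, so outside $\chi^{-1}(1)$ the symbol is independent of $\xi$ (the $p_\xi^\dagger$ and $q_\xi$ terms survive only through the scalar $\chi$, but wait — $a q_\xi$ still depends on $\xi$ unless $\chi = 0$); I would instead observe that at $t=1$ on the locus where $\chi$ is locally constant equal to some value $c < 1$ the symbol need not be $\xi$-independent, so "compact support in $\chi^{-1}(1)$" must be checked against the actual Definition of compact support — namely one exhibits $\widetilde p_{x,y}$ as the restriction of $(p,\Xi,q)^1_\chi$ to any $\xi$, which is legitimate precisely because on $\chi^{-1}(1)$ the symbol is $\xi$-independent (both blocks lose their $\xi$-dependence there), and the bounds \eqref{dfn.symb-compactly-supp-bounds} transfer from those for $\Xi^0, \Xi^1$. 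For (iv), skew-adjointness: when $p^\dagger = -p$ and $\Xi^0 = -\Xi^1$ one checks $\left((p,\Xi,q)^t_\chi\right)^\dagger = \left(\begin{smallmatrix} -a p & b \Xi^{1\dagger}\\ b\Xi^0 & a q^\dagger\end{smallmatrix}\right)$ and substitutes the relations to match $-(p,\Xi,q)^t_\chi$ — the sign $\Xi^0 = -\Xi^1$ is exactly what makes the off-diagonal terms work out; the self-adjoint case $\Xi^0 = \Xi^1$, $p^\dagger = p$ is parallel. Finally (v) is a formal check: writing out the intertwining condition for $(\phi, \Pi^1 \oplus \Kappa^0, \Pi^0 \oplus \Kappa^1)$ against $(p,\Xi,q)^t_{\chi_\pm}$ reduces, block by block, to the intertwining conditions defining $\Pi\colon p_- \to p_+$ and $\Kappa\colon q_- \to q_+$ together with the compatibility $\Xi_+ \Pi = \Kappa \Xi_-$ (valid outside the support) and the hypothesis $\chi_+ \circ \phi = \chi_-$ which is what lets the scalar functions $a, b$ pass through $\phi$; functoriality under composition of relative pairs is then visibly inherited from functoriality of the component identifications.
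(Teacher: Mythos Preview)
Your overall plan is right and most of the verifications are routine, as the paper also says. Two points deserve comment.

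\medskip

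\textbf{Ellipticity (i): a different, but workable, route.} Your Schur-complement idea is sound but the sketch is garbled: multiplying on the right by $\left(\begin{smallmatrix} p_\xi^{-1} & 0 \\ 0 & \id\end{smallmatrix}\right)$ does not type-check, since the matrix acts on $E^1\oplus F^0$ while $p_\xi^{-1}\colon E^1\to E^0$. If you instead take the Schur complement with respect to the invertible block $-a\,p_\xi^\dagger$ (assuming $a>0$), use the identification relation $q_\xi\Xi^0=\Xi^1 p_\xi$ together with unitarity of $\Xi^\bullet$, you find the complement is $\tfrac{1}{a}(q_\xi^\dagger)^{-1}\bigl(a^2 q_\xi^\dagger q_\xi + b^2\,\id\bigr)$, which is positive; the case $a=0$ is the off-diagonal isomorphism. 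So your approach can be made to work. The paper's argument is different and slicker: for $x\notin\chi^{-1}(1)$ it factors
\[
(p,\Xi,q)^t_\chi(\xi)
=
\begin{pmatrix}0&\Xi^{0\dagger}\\ \Xi^1&0\end{pmatrix}
\left[\,b\,\id + a\begin{pmatrix}0&\Xi^{1\dagger}q_\xi\\ -\Xi^0 p_\xi^\dagger&0\end{pmatrix}\right],
\]
observes (using $q_\xi\Xi^0=\Xi^1 p_\xi$ and unitarity) that the inner $2\times 2$ block is skew-adjoint and invertible, hence has purely imaginary nonzero spectrum, so $b\,\id$ plus $a\cdot(\text{skew-adjoint})$ is invertible as $a+b=1$. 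This avoids any case split or Schur computation.

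\medskip

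\textbf{Item (iii): your argument is confused.} You correctly notice that on $\{0<\chi<1\}$ the symbol at $t=1$ still carries $\xi$-dependence through $\chi\,p_\xi^\dagger$ and $\chi\,q_\xi$, but then you assert that ``on $\chi^{-1}(1)$ the symbol is $\xi$-independent,'' which is backwards (there it equals $-p_\xi^\dagger\oplus q_\xi$) and in any case does not establish compact support. The correct observation is that outside $\supp\chi$ one has $\chi=0$, so at $t=1$ the symbol reduces to the $\xi$-independent isomorphism $\left(\begin{smallmatrix}0&\Xi^{0\dagger}\\ \Xi^1&0\end{smallmatrix}\right)$; the bounds \eqref{dfn.symb-compactly-supp-bounds} then follow from unitarity of $\Xi^\bullet$. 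Thus the compact support is really contained in $\supp\chi\subset U$, not literally $\chi^{-1}(1)$ --- the paper's statement is slightly loose here, but for the application all that is needed is a compact subset of $U$, so this is harmless. Your remaining checks (ii), (iv), (v) are fine.
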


\begin{proof}
Note that $(p,\Xi,q)_{\chi}(t)$ is well-defined since $\chi|_L \equiv 1$ and $\Xi$ is defined
outside $L$. On $\chi^{-1}(1)$ we have $(p,\Xi,q)_{\chi}(t) = -p^\dagger \oplus q$ and (i) is clear.
To prove (i) for $x\notin \chi^{-1}(1)$, let $0\neq \xi \in T^*_xX$ and write
\[
{(p,\Xi,q)_{\chi}^t}(\xi)
=
\begin{pmatrix}
0& (\Xi^0)^\dagger\\
\Xi^1 & 0
\end{pmatrix}
\circ \left[
t(1-\chi) \id + 
(1-t+t\chi)\cdot
\begin{pmatrix}
0 & (\Xi^1)^\dagger q_\xi\\
-\Xi^0p_\xi^\dagger & 0
\end{pmatrix}
\right].
\]
Since by assumption $(-\Xi^0p_\xi^\dagger)^\dagger = - (\Xi^1)^{-1}q_\xi$, the latter summand is skew-adjoint and invertible. Hence all of its spectral values are non-zero
and purely imaginary. It follows that the endomorphism given by the inner square brackets does
not have zero in its spectrum.
The rest are trivial verifications.
\end{proof}

This deformation $(p,\Xi,q)_\chi^t$ determines the \emph{compression isomorphism}
\e
\label{eqn:compression}
	\general (p)^* \otimes \general (q) \xrightarrow{\eqref{eqn:symbols-direct-sums}, \eqref{eqn:symbols-adjoints}} \general (-p^\dagger \oplus q) \xrightarrow{\eqref{eqn:symbol-deformations}} \general \left(i^*(p,\Xi,q)_{\chi}^1\right).
\e
If $\chi_0, \chi_1, \ldots, \chi_r$ are cut-offs as in Proposition~\ref{prop:deformation-of-rel-pairs},
then so are all convex combinations
$\sum_{i=0}^r s_i\chi_i$ for $s \in \Delta^r$.
For $r=1$ we thus have a commutative diagram
\e\label{eqn:supp-def-triangle}
\begin{aligned}\xymatrix@R=1ex@C=5ex{
	&\general \left(i^*(p,\Xi,q)_{\chi_0}^1\right)\ar[dd]^{\general \{(p,\Xi,q)_{(1-s)\chi_0 + s\chi_1}\}_{s\in [0,1]}}\\
\general (p)^* \otimes \general(q)\ar[ru]\ar[rd] &\\
	&\general \left(i^*(p,\Xi,q)_{\chi_1}^1\right).
}\end{aligned}
\e

\subsubsection{Proof of Theorem~\ref{main-theorem} for zeroth-order families}

We must define $\Ex(\phi,\Pi,\Kappa)$ and verify Theorem~\ref{main-theorem} in the $m=0$ case.

Let $(\phi,\Pi,\Kappa)\colon (p_-,\Xi_-,q_-) \to (p_+,\Xi_+, q_+)$ be an isomorphism
of relative $Y$\nobreakdash-pairs of zeroth order.
Pick $\chi_+ \in C^\infty_\mathrm{cpt}(U_+,[0,1])$ with $\chi_+|_{L_+}\equiv 1$
and let $\chi_- \coloneqq \chi_+ \circ \phi$. Depending on this choice, we have by
Proposition~\ref{prop:deformation-of-rel-pairs} a deformation
$(p_\pm,\Xi_\pm,q_\pm)_\chi^t$ through symbols of
type {\ITEMgeneral}, beginning with $-p_\pm^\dagger \oplus q_\pm$ and
ending with a family $(p_\pm, \Xi_\pm, q_\pm)_{\chi_\pm}^1$ of elliptic symbols compactly
supported in $U_\pm$ and two compression isomorphisms \eqref{eqn:compression}.
Moreover, the isomorphism $(\phi,\Pi,\Kappa)$
induces identifications ${(\phi, \Pi^1\oplus\Kappa^0, \Pi^0\oplus \Kappa^1)}$ as in \eqref{eqn:compressed-identification}.
Define
\e\label{eqn:zeroth-order-excision-def}
\begin{aligned}
\xymatrix@C=18ex{
\general (p_-)^* \otimes \general (q_-)\ar@{-->}[r]_{\Ex(\phi,\Pi,\Kappa)}
\ar[d]_{\eqref{eqn:compression}}
&
\general (p_+)^* \otimes \general (q_+)
\ar[d]^{\eqref{eqn:compression}}\\
\general \left( (p_-, \Xi_-, q_-)_{\chi_-}^1 \right)
&
\general \left( (p_+, \Xi_+, q_+)_{\chi_+}^1 \right)\\
\general \left( (p_-, \Xi_-, q_-)_{\chi_-}^1|_{U_-} \right)\ar[u]_{i^-_!}^{\eqref{eqn:pushforwards}}
&
\general \left((p_+, \Xi_+, q_+)_{\chi_+}^1|_{U_+} \right)\ar[u]^{i^+_!}_{\eqref{eqn:pushforwards}}
\ar[l]^{\eqref{eqn:maps-induced-by-identification}}_-{(\phi,\Pi^1\oplus \Kappa^0, \Pi^0\oplus \Kappa^1)^*}
}
\end{aligned}\e
The composition $\Ex(\phi,\Pi,\Kappa)$ is independent of the choice of $\chi_+$ by
\eqref{eqn:supp-def-triangle}, since according to Corollary~\ref{cor:deformations} both
\eqref{eqn:maps-induced-by-identification}, \eqref{eqn:pushforwards} are compatible
with deformations.

Property (i) then follows by using $\chi_+ = 0$ and similarly (ii) follows by using the same cut-off
for the supersets. Since all of the maps used in the construction of \eqref{eqn:zeroth-order-excision-def}
are functorial and compatible with direct sums and deformations, we see that (iii)--(vi)
follow from the corresponding properties in Theorem~\ref{thm:symbol-calculus}.

\section{Determinant, Pfaffian, and spectral covers}

\subsection{Sign convention and spectral preliminaries}\label{ssec:sign-conventions}

\subsubsection{Supersymmetric sign convention}

The top exterior power `$\Lambdatop$' of a finite-dimensional vector space has the
property that a short exact sequence
$\Sigma\colon 0\to U \xrightarrow{i} V \xrightarrow{p} W \to 0$ induces
an isomorphism
\e\label{equation.3.1}
\begin{aligned}
\operatorname{det}_\Sigma\colon
\Lambdatop U \otimes \Lambdatop W \longrightarrow \Lambdatop V,\qquad
u\otimes p_*(w) \mapsto i_*(u)\wedge w,
\end{aligned}
\e
where $u \in \Lambda^{\dim U} U$, $w \in \Lambda^{\dim W} V$, and $i_*, p_*$
are the maps induced by $i, p$ on exterior powers.
This expresses our orientation convention that in such a sequence
$U$\nobreakdash-coordinates are regarded to come before $W$\nobreakdash-coordinates in $V$.
The data \eqref{equation.3.1} defines a determinant functor, in the sense
of Deligne~\cite[{\paragraphS4.3}]{Del}, which has a unique extension
to bounded complexes \cite{Knu}, subject to a sign convention.

We use the sign convention of supersymmetry, where
vectors and co-vectors are viewed as odd and scalars
as even when commuting them. This rule determines
various isomorphisms involving tensor
products and dualization, which for convenience we make
explicit. Thus we regard the determinant as a $\Z_2$\nobreakdash-graded line in
degree $\equiv \dim V \in \Z_2$, use the graded tensor
product, and braid
\e\label{eqn:tau}
\sigma\colon L_1 \otimes L_2 \longrightarrow L_2\otimes L_1,
\quad
\sigma(x_1\otimes x_2) = (-1)^{\deg L_1\cdot \deg L_2} x_2 \otimes x_1.
\e
We agree to evaluate functionals on the left
${\operatorname{ev}\colon L \otimes L^* \to \R}$, $(x,\al)\mapsto \langle x,\al\rangle = \al(x)$,
so that evaluation on the right introduces a sign $(-1)^{\deg L}$. This convention
matches \eqref{eqn:detP} in that the dual appears there also on the right.
Instead of the na\"ive one, we insist on the identification
\e\label{L1L2dual}
	\tau\colon L_1^*\otimes L_2^* \longrightarrow (L_2\otimes L_1)^*,
	\quad
	(x_2\otimes x_1)(\al_1\otimes \al_2)\coloneqq
	\langle x_1, \al_1\rangle \langle x_2, \al_2\rangle.
\e
In the same way, to identify $\Lambdatop(V^*)$ with $(\Lambdatop V)^*$
we must use
\[
	\mathcal{P}\colon
	\al_1 \wedge \ldots \wedge\al_n \longmapsto
	\left(
	v_n \wedge \ldots \wedge v_1 \longmapsto \operatorname{det}[v_j(\al_i)]_{i,j=1}^n
	\right),
\]
which differs by $(-1)^{n(n-1)/2}$ from the na\"ive convention.
For the dual of an exact sequence
$\Sigma^*\colon 0 \to W^* \to V^* \to U^* \to 0$ we then get
commutative diagrams
\[
\begin{aligned}
\xymatrix@C=10ex{
	\Lambdatop (W^*) \otimes \Lambdatop(U^*)
	\ar[d]_{(\mathcal{P}\otimes \mathcal{P})}
	\ar[r]_-{\det_{\Sigma^*}} & \Lambdatop(V^*)\ar[d]^{\mathcal{P}}\\
(\Lambdatop W)^*\otimes (\Lambdatop U)^*\ar[r]^-{(\det_\Sigma^{-1})^*\circ\tau} & (\Lambdatop V)^*.
}
\end{aligned}
\]

\subsubsection{Spectral theory of Fredholm operators}

We shall use a definition of the Quillen determinant for Hilbert spaces due
to Bismut--Freed \cite[Sect.~f)]{BiFr} in terms of spectral theory.

Recall that the \emph{essential spectrum} of a bounded operator is defined as its
spectrum in the Calkin algebra $\mathcal{B}/\mathcal{K}$
modulo compact operators. By definition, $0\notin \spec_\mathrm{ess}(A_0)$
for a Fredholm operator $A_0$. As the spectrum in $\mathcal{B}/\mathcal{K}$ is
a closed set, we can then find an essential spectral gap $(-\de,\de)$ with $\de>0$.

\begin{lem}[see~\cite{Phi}]\label{lem:saFred}
For a self-adjoint Fredholm operator ${A_0\colon H \to H}$ let $\de > 0$ be
such that $\spec_\mathrm{ess}(A_0)\cap (-\de, \de)=\emptyset$.
For all $ -\de < \nu < 0 < \mu < \de$ with $\mu,\nu \notin \spec A_0$ there exists a
neighborhood $U$ of $A_0$ in the space of self-adjoint Fredholm operators with the following properties:
\begin{enumerate}
\item[\textup{(i)}]
$\forall A \in U: \nu, \mu \notin \spec A$ and $\spec_\mathrm{ess}(A) \cap (-\de,\de) = \emptyset$.
\item[\textup{(ii)}]
The direct sum $V_{(\nu,\mu)}(A) \subset H$ of all eigenspaces of $A$ with
eigenvalue $\nu < \lambda < \mu$ defines a vector bundle over $U$ of finite locally
constant rank.
\end{enumerate}
\end{lem}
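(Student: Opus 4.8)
The statement is a local stability result for the finite-dimensional spectral subspaces of a self-adjoint Fredholm operator. My plan is to use the holomorphic functional calculus together with the Riesz spectral projection. Fix $A_0$ and $\de>0$ with $\spec_\mathrm{ess}(A_0)\cap(-\de,\de)=\emptyset$, and choose $-\de<\nu<0<\mu<\de$ with $\nu,\mu\notin\spec A_0$. Since $\spec A_0\cap[\nu,\mu]$ consists of finitely many eigenvalues of finite multiplicity (the complement of the essential spectrum inside $(-\de,\de)$ is discrete with finite-rank eigenprojections), I can pick a positively oriented rectangular contour $\Gamma\subset\C$ enclosing exactly $\spec A_0\cap(\nu,\mu)$ and crossing the real axis only at $\nu$ and $\mu$. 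The key point is that $\nu,\mu\notin\spec A_0$ means $A_0-\nu$ and $A_0-\mu$ are invertible, so $\Gamma\cap\spec A_0=\emptyset$, and by upper semicontinuity of the spectrum there is a neighborhood $U$ of $A_0$ (first in all bounded self-adjoint operators, then intersected with the Fredholm operators) on which $\Gamma$ still misses the spectrum. Shrinking $U$ I also keep $\spec_\mathrm{ess}(\,\cdot\,)\cap(-\de,\de)=\emptyset$, using that the essential spectrum is locally constant in the sense that it cannot suddenly enter a gap under small perturbation (the distance to the Calkin-algebra spectrum is continuous). This gives (i), once I also note that for self-adjoint $A$ the eigenvalues inside the gap are real, so $\spec A\cap(\nu,\mu)$ lies on the real segment enclosed by $\Gamma$.

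For (ii), define $P(A)=\frac{1}{2\pi i}\oint_\Gamma (\zeta-A)^{-1}\,d\zeta$ for $A\in U$. This is a norm-continuous family of projections (continuity of $A\mapsto(\zeta-A)^{-1}$ uniformly for $\zeta\in\Gamma$, since $\Gamma$ is compact and stays a fixed distance from $\spec A$), and $V_{(\nu,\mu)}(A)=\Im P(A)$ is precisely the sum of eigenspaces of $A$ with eigenvalue in $(\nu,\mu)$ because $A$ is self-adjoint and $\spec A\cap\Gamma=\emptyset$ forces the only spectrum inside $\Gamma$ to be this finite set of eigenvalues (here I use that the part of the spectrum outside the gap is essential and hence outside $\Gamma$). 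A norm-continuous family of projections on a Hilbert space has locally constant, finite rank near $A_0$ (standard: if $\|P(A)-P(A_0)\|<1$ then $P(A)$ and $P(A_0)$ are conjugate by an invertible operator, e.g. the one built from $P(A)P(A_0)+(1-P(A))(1-P(A_0))$), and one gets a local trivialization $\Im P(A)\cong\Im P(A_0)$ by $x\mapsto P(A)x$; shrinking $U$ so that $\|P(A)-P(A_0)\|<1$ throughout makes $\bigcup_{A\in U}\Im P(A)$ a vector bundle over $U$ with these as local charts.

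The main obstacle, and the only place requiring care, is the behavior of the \emph{essential} spectrum: I must ensure that shrinking $U$ keeps the gap $(-\de,\de)$ essentially empty, and that no new spectrum creeps across $\nu$ or $\mu$ from the essential part. This is handled by the observation that the image of $A$ in the Calkin algebra $\mathcal{B}/\mathcal{K}$ depends norm-continuously on $A$, the Calkin algebra is a $C^*$-algebra so its spectrum varies upper-semicontinuously, and $\spec_\mathrm{ess}(A_0)$ already avoids the compact set $[-\de,\de]$ with positive distance — so a small enough neighborhood preserves this. Everything else is a routine application of Riesz projections; in fact this is exactly the classical statement cited as~\cite{Phi}, so I would keep the write-up brief and simply assemble these standard ingredients.
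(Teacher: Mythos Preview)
Your proposal is correct and follows essentially the same strategy as the paper: construct the spectral projection onto $V_{(\nu,\mu)}(A)$ via functional calculus and use norm-continuity of $A\mapsto P(A)$ to conclude that the ranks are locally constant and the images form a vector bundle. The only cosmetic difference is that you use the Riesz contour integral $P(A)=\tfrac{1}{2\pi i}\oint_\Gamma(\zeta-A)^{-1}\,d\zeta$, whereas the paper uses the continuous functional calculus with a trapezoidal cut-off $f$ (supported on $[\nu-\varepsilon,\mu+\varepsilon]$, equal to $1$ on $[\nu,\mu]$) so that $\chi_{(\nu,\mu)}(A)=f(A)$; both yield the same projection and the same continuity argument.
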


\begin{proof}
Pick $\varepsilon > 0$ such that both $(\nu-\varepsilon,\nu+\varepsilon)$ and $(\mu-\varepsilon, \mu+\varepsilon)$ are disjoint from $\spec A_0$.
This remains true in a neighborhood $U$ of $A_0$ where we
require also $\spec_\mathrm{ess}(A) \cap (-\de,\de) = \emptyset$.
Then all $A-\lambda$ with $\nu < \lambda < \mu$ are Fredholm and $A$ has discrete
eigenvalues near zero. Hence $V_{(\nu,\mu)}(A)$ is finite-dimensional.
The projection $\chi_{(\nu,\mu)}(A)$ onto $V_{(\nu,\mu)}(A)$ can be formed using
functional calculus.
Let $f$ be the continuous function with $f|_{[\nu,\mu]}\equiv 1$,
$f_{(-\infty,\nu-\varepsilon]}\equiv 0$, $f_{[\mu+\varepsilon,\infty)}\equiv 0$,
and that is otherwise affine-linear. Then $\chi_{(\nu,\mu)}(A) = f(A)$ for all $A \in U$
and the map $A\mapsto f(A)$ from $U$ into the bounded projections is continuous. In particular
it has locally constant finite rank, so its image is a vector subbundle.
\end{proof}

\begin{dfn}\label{dfn:iso-of-operators}
Let $\cH^0, \cH^1$ be Hilbert bundles over a space $Y$. The bounded operators
in each fiber define a Banach bundle $\mathcal{B}(\cH^0,\cH^1)$ in the operator norm.
A \emph{$Y$\nobreakdash-family of Fredholm operators} is a continuous section $P\colon Y \to \mathcal{B}(\cH^0,\cH^1)$ that is fiberwise Fredholm.
Let $P_\pm \colon Y \to \mathcal{B}(\cH^0_\pm,\cH^1_\pm)$ be families of Fredholm operators.
An \emph{isomorphism} $F^\bu\colon P_- \to P_+$ is a pair of continuous sections ${F^\bu\colon Y \to \mathcal{B}(\cH^\bu_-,\cH^\bu_+)}$ of invertible operators satisfying $F^1\circ P_- = P_+ \circ F^0$.
\end{dfn}

\subsection{Determinant cover of real Fredholm operators}\label{ssec:Determinant}

\subsubsection{Determinant line bundle}

\begin{dfn}
	Let $H^0, H^1$ be Hilbert spaces.
	The \emph{determinant line} of a Fredholm
	operator $P \colon H^0 \to H^1$ (regarded as a two term complex) is
	\e\label{eqn:detP}
	\det P \coloneqq \Lambdatop \Ker P \otimes \left(\Lambdatop \Ker P^\dagger\right)^*,
	\e
	a $\Z_2$\nobreakdash-graded line in degree $(-1)^{\ind P}$. More generally, for a Fredholm
	operator between Banach spaces, replace $\Ker P^\dagger$ by $\Coker P$.
\end{dfn}

For a family of Fredholm operators the disjoint union over all
\eqref{eqn:detP} will be topologized as a line bundle using
Lemma~\ref{lem:saFred}. It generalizes that of Freed~\cite{Fre} for Dirac operators.
One may alternatively use `stabilization' to topologize the determinant line bundle
for general Banach spaces, see for example Zinger~\cite{Zing}, paying attention
to a tedious sign convention, as in~\eqref{eqn:detSigma}.

We say $\mu > 0$ is \emph{sufficiently small} for $P$
if $[0,\mu] \cap \spec_\mathrm{ess}(P^\dagger P) = \emptyset$.
For $P$ Fredholm and $0< \mu < \nu$ sufficiently small, define
\[
\begin{aligned}
\Lambdatop V_{[0,\nu]}(P^\dagger P) \otimes \left(\Lambdatop V_{[0,\nu]}(PP^\dagger)\right)^*
&\xrightarrow{\stab_{\nu,\mu}}
\Lambdatop V_{[0,\mu]}(P^\dagger P) \otimes \left(\Lambdatop V_{[0,\mu]}(PP^\dagger)\right)^*
\\
\stab_{\nu,\mu}(v \wedge w \otimes
	\be \wedge \al)
	&=
	\be(\Lambda(P) w) \cdot v\otimes\al,
\end{aligned}
\]
for
$v \in \Lambdatop V_{[0,\mu]}(P^\dagger P), \al \in \left(\Lambdatop V_{[0,\mu]}(PP^\dagger)\right)^*, w \in \Lambdatop V_{(\mu,\nu]}(P^\dagger P)$, and $\be \in \left(\Lambdatop V_{(\mu,\nu]}(PP^\dagger)\right)^*$.
As $P$ restricts to an isomorphism ${P \colon V_\lambda(P^\dagger P) \to V_\lambda(PP^\dagger)}$
when $\lambda \neq 0$, the top exterior power makes sense on $(\mu,\nu]$. It is easy to check
\e\label{stab-compatible}
	\stab_{\mu,0}\circ\stab_{\nu,\mu}=\stab_{\nu,0}.
\e

\begin{dfn}
Let $P$ be a $Y$\nobreakdash-family of Fredholm operators.
The {\it determinant line bundle} of $P$ is
the $Y$\nobreakdash-family of one-dimensional vector spaces $\{\det P_y\}_{y\in Y}$ with
the following topology.
Let $y_0 \in Y$ and pick $\mu>0$ sufficiently small for $P_{y_0}$ with
$\mu \notin \spec P^\dagger_{y_0}P_{y_0}$.
Pick a neighborhood $U$ of $y_0$ over which the Hilbert bundles are trivial. By shrinking $U$
we may also assume the conclusions of Lemma~\ref{lem:saFred}. Then we can transport the bundle topology provided by Lemma~\ref{lem:saFred}(ii) on exterior powers along the isomorphisms
\e\label{eqn:topology-det}
\stab_{\mu,0} \colon
\Lambdatop V_{[0,\mu]}(P^\dagger P)\otimes \left( \Lambdatop V_{[0,\mu]}(PP^\dagger) \right)^*
\longrightarrow \det P|_U.
\e
This topology is independent of $\mu$ by \eqref{stab-compatible} and since $\operatorname{stab}_{\nu,\mu}$ are homeomorphisms.
It is also easily seen to be independent of the trivializations of the
Hilbert bundles on overlaps. Hence we get a line bundle $\det P \to Y$.
\end{dfn}

\begin{prop}\label{prop:properties-det}
The determinant line bundle has the following properties:
\begin{enumerate}
\item[\textup{(i)}] \textup{(Functoriality.)}~Let $P_\pm$ be $Y$\nobreakdash-families of Fredholm operators.
An isomorphism $F^\bu\colon P_- \to P_+$ induces an isomorphism
\e\label{eqn:det-functorial}
\Lambdatop F^0 \otimes \left(\Lambdatop F^1\right)^{*,-1}
\colon
	\det P_- \longrightarrow \det P_+.
\e
\item[\textup{(ii)}] \textup{(Direct sums.)}~For $Y$\nobreakdash-families of Fredholm operators $P, Q$ there is a canonical isomorphism, natural for \eqref{eqn:det-functorial},
\e\label{eqn:det-direct-sums}
	\operatorname{det}_{P,Q}\colon \det P \otimes \det Q \longrightarrow \det(P\oplus Q).
\e
These are associative.
They are graded commutative in the sense of a commutative diagram
\e\label{eqn:det-graded-commutative}
\begin{aligned}
\xymatrix@C=7ex{
		\det P \otimes \det Q\ar[d]_\sigma\ar[r]_-{\operatorname{det}_{P,Q}} & \det (P\oplus Q)\ar[d]^{\det(\operatorname{swap})}\\
		\det Q \otimes \det P\ar[r]^-{\operatorname{det}_{Q,P}} & \det (Q\oplus P).
}
\end{aligned}
\e
Here $\sigma$ includes $(-1)^{\ind P \cdot \ind Q}$ and the isomorphism $\operatorname{swap}$ exchanges
the Hilbert spaces without a sign. More generally, a short exact sequence
\[
	\Sigma\colon 0 \longrightarrow P_0 \longrightarrow P_1 \longrightarrow P_2 \longrightarrow 0
\]
of Fredholm operators, meaning a diagram of bounded operators
\e\label{eqn:KES-of-Fredholm}
\begin{aligned}
\xymatrix{
	0\ar[r] & H_-^0 \ar[r]_{i^-}\ar[d]_{P^0} & H_-^1\ar[d]_{P^1}\ar[r]_{p^-} & H_-^2\ar[d]_{P^2}\ar[r] & 0\\
	0\ar[r] & H_+^0 \ar[r]^{i^+} & H_+^1\ar[r]^{p^+} & H_+^2\ar[r] & 0
}
\end{aligned}
\e
with exact rows and $P^0, P^1, P^2$, induces an isomorphism
\[
	\operatorname{det}_\Sigma \colon \det P^0 \otimes \det P^2 \longrightarrow \det P^1.
\]
\item[\textup{(iii)}] \textup{(Adjoints.)}~There is a canonical isomorphism $\tau\colon \det P^\dagger \to (\det P)^*$.
It is natural for \eqref{eqn:det-functorial}\textup{:}
\e\label{eqn:adjoint-natural}
\xymatrix{
\det P^\dagger\ar[r]_-{\tau} & (\det P)^*\\
\det Q^\dagger\ar[u]^{\det f^\dagger}\ar[r]^-{\tau} & (\det Q)^*.\ar[u]_{(\det f)^*}
}
\e
For the exact sequence $\Sigma^\dagger\colon 0 \to P_2^\dagger \to P_1^\dagger \to P_0^\dagger \to 0$ adjoint to \eqref{eqn:KES-of-Fredholm} and the isomorphisms we have a commutative diagram
\e\label{eqn:adjoint-KES}
\xymatrix@R=3ex{
\det P_2^\dagger \otimes \det P_0^\dagger \ar[d]_{\tau\otimes\tau}\ar[r]_-{\operatorname{det}_{\Sigma^\dagger}} & \det P_1^\dagger\ar[dd]^\tau\\
(\det P_2)^* \otimes \det (P_0)^*\ar[d]_{\eqref{L1L2dual}} & \\
(\det P_0 \otimes \det P_2)^* & (\det P_1)^*\ar[l]_-{(\operatorname{det}_\Sigma)^*}
}
\e
\item[\textup{(iv)}] \textup{(Composition.)}~When ${P = P^+ \circ P^-}$ factors into two Fredholm operators
${P^-\colon H^- \to H^0}$ and ${P^+ \colon H^0 \to H^+}$,
we get an isomorphism
\[
	\operatorname{det}_{P^-,P^+}\colon \det P^- \otimes \det P^+ \longrightarrow \det P.
\]
\item[\textup{(v)}] \textup{(Invertible.)}~For a $Y$\nobreakdash-family of invertible operators $P$
the determinant line bundle has a canonical continuous trivialization $\operatorname{det}(P) = 1\otimes 1^*$.
\end{enumerate}
\end{prop}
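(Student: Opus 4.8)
The plan is to establish each of the five listed properties of the determinant line bundle by reducing to finite-dimensional linear algebra via the stabilization maps $\stab_{\mu,0}$ that define the topology. The key point throughout is that, locally on $Y$, after choosing $\mu > 0$ sufficiently small and shrinking to a neighborhood $U$ over which Lemma~\ref{lem:saFred} applies, the determinant line $\det P|_U$ is identified with the genuinely finite-dimensional object $\Lambdatop V_{[0,\mu]}(P^\dagger P) \otimes (\Lambdatop V_{[0,\mu]}(PP^\dagger))^*$. All the desired isomorphisms will be built first on these finite-dimensional models, where they are instances of the classical determinant functor formalism of~\eqref{equation.3.1} and the sign conventions of Section~\ref{ssec:sign-conventions}, and then checked to be independent of the auxiliary choice of $\mu$ using the compatibility~\eqref{stab-compatible} together with the fact that the $\stab_{\nu,\mu}$ are homeomorphisms.

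\textbf{Step-by-step.} First, for (i), given $F^\bu\colon P_- \to P_+$, note that $F^\bu$ conjugates $P_-^\dagger P_-$ to an operator similar to $P_+^\dagger P_+$; more carefully, $F^0$ maps $V_{[0,\mu]}(P_-^\dagger P_-)$ isomorphically onto the corresponding spectral subspace of $(F^0)^{-\dagger}P_+^\dagger P_+ F^0$, and one shows this is homotopic through Fredholm families to $F^0$ itself acting on $V_{[0,\mu]}$ — but the cleanest route is simply to observe that $F^\bu$ induces an isomorphism of two-term complexes, hence an isomorphism $\Lambdatop \Ker P_- \otimes (\Lambdatop \Coker P_-)^* \to \Lambdatop \Ker P_+ \otimes (\Lambdatop \Coker P_+)^*$, and that this is continuous by comparing with the action of $F^\bu$ on the finite-dimensional spectral bundles $V_{[0,\mu]}$, which depends continuously on $y$. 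For (ii), the direct sum isomorphism comes from the canonical identification $V_{[0,\mu]}((P\oplus Q)^\dagger(P\oplus Q)) = V_{[0,\mu]}(P^\dagger P) \oplus V_{[0,\mu]}(Q^\dagger Q)$ (valid once $\mu$ is small for both) and the multiplicativity of $\Lambdatop$ on direct sums, i.e.\ the determinant functor~\eqref{equation.3.1} applied to $0 \to \Lambdatop(\cdot) \otimes \Lambdatop(\cdot) \to \Lambdatop(\cdot\oplus\cdot) \to 0$; associativity and graded commutativity (with the sign $(-1)^{\ind P\cdot\ind Q}$ from the braiding~\eqref{eqn:tau}) are then formal consequences of the corresponding properties of the determinant functor, and the general short exact sequence case follows from the extension of the determinant functor to bounded complexes \cite{Knu}, since~\eqref{eqn:KES-of-Fredholm} is an exact sequence of two-term complexes. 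For (iii), the isomorphism $\tau\colon \det P^\dagger \to (\det P)^*$ is induced by $\Ker P^\dagger = \Coker P$ and $\Coker P^\dagger = \Ker P$ together with the chosen identification $\mathcal{P}$ of $\Lambdatop(V^*)$ with $(\Lambdatop V)^*$; naturality~\eqref{eqn:adjoint-natural} and compatibility with exact sequences~\eqref{eqn:adjoint-KES} are bookkeeping with the conventions $\mathcal{P}$, $\tau$ of~\eqref{L1L2dual}, and the sign $(-1)^{n(n-1)/2}$. For (iv), a factorization $P = P^+\circ P^-$ gives an exact sequence $0 \to \Ker P^- \to \Ker P \to \Ker P^+ \to \Coker P^- \to \Coker P \to \Coker P^+ \to 0$ (from the snake lemma applied to the two-term complexes), whose determinant via~\eqref{equation.3.1} yields $\det P^- \otimes \det P^+ \cong \det P$; alternatively, deform $P^+\circ P^-$ to $P^+ \oplus P^-$ through $\begin{pmatrix} tP^- & \cdot \\ \cdot & P^+ \end{pmatrix}$-type Fredholm families and invoke (ii). For (v), if $P$ is invertible then $\Ker P = \Coker P = 0$, so $\det P = \Lambdatop 0 \otimes (\Lambdatop 0)^* = \R \otimes \R^* = 1 \otimes 1^*$ canonically, and continuity is immediate since $V_{[0,\mu]} = 0$ for small $\mu$ locally.

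\textbf{Main obstacle.} The genuinely technical content is not the algebra but the \emph{continuity} claims: one must check in each of (i)--(v) that the fibrewise-defined isomorphism is continuous in the topology specified by~\eqref{eqn:topology-det}, i.e.\ that under the local trivializations $\stab_{\mu,0}$ it corresponds to a continuous map of the finite-rank spectral bundles $V_{[0,\mu]}(\cdot)$. The subtlety is that the spectral cutoff $\mu$ that works near a point $y_0$ need not work near another point, so one genuinely needs the transitivity~\eqref{stab-compatible} and the homeomorphism property of $\stab_{\nu,\mu}$ to patch; for the direct-sum and composition statements one additionally needs $\mu$ simultaneously small for both operators, which is arranged by shrinking $U$. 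A secondary source of friction is keeping the supersymmetric sign conventions of Section~\ref{ssec:sign-conventions} consistent throughout — in particular the factor $(-1)^{\ind P\cdot\ind Q}$ in~\eqref{eqn:det-graded-commutative} and the $\mathcal{P}$-versus-na\"ive discrepancy $(-1)^{n(n-1)/2}$ in~\eqref{eqn:adjoint-KES} — but this is entirely mechanical once the conventions are fixed. I expect the continuity verification, handled uniformly by the spectral-bundle device, to be the crux, with everything else reducing to the standard determinant-functor formalism.
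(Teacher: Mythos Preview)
Your approach is essentially the same as the paper's: reduce everything to the finite-dimensional spectral models via $\stab_{\mu,0}$, invoke the determinant-functor formalism and the sign conventions of Section~\ref{ssec:sign-conventions}, and verify continuity on the spectral bundles of Lemma~\ref{lem:saFred}. The one point the paper handles more carefully is (i) when $F^\bu$ is not metric-preserving: then $F^0$ does \emph{not} carry $V_{[0,\mu]}(P_-^\dagger P_-)$ into $V_{[0,\mu]}(P_+^\dagger P_+)$, so your continuity check ``by comparing with the action of $F^\bu$ on the finite-dimensional spectral bundles'' does not go through as stated; the paper's fix is to replace $F$ by its unitary part $F(F^\dagger F)^{-1/2}$ (or to work in the Banach setting with cokernels rather than $\Ker P^\dagger$), after which the spectral subspaces are preserved and continuity follows exactly as you describe.
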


\begin{proof}
(i)\hskip\labelsep
Assume $F$ is metric preserving.
The induced isomorphism is continuous, since $F$ induces a map
on the left hand side of \eqref{eqn:topology-det}.
In fact, by passing to $F(F^\dagger F)^{-1/2}$ one need not assume $F$
to be metric preserving. Alternatively, everything
makes sense for Fredholm operators between Banach spaces.

\smallskip\noindent
(ii)\hskip\labelsep
A short exact sequence \eqref{eqn:KES-of-Fredholm} determines a snake lemma exact sequence\\
\begin{minipage}{\textwidth}
\begin{tikzpicture}
	\matrix (m) [matrix of math nodes,row sep=2em,column sep=-1em]
	{
		\Ker P^0 && \Ker P^1 && \Ker P^2 && \Coker P^0 && \Coker P^1 && \Coker P^2\\
		&\phantom{\Coker F}&& \Coker i^-_* && \Coker p^-_* && \Coker \delta &&\phantom{\Coker F}\\
	};

	\node [below = 0ex of m-1-1] {$a$};
	\node [below = -1ex of m-2-4] {$d$};
	\node [below = -1ex of m-2-6] {$e,\ga$};
	\node [below = -1ex of m-2-8] {$\be$};
	\node [below = -1ex of m-1-11] {$\ze$};

	\path[right hook-latex] (m-1-1) edge node[above]{\small$i^-_*$} (m-1-3);
	\path[-latex] (m-1-3) edge node[above]{\small$p^-_*$} (m-1-5);
	\path[-latex] (m-1-5) edge node[above]{\small$\delta$} (m-1-7);
	\path[-latex] (m-1-7) edge node[above]{\small$i^+_*$} (m-1-9);
	\path[>=latex,->>] (m-1-9) edge node[above]{\small$p^+_*$} (m-1-11);
	
	\path[>=latex,->>] (m-1-3) edge (m-2-4);
	\path[>=latex,->>] (m-1-5) edge (m-2-6);
	\path[>=latex,->>] (m-1-7) edge (m-2-8);
	
	\path[right hook-latex] (m-2-4) edge node[below right]{\small$p^-_*$} (m-1-5);
	\path[right hook-latex] (m-2-6) edge node[below right]{\small$\delta$}(m-1-7);
	\path[right hook-latex] (m-2-8) edge node[below right]{\small$i^+_*$} (m-1-9);
\end{tikzpicture}
\end{minipage}
that we splice as indicated into short exact sequences $\Sigma_{\Ker P^1}$,
$\Sigma_{\Ker P^2}$, $\Sigma_{\Coker P^0}$, $\Sigma_{\Coker P^1}$ named
after their middle term. Using \eqref{equation.3.1}, we define, denoting the dual homomorphisms
by $(\cdot)^*$,
\e\label{eqn:detSigma}
\begin{aligned}
\operatorname{det}_\Sigma\colon &\det P^0 \otimes \det P^2 \longrightarrow \det P^1,\\
&a \otimes (\operatorname{det}^{-1}_{\Sigma_{\Coker P^0}})^*(\be\otimes \ga) \otimes
\operatorname{det}_{\Sigma_{\Ker P^2}}(d\otimes e) \otimes \zeta\\
&\longmapsto
\ep \cdot \ga(e)\cdot \operatorname{det}_{\Sigma_{\Ker P^1}}(a\otimes d)\otimes (\operatorname{det}^{-1}_{\Sigma_{\Coker P_1}})^*(\ze\otimes \be),
\end{aligned}
\e
using the sign $\ep = (-1)^{\ga+\ga d + \be(d+\ze)}$ dictated by
our supersymmetry conventions. This sign convention
agrees with that of Zinger~\cite[(4.10)]{Zing}. We refer
to \cite[(2.27) and Cor.~4.13]{Zing} for the tedious verification of associativity.
For a direct sum
the coboundary $\delta$ vanishes and \eqref{eqn:detSigma} reduces to
\begin{align*}
\hskip-1ex\det P^0 \otimes \det P^2
=&
\Lambdatop \Ker P^0 \otimes \left(\Lambdatop \Coker P^0\right)^*
\otimes
\Lambdatop \Ker P^2 \otimes \left(\Lambdatop \Coker P^2\right)^*\\
\overset{(1\otimes 1\otimes \tau)(1\otimes\sigma\otimes 1)}{=}&
\Lambdatop \Ker P^0\otimes\Lambdatop \Ker P^2
\otimes
\left(\Lambdatop \Coker P^2 \otimes \Lambdatop \Coker P^0\right)^*\\
\overset{1\otimes 1\otimes \sigma^*}{=}&
\Lambdatop \Ker P^0\otimes\Lambdatop \Ker P^2
\otimes
\left(\Lambdatop \Coker P^0 \otimes \Lambdatop \Coker P^2\right)^*,
\end{align*}
and so to na\"ive rearrangement with sign $(-1)^{\dim \Coker P^0\cdot \ind P^2}$.
In this case, which is all we need, the associativity and graded commutativity is easy.

\smallskip
\noindent
(iii)\hskip\labelsep
We define $\tau$ to be \eqref{L1L2dual}.
The verifications of \eqref{eqn:adjoint-natural} and \eqref{eqn:adjoint-KES} are straightforward,
inserting signs whenever commuting symbols and using that the snake lemma exact sequence
of $\Sigma^\dagger$ is the adjoint of the snake lemma sequence of $\Sigma$.

\smallskip
\noindent
(iv)\hskip\labelsep
This follows by applying \eqref{equation.3.1} to the splicings of the exact sequence
\[
	0\to \Ker P^- \to \Ker P  \to \Ker P^+ \to \Coker P^- \to \Coker P \to \Coker P^+ \to 0,
\]
inserting again signs when rearranging. One may also regard
(iv) as a special case of (ii) using the short exact sequence ${\Sigma: 0 \to P^- \to P\oplus \id_{H^0} \to P^+ \to 0}$ and then define $\operatorname{det}_{P^-,P^+}$ as $\operatorname{det}_\Sigma$ followed by $\det(P\oplus \id_{H^0}) \cong \det(P)\otimes \det (\id_{H^0}) \cong \det (P)$, again using (ii).

\noindent
(v)\hskip\labelsep
Obviously \eqref{eqn:detP} is canonically trivial and $\operatorname{det}(P)\coloneqq 1\otimes 1^* \in \det(P)$ is a continuous section, by \eqref{eqn:topology-det} for $\mu$ smaller than
the least eigenvalue of $P^\dagger P$.
\end{proof}

\subsubsection{Determinant cover of real Fredholm operators}

\begin{dfn}
The {\it determinant cover} of a $Y$\nobreakdash-family $P$ of \emph{real} Fredholm operators is the
principal $\Z_2$\nobreakdash-bundle $\orient P \coloneqq \left(\det P \setminus \{\text{zero section}\}\right) / \R_{>0}$ of $Y$,
regarded as being $\Z_2$\nobreakdash-graded in degree $(-1)^{\ind P}$.
\end{dfn}

\begin{rem}
For $P$ real or complex self-adjoint Fredholm, evaluation defines a canonical trivialization of \eqref{eqn:detP}.
For real operators we thus get a canonical basepoint $o_{\mathrm{taut},P} \in \orient P$.
However, in families we still get non-trivial determinant covers, since they vary discontinuously.
To understand the discontinuity, suppose for simplicity that $P_t$ is a family
with a single eigenvalue $\lambda_t$ in $[-\mu,\mu]$
which crosses zero once and upwards at $t_0$. There is a corresponding continuous eigenvector
$v_t$. By definition of the topology, $\orient P_t$ is the set of orientations of the vector space of endomorphisms of $\Lambdatop V_{[0,\mu^2]}P^2_t \cong \R$ (identified using $v_t$).
For $t\neq t_0$ the tautological element is represented by the identity of
$\Ker P_t = \{0\}$, viewed as an endomorphism of $V_{[0,\mu^2]}P^2_t$ via $P_t$.
In terms of this identification, $o_{\mathrm{taut},P_t}$ becomes the orientation
of the multiplication map $\lambda_t\colon \R \to \R$. At $t=t_0$ we take the
orientation of the identity map of $\Ker P_{t_0}=\R$. It follows that
\[
o_{\mathrm{taut},P_t}=
\begin{cases}
  -1 & (t<t_0),\\
  +1 & (t\geq t_0).
\end{cases}
\]
This agrees with the parity of the spectral flow along $\{P_t\}$, see~\cite{Phi}. We therefore get
a continuous section $(-1)^{\SF\{ P_s \}_{s \in [0,t]}}\cdot o_{\mathrm{taut},P_t}$
of $\orient(P)$.

In the self-adjoint case we can thus represent $\orient(P)$ by pairs $[\mu,\ep]$
of $\mu > 0$ with $\pm \mu \notin \spec P$ and $\ep \in \{\pm 1\}$. Here
$[\mu,\ep] = [\nu, \de]$ when $\de = \ep\cdot (-1)^{N_{]\mu,\nu[}(P)}$.
\end{rem}

\begin{prop}
The properties \textup{(i)}--\textup{(v)} of Proposition~\textup{\ref{prop:properties-det}} hold analogously
for the determinant cover. The monodromy of the determinant cover around a loop of Fredholm operators
is the parity of the spectral flow.
\end{prop}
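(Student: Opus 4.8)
The plan is to derive everything from the single observation that passing from the determinant line bundle to the determinant cover is the application of a fixed functor, under which the entire structure of Proposition~\ref{prop:properties-det} transports for free. Concretely, sending a real $\Z_2$\nobreakdash-graded line bundle $L \to Y$ to $\mathfrak{o}(L) \coloneqq (L \setminus \{\text{zero section}\})/\R_{>0}$, with the same grading, defines a strong symmetric monoidal functor from the Picard groupoid of real $\Z_2$\nobreakdash-graded line bundles over $Y$ (under $\otimes$) to the Picard groupoid of $\Z_2$\nobreakdash-graded principal $\Z_2$\nobreakdash-bundles over $Y$ (under the contracted product). It sends the trivial line to the trivial $\Z_2$\nobreakdash-bundle, sends $L^*$ to $\mathfrak{o}(L)^*$, intertwines the graded braidings, and on automorphisms sends the scalar $-1 \in \R^\times$ to the nontrivial element of $\Z_2$; the map $\mathfrak{o}(F)$ induced by a continuous line-bundle isomorphism $F$ is again continuous since $F$ preserves complements of zero sections and commutes with the $\R_{>0}$\nobreakdash-action. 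By definition $\orient P = \mathfrak{o}(\det P)$.

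First I would observe that every isomorphism appearing in Proposition~\ref{prop:properties-det} is an isomorphism of real $\Z_2$\nobreakdash-graded line bundles: the functoriality isomorphism \eqref{eqn:det-functorial}, the direct-sum isomorphism \eqref{eqn:det-direct-sums} and more generally $\operatorname{det}_\Sigma$ for a short exact sequence of Fredholm operators, the adjoint isomorphism $\tau$ of part~(iii), the composition isomorphism of part~(iv), and the canonical trivialization of part~(v). Applying $\mathfrak{o}$ to each yields the claimed canonical isomorphism for the determinant cover. Likewise every coherence diagram asserted there --- associativity and graded commutativity of \eqref{eqn:det-direct-sums} (the latter with its sign $(-1)^{\ind P\cdot\ind Q}$, which $\mathfrak{o}$ converts into the corresponding element of $\Z_2$), the naturality squares \eqref{eqn:adjoint-natural} and \eqref{eqn:adjoint-KES}, associativity of $\operatorname{det}_\Sigma$, and so forth --- is a commuting diagram in the Picard groupoid of line bundles, hence is carried by $\mathfrak{o}$ to a commuting diagram of $\Z_2$\nobreakdash-bundles. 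Because $\mathfrak{o}$ is monoidal and grading-preserving, the supersymmetric sign conventions of Section~\ref{ssec:sign-conventions} are respected automatically. This gives the analogues of (i)--(v) with no work beyond the translation; there is nothing non-formal to check here, since $\operatorname{det}_\Sigma$, $\tau$, etc.\ are genuine isomorphisms of line bundles, not merely morphisms.

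For the monodromy statement I would argue by comparing two homomorphisms on $\pi_1(\Fred_\R)$. The assignment $\gamma \mapsto \operatorname{mon}(\gamma^*\orient P)$ is a homomorphism $\pi_1(\Fred_\R) \to \Z_2$ because the holonomy of a principal $\Z_2$\nobreakdash-bundle is, and $\orient P$ is the pullback of the tautological such bundle over $\Fred_\R$. The assignment $\gamma \mapsto (-1)^{\SF(\gamma)}$ is also a homomorphism, by additivity under concatenation and homotopy invariance of the spectral flow \cite{Phi}, bearing in mind that on the real Fredholm operators the integral spectral flow of a loop is only well defined modulo $2$. Now $\pi_1(\Fred_\R)\cong\Z_2$ (equivalently $\Fred_\R \simeq \Z \times BO$), with generator represented by a loop of self-adjoint operators along which a single real eigenvalue crosses $0$ exactly once, exactly the model discussed in the Remark preceding this proposition. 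There the explicit computation of $o_{\mathrm{taut},P_t}$ shows this generator has nontrivial monodromy and odd spectral flow, so the two homomorphisms agree on it and hence coincide. Alternatively, and more directly, the Remark exhibits the continuous section $(-1)^{\SF\{P_s\}_{s\in[0,t]}}\cdot o_{\mathrm{taut},P_t}$ of $\orient(P)$ for self-adjoint families; since $o_{\mathrm{taut}}$ is single-valued, transport around a loop multiplies it by $(-1)^{\SF}$, which is the monodromy, and the general real Fredholm case reduces to the self-adjoint one by joining $P_0$ to a self-adjoint reference operator by a path and transporting $\orient(P)$ along it.

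The steps I expect to be entirely routine are checking that $\mathfrak{o}$ is symmetric monoidal and that $\operatorname{mon}$ and $(-1)^{\SF}$ are homomorphisms on $\pi_1$. The only point needing genuine care is the bookkeeping in the monodromy statement: specifying what ``spectral flow'' means for a loop of not-necessarily-self-adjoint real Fredholm operators and exhibiting an explicit generator of $\pi_1(\Fred_\R)$ that may be taken self-adjoint, so that the model computation of the Remark applies. Everything else is the formal transport of Proposition~\ref{prop:properties-det} along $\mathfrak{o}$.
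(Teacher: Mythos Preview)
The paper states this proposition without an explicit proof, regarding both parts as immediate: properties (i)--(v) are inherited formally from Proposition~\ref{prop:properties-det} by passing to the $\R_{>0}$\nobreakdash-quotient, and the monodromy claim is meant to be read off from the preceding Remark, which computes the discontinuity of the tautological section $o_{\mathrm{taut},P_t}$ in the model case of a single eigenvalue crossing. Your proposal is correct and is exactly this argument made explicit, with the added care of phrasing the first part as transport along a symmetric monoidal functor $\mathfrak{o}$ and the second as a comparison of homomorphisms on $\pi_1(\Fred_\R)\cong\Z_2$.

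The one point you rightly flag as needing care---what ``spectral flow'' should mean for a loop of not-necessarily-self-adjoint real Fredholm operators---is genuinely a gap in the \emph{statement} rather than in your proof. The paper's Remark and the reference \cite{Phi} both concern self-adjoint families; your reduction to the self-adjoint case via a generator of $\pi_1(\Fred_\R)$ is the cleanest way to give the sentence meaning. Your alternative route, transporting along a path to a self-adjoint reference and invoking the explicit continuous section $(-1)^{\SF\{P_s\}_{s\in[0,t]}}\cdot o_{\mathrm{taut},P_t}$ from the Remark, is precisely how the paper intends the claim to be read.
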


\subsection{Pfaffian cover of real skew-adjoint Fredholm operators}\label{ssec:Pfaffian}
\subsubsection{Pfaffian line bundle}

Initially we can work over the real or the complex numbers.

\begin{dfn}
	Let $P \colon H \to H$ be a skew-adjoint Fredholm operator.
	The \emph{Pfaffian line} of $P$ is the graded line in degree $(-1)^{\dim \Ker P}$ defined by
	\[
		\Pfaff P \coloneqq \Lambdatop \Ker P.
	\]
\end{dfn}

The  inner product defines a two-form $\langle P\cdot , \cdot \rangle$ that is 
non-degenerate on each finite-dimensional eigenspace $V_{\lambda}(P^\dagger P)$.
This induces a preferred volume element $\omega_\lambda \in \Lambdatop V_{\lambda}(P^\dagger P)$.
For $0 < \mu < \nu$ sufficiently small for $P$ define a map
$\stab_{\nu,\mu}^\mathrm{skew}\colon
	\Lambdatop V_{[0,\nu]}(P^\dagger P)
	\to
	\Lambdatop V_{[0,\mu]}(P^\dagger P)$
by
${\stab_{\nu,\mu}^\mathrm{skew}
(v\wedge \omega_{\lambda_1}\wedge\ldots \wedge \omega_{\lambda_k})
=v}$,
where $v \in \Lambdatop V_{[0,\mu]}(P^\dagger P)$ and where ${\mu < \lambda_1 < \cdots < \lambda_k} \leq \nu$ are all non-zero eigenvalues of $P^\dagger P$ in this range. Then
\[
	\stab_{\mu,0}^\mathrm{skew}\circ \stab_{\nu,\mu}^\mathrm{skew} = \stab_{\nu,0}^\mathrm{skew}
\]

\begin{dfn}
Let $P$ be a family of skew-adjoint Fredholm operators.
For $\nu, \mu \notin \spec(P^\dagger P)$ sufficiently small for $P$
the maps $\stab_{\nu,\mu}^\mathrm{skew}$ are homeomorphisms, using the topology
provided by Lemma~\ref{lem:saFred}. As before, we get a well-defined
topology on $\Pfaff P$ by transporting the topology along
\e\label{eqn:topology-pfaff}
\stab_{\mu,0}^\mathrm{skew}
\colon
\Lambdatop V_{[0,\mu]}(P^\dagger P)
\longmapsto
\Lambdatop \Ker P.
\e
\end{dfn}

\begin{prop}\label{prop:properties-pfaff}
The Pfaffian has the following properties:
\begin{enumerate}
\item[\textup{(i)}] \textup{(Functoriality.)}~Let $P_\pm$ be $Y$\nobreakdash-families of skew-adjoint
Fredholm operators. A metric-preserving isomorphism $F^\bu \colon P_- \to P_+$ induces an
isomorphism
\e\label{eqn:pfaffian-functoriality}
	\Lambdatop F^0\colon
	\Pfaff P_- \longrightarrow \Pfaff P_+.
\e
\item[\textup{(ii)}] \textup{(Direct sums.)}~For $Y$\nobreakdash-families of skew-adjoint Fredholm operators $P, Q$ there is
a canonical isomorphism, natural for \eqref{eqn:pfaffian-functoriality},
\e\label{eqn:pfaff-sums}
	\pfaff_{P,Q}\colon \Pfaff P \otimes \Pfaff Q \longrightarrow \Pfaff (P\oplus Q).
\e
These are graded commutative as in \eqref{eqn:det-graded-commutative}.
\item[\textup{(iii)}] \textup{(Root.)}~For a $Y$\nobreakdash-family of skew-adjoint operators there is a
canonical isomorphism, functorial for isomorphisms as in \textup{(i)} with $F^0=F^1$,
\[
	\Pfaff P\otimes \Pfaff P \longrightarrow \det P.
\]
\item[\textup{(iv)}] \textup{(Invertible.)}~For a $Y$\nobreakdash-family of invertible skew-adjoint Fredholm
operators the Pfaffian line bundle has a canonical trivialization.
\end{enumerate}
\end{prop}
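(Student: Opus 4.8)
The plan is to follow the determinant case, Proposition~\ref{prop:properties-det}, almost verbatim; the only genuinely new feature is the preferred volume element $\omega_\lambda\in\Lambdatop V_\lambda(P^\dagger P)$ built into the Pfaffian topology \eqref{eqn:topology-pfaff}, which must be tracked through every construction. For (i), from $F^1P_-=P_+F^0$ one checks that $F^0$ restricts to an isometric isomorphism $\Ker P_-\xrightarrow{\sim}\Ker P_+$, so $\Lambdatop F^0$ is defined; the metric-preservation hypothesis moreover forces $F^0$ to carry the skew forms $\langle P_-\,\cdot,\cdot\rangle$ to $\langle P_+\,\cdot,\cdot\rangle$ eigenspace by eigenspace, hence the $\omega_\lambda$ to one another, so that $\Lambdatop F^0$ intertwines the homeomorphisms $\stab^{\mathrm{skew}}_{\nu,\mu}$ and is continuous exactly as in Proposition~\ref{prop:properties-det}(i). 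Compatibility with composition and the identity is immediate.

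For (ii), I would define $\pfaff_{P,Q}$ by feeding the split short exact sequence $0\to\Ker P\to\Ker(P\oplus Q)\to\Ker Q\to 0$ into the determinant-functor datum \eqref{equation.3.1}; naturality for \eqref{eqn:pfaffian-functoriality} and associativity are then formal, while graded commutativity (the analogue of \eqref{eqn:det-graded-commutative}) holds because interchanging the two subspaces in \eqref{equation.3.1} inserts exactly $(-1)^{\dim\Ker P\cdot\dim\Ker Q}$, matching the degrees of $\Pfaff P$ and $\Pfaff Q$. Continuity is checked on the finite-dimensional models using the orthogonal splitting $V_{[0,\mu]}\bigl((P\oplus Q)^\dagger(P\oplus Q)\bigr)=V_{[0,\mu]}(P^\dagger P)\oplus V_{[0,\mu]}(Q^\dagger Q)$, under which the volume element of $P\oplus Q$ is the product of those of $P$ and $Q$, so that \eqref{equation.3.1} respects $\stab^{\mathrm{skew}}_{\nu,\mu}$.

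For (iii), skew-adjointness gives $P^\dagger=-P$, hence $\Ker P^\dagger=\Ker P$, $PP^\dagger=P^\dagger P$, $\ind P=0$, and $\det P=\Lambdatop\Ker P\otimes(\Lambdatop\Ker P)^*$. I would define the root map $\Pfaff P\otimes\Pfaff P=\Lambdatop\Ker P\otimes\Lambdatop\Ker P\to\det P$ by applying to the second tensor factor the metric identification $\Lambdatop\Ker P\to(\Lambdatop\Ker P)^*$ in the normalization fixed by $\mathcal{P}$ in Section~\ref{ssec:sign-conventions}; functoriality for isomorphisms with $F^0=F^1$ is then clear, since both sides transform by $\Lambdatop F^0$. \textbf{The step I expect to be the main obstacle is continuity}, i.e.\ identifying the Pfaffian topology \eqref{eqn:topology-pfaff} with the determinant topology \eqref{eqn:topology-det} under the root map. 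This reduces to the classical fact that the determinant $\operatorname{det}\bigl(P|_{V_\lambda(P^\dagger P)}\bigr)$ of the skew automorphism induced by $P$ on a nonzero eigenspace — the factor $\Lambda(P)$ appearing in the determinant's $\stab_{\nu,\mu}$ — equals, after the $\mathcal{P}$-normalization, the square of the volume element $\omega_\lambda$ used by $\stab^{\mathrm{skew}}_{\nu,\mu}$ (``determinant of a skew form $=$ Pfaffian squared''), together with bookkeeping of the signs produced by $\mathcal{P}$ and by \eqref{L1L2dual}. Granting this, the root map intertwines the two stabilization systems and is a homeomorphism of line bundles.

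For (iv), an invertible skew-adjoint $P$ has $\Ker P=0$, so $\Pfaff P=\Lambdatop\{0\}$ is canonically $\R$ (resp.\ $\C$) with distinguished element $1$; choosing $\mu$ below the least eigenvalue of $P^\dagger P$ makes $V_{[0,\mu]}(P^\dagger P)=0$, so that \eqref{eqn:topology-pfaff} is the identity on this trivial line and $1$ is a continuous section.
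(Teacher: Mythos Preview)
Your proposal follows the paper's approach, which is itself extremely terse: the paper proves only (i) in any detail and dismisses (ii)--(iv) as ``obvious.'' Your constructions for (ii)--(iv) are correct and more explicit than what the paper provides; in particular your identification of the ``$\operatorname{Pfaffian}^2=\operatorname{determinant}$'' step as the content of (iii) is exactly right.

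There is one inaccuracy in your argument for (i). You assert that metric-preservation of $F^\bu$ forces $F^0$ to carry the skew form $\langle P_-\cdot,\cdot\rangle$ to $\langle P_+\cdot,\cdot\rangle$, hence the $\omega_\lambda$ to one another. But from $F^1P_-=P_+F^0$ and unitarity one gets
\[
\langle P_+F^0v,\,F^0w\rangle=\langle F^1P_-v,\,F^0w\rangle=\langle P_-v,\,(F^1)^{-1}F^0w\rangle,
\]
which equals $\langle P_-v,w\rangle$ only when $F^0=F^1$ on the nonzero eigenspaces. That extra hypothesis is \emph{not} part of the statement of (i) (it appears only in (iii)). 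The paper instead argues that $F^0(P_-^\dagger P_-)(F^0)^{-1}=P_+^\dagger P_+$ (this uses that \emph{both} $F^0,F^1$ are unitary), so $F^0$ restricts to isomorphisms $V_\lambda(P_-^\dagger P_-)\to V_\lambda(P_+^\dagger P_+)$ and $\Lambdatop F^0$ is continuous on the finite models $\Lambdatop V_{[0,\mu]}$. The map this induces via $\stab^{\mathrm{skew}}_{\mu,0}$ differs from $\Lambdatop F^0|_{\Ker}$ only by the scalar $\Lambdatop F^0(\Omega^-)/\Omega^+$, where $\Omega^\pm=\bigwedge_\lambda\omega_\lambda^\pm$; since $\Omega^\pm$ are continuous nowhere-vanishing sections (Pfaffians of continuously varying nondegenerate skew forms on the vector bundles of Lemma~\ref{lem:saFred}), this scalar is continuous and the conclusion follows. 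Your argument is easily repaired along these lines.
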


\begin{proof}
(i)
$F^0$ restricts to a map $V_\lambda(P_-^\dagger P_-)\to V_\lambda(P_+^\dagger P_+)$
and $F^1$ restricts to a map $V_\lambda(P_- P_-^\dagger)\to V_\lambda(P_+ P_+^\dagger)$.
This is because $F^1(P_- P_-^\dagger) (F^1)^{-1} = P_+ P_+^\dagger$
and $F^0(P_-^\dagger P_-) (F^0)^{-1} = P_+^\dagger P_+$.
Hence $F^0, F^1$ can be used to extend $\Lambdatop F^0$ to
the left-hand side of \eqref{eqn:topology-pfaff}, which is therefore continuous.
The condition $F^0 = F^1$ is required for functoriality in (iv). The remaining assertions
are obvious.
\end{proof}

\subsubsection{Pfaffian cover of real skew-adjoint Fredholm operators}

\begin{dfn}
The \emph{Pfaffian cover} of a family $P$ of real skew-adjoint Fredholm operators
is the principal $\Z_2$\nobreakdash-bundle $\pfaff P \coloneqq (\Pfaff P\setminus \{\text{zero section}\})/\R_{>0}$,
regarded as being $\Z_2$\nobreakdash-graded in degree $(-1)^{\dim_\R \Ker P}$.
\end{dfn}

Properties \textup{(i)}--\textup{(iv)} of Proposition~\textup{\ref{prop:properties-pfaff}} hold also for the Pfaffian cover.
In particular, for real skew-adjoint elliptic symbol families the orientation cover
is canonically trivial, since $\orient P = \pfaff P \otimes_{\Z_2} \pfaff P$ by (iii) and
since the square of any double cover is canonically trivial.

\begin{rem}
Since the eigenspaces for small non-zero eigenvalues $\la$ have the symplectic form $\omega_\la$,
they all have even multiplicity. It follows that the spectral flow around a loop of skew-adjoint
Fredholm operators is always even.
\end{rem}

\subsection{Spectral cover of self-adjoint Fredholm operators}
\label{ssec:spectral-cover}

The constructions in the section are taken over the complex numbers, but
they apply equally to real operators by complexifying,
since $\pi_1 \Fred^\mathrm{sa}_\R = \pi_1 \Fred^\mathrm{sa}_\C$.

\subsubsection{Construction}

For a self-adjoint operator $P$ and bounded $I\subset \R\setminus \spec_\mathrm{ess}(P)$
let $N_I(P)$ be the number of eigenvalues $\lambda \in \spec P$ in $I$,
counted with multiplicity. Similarly, sums or products over eigenvalues $\lambda$ are always
taken with multiplicity.

\begin{dfn}\label{dfn:spectral-torsor}
Let $P$ be a self-adjoint Fredholm operator. An element of the \emph{spectral torsor} $\spectral(P)$ is
represented by a pair $(\mu,n)$ of $\mu > 0$ and $n\in \Z$, where $\pm \mu \notin \spec(P)$
and $[-\mu,\mu]\cap \spec_\mathrm{ess}(P) = \emptyset$.
Here we regard $(\mu, n) \sim (\nu, m)$ for $0<\mu<\nu$ as equivalent if $m-n = N_{]\mu,\nu[}(P)$.
\end{dfn}

\begin{dfn}
Let $P$ be a $Y$\nobreakdash-family of self-adjoint Fredholm operators. The disjoint union
over all $\spectral(P_y)$ is topologized as follows.
Let $y_0 \in Y$ and pick $\delta > 0$ with
${\spec_\mathrm{ess}(P_{y_0}) \cap (-\delta,\delta) = \emptyset}$.
Suppose $\delta > \mu > 0$, $\pm \mu \notin \spec P_{y_0}$. As
in Lemma~\textup{\ref{lem:saFred}} this remains true in a neighborhood $U$ of $y_0$.
For each $n \in \Z$ the map $y\mapsto [\mu,n]$ is, by definition, a continuous section
$U\to \spectral(P)|_U$.
\end{dfn}

The following properties are fairly obvious from the definition:

\begin{prop}\label{prop:properties-spectral}
The spectral cover has the following properties:
\begin{enumerate}
\item[\textup{(i)}] \textup{(Functoriality.)}~Let $P_\pm$ be $Y$\nobreakdash-families of
self-adjoint Fredholm operators. A self-adjoint isomorphism
$F^\bu \colon P_- \to P_+$ induces an isomorphism
\e\label{eqn:spectral-functorial}
	\spectral P_- \longrightarrow \spectral P_+.
\e
\item[\textup{(ii)}] \textup{(Direct sums.)}~For $Y$\nobreakdash-families of self-adjoint Fredholm operators $P, Q$ there
is a canonical isomorphism
\e\label{eqn:spectral-direct-sums}
	\spectral(P)\otimes \spectral(Q) \longrightarrow \spectral(P \oplus Q).
\e
These are natural for \eqref{eqn:spectral-functorial} and commutative
as in \eqref{eqn:det-graded-commutative} without sign.
\item[\textup{(iii)}] \textup{(Negative.)}~There is a canonical isomorphism $\spectral(-P) \to \spectral(P)^*$,
natural for \eqref{eqn:spectral-functorial} and compatible with \eqref{eqn:spectral-direct-sums}.
\item[\textup{(iv)}] \textup{(Invertible.)}~Let $P$ be a family of invertible self-adjoint operators.
Then $\spectral(P)$ is canonically trivial. In particular, for positive definite families.
\item[\textup{(v)}] The monodromy of the covering $\spectral(P)$ is the spectral flow around loops.
\end{enumerate}
\end{prop}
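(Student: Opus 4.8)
The plan is to read off all five properties directly from Definition~\ref{dfn:spectral-torsor}. The whole proof rests on three elementary facts about the counting function $N$: for a bounded interval $I$ inside the essential spectral gap one has $N_I(P\oplus Q)=N_I(P)+N_I(Q)$ and $N_{]\mu,\nu[}(-P)=N_{]-\nu,-\mu[}(P)$, and, whenever $0<\mu<\nu$ with $\pm\mu,\pm\nu\notin\spec(P)$, one has $N_{]\mu,\nu[}(P)+N_{]-\nu,-\mu[}(P)=N_{]-\nu,\nu[}(P)-N_{]-\mu,\mu[}(P)$, all quantities being finite because of the gap. Together with the local continuous sections $y\mapsto[\mu,n]$ furnished by Lemma~\ref{lem:saFred}, these reduce everything to bookkeeping.

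For (i), a self-adjoint isomorphism $F=F^0=F^1$ conjugates $P_-$ to $P_+$; hence it restricts to linear isomorphisms of eigenspaces $V_\lambda(P_-)\to V_\lambda(P_+)$ and induces an isomorphism of Calkin algebras, so $\spec_{\mathrm{ess}}(P_-)=\spec_{\mathrm{ess}}(P_+)$ and $N_I(P_-)=N_I(P_+)$ for all $I$. Thus a pair $(\mu,n)$ is admissible for $P_-$ exactly when it is for $P_+$, with the same equivalence relation, so $[\mu,n]\mapsto[\mu,n]$ is the desired isomorphism \eqref{eqn:spectral-functorial}; it carries the local sections of Lemma~\ref{lem:saFred} to one another, hence is continuous, and $(FG)^*=G^*F^*$, $\id^*=\id$ are immediate. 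For (ii), after refining to a common admissible threshold $\mu$ one sets $[\mu,n]\otimes[\mu,m]\mapsto[\mu,n+m]$; additivity of $N$ shows this is well defined and takes local sections to local sections, so \eqref{eqn:spectral-direct-sums} is a continuous isomorphism, and associativity, naturality for (i), and commutativity — with no sign, since $\spectral$ lies in degree $0$ — are clear.

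For (iii), define $\spectral(-P)\to\spectral(P)^*$ on representatives by $(\mu,n)\mapsto(\mu,\,N_{]-\mu,\mu[}(P)-n)$, the target being read in the dual $\Z$-torsor (the same set of pairs, with the $\Z$-action precomposed with $k\mapsto -k$); the finite shift $N_{]-\mu,\mu[}(P)$ is present precisely because negation reflects the spectrum near $0$ about $0$. That this map respects the two equivalence relations and is $\Z$-equivariant for the flipped action follows from $N_{]\mu,\nu[}(-P)=N_{]-\nu,-\mu[}(P)$ together with the third identity above; it matches local sections, so it is continuous, it is natural for (i) because $N_{]-\mu,\mu[}$ is conjugation invariant, and it is additive over $\oplus$, hence compatible with \eqref{eqn:spectral-direct-sums}. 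For (iv), if $P$ is invertible then $0\notin\spec(P)$, every sufficiently small $\mu$ is admissible with $N_{]-\mu,\mu[}(P)=0$, and all the elements $[\mu,0]$ coincide; in a family this is a global continuous section, i.e.\ a trivialization (the positive-definite case being a special case), and one checks that it matches the canonical trivialization of $\spectral(P)^*$ under (iii) — which is what pins down the normalization chosen in (iii).

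For (v), by the same argument that computes the monodromy of the determinant cover, one covers $S^1$ by subintervals on each of which a single threshold $\mu$ is admissible — where the section $[\mu,n]$ is locally constant — and sums the transition terms $\pm N_{]\mu,\mu'[}(P_t)$ over the overlaps; a commuting-square/telescoping argument with the locally constant total counts identifies this integer with the spectral flow of Phillips~\cite{Phi}. I expect the only genuine care to lie in (a) fixing the normalization in (iii) so that it is at once compatible with \eqref{eqn:spectral-direct-sums}, with (iv), and — downstream — with the transgression of the determinant line in Theorem~\ref{thm:sa-generalFred}, and in (b) matching the sign and counting conventions in (v) with \cite{Phi}; the remaining checks are the routine verifications the statement advertises.
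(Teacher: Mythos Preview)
Your proposal is correct and essentially matches the paper's approach: the paper gives no proof at all, prefacing the proposition with ``The following properties are fairly obvious from the definition,'' and your argument carries out exactly that, reading each item off Definition~\ref{dfn:spectral-torsor} via the elementary additivity and reflection properties of the eigenvalue counting function $N_I$. Your explicit formulas for (ii), (iii), (iv) and the well-definedness checks are the natural ones and are sound; the only point worth flagging is that in (i) you should note that conjugation by an invertible (not necessarily unitary) $F$ still preserves spectra, essential spectra, and eigenvalue multiplicities, which is all that is needed --- you implicitly use this but it is worth saying, since the paper's notion of isomorphism in Definition~\ref{dfn:iso-of-operators} does not require $F$ to be unitary.
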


\begin{rem}
Let $P$ be a first order elliptic differential operator of Dirac type.
Then the spectral torsor can also be obtained by a reduction of
$\xi(P)\coloneqq \frac12(\eta(P,0) + \dim \Ker P)$ modulo integers.
Recall here that
\[
	\eta(P,s) \coloneqq \sum_{0\neq \lambda \in \spec(P)}
	\frac{\operatorname{sgn}(\lambda)}{|\lambda|^s},
	\qquad
	\Re\mathrm{e}(s) > \dim X,
\]
is the $\eta$\nobreakdash-invariant of $P$, continued analytically to $s=0$ as in Atiyah--Patodi--Singer~\cite{AtPaSi1}. A canonical isomorphism of $\Z$\nobreakdash-torsors is given by
\[
	\spectral(P)
	\longrightarrow
	\xi(P) + \Z\subset \R,
	\quad
	[\mu,n]
	\longmapsto 
	\xi(P) + n - N_{[0,\mu[}(P).
\]
It is continuous, since both $\xi(P)$, $N_{[0,\mu[}(P)$ jump according to the spectral flow.
\end{rem}

\subsubsection{Orientations graded over $\Z_n$}\label{gradedZn}

Let $n\in \N$, $P$ be a $Y$\nobreakdash-family of self-adjoint Fredholm operators, and
$\sigma$ an $\Z_n$\nobreakdash-orientation of this family, meaning a section
$\sigma \colon Y \to \spectral(P)\otimes_\Z \Z_n$. Suppose each $P_y$ is invertible,
which is the expected generic situation under perturbation.
For example, $Y$ could be a moduli space of connections that satisfy
a non-linear elliptic equation whose linearization is self-adjoint.

We have two elements of $\spectral(P_y)\otimes_\Z \Z_n$, namely $\sigma(y)$
and the trivialization of Proposition~\ref{prop:properties-spectral}(iv) and these
differ by $\mu(y) \in \Z_n$, which may be used as weights for counting.
When $Y$ is finite such numbers $p=\mu(y)$ can, given also a finite set of
`preferred' trajectories, be used to define chain groups $C^p$ graded over $p\in \Z_n$
in the style of Floer \cite{Floe}.

\begin{ex}
Let $P \to S^7$ be an $\SU(4)$\nobreakdash-bundle. Let $D$ be the real Diracian on $S^7$,
a self-adjoint operator. Let $D^{\nabla_{\Ad P}}$ be the Dirac operator twisted
by a connection $\nabla_P \in \mathcal{A}_P$ on $P$.
The spectral cover of this $\mathcal{A}_P$\nobreakdash-family is then a $\Z$\nobreakdash-cover of $\mathcal{A}_P$,
which is trivial since the base space is contractible. Any trivialization descends to
a trivialization of $\spectral(D^{\nabla_{\Ad P}})\otimes \Z_8$ over the
configuration space of connections modulo gauge. This is because every gauge
transformation acts by a multiple of $8$. To see this, let $g \in \Aut(P)$ and
pick a path of connections $\nabla^t_{P}$ from $\nabla_P$ to $g^*\nabla_P$.
The mapping torus bundle $Q \to S^7 \times S^1$, obtained by identifying endpoints
of $P\times [0,1]$ using $\phi$, can be used to calculate the spectral flow
$\SF\{D^{\nabla^t_{\Ad P}}\}_{t \in [0,1]}$ as in Walpuski~\cite{Walp3}, based
on Atiyah--Patodi--Singer~\cite{AtPaSi3}:
\[
	\ind D^{\Ad Q}
	=
	\int_{S^7\times S^1}
	\frac{5}{3} c_2(Q)^2 - \frac{4}{3}c_4(Q) + \frac{1}{3} c_2(Q)p_1(S^7)
	=
	-\frac{4}{3}\int_{S^7\times S^1} c_4(Q).
\]
The Euler number $\int_{S^7\times S^1} c_4(Q)$ of any $\SU(4)$\nobreakdash-bundle $Q \to S^7\times S^1$
is divisible by six.
This can be seen as follows. Represent $Q$ by gluing two trivial $\SU(4)$\nobreakdash-bundles along the
equator $S^7 \times \{1/2\}$ using a map $f\colon S^7 \to \SU(4)$.
By counting zeros of a section constructed from $f$
%
the Euler number of $Q$ is seen to be
equals to the degree of $f$ composed with the projection ${\pi\colon \SU(4) \to \SU(4)/\SU(3)=S^7}$.
But from the long exact sequence of homotopy groups of a fibration
\[\xymatrix{
	\pi_7(\SU(4))\ar[r]^{\pi_*}
	&
	\pi_7(S^7)=\Z\ar[r]^-{\partial}
	&
	\pi_6(\SU(3))=\Z_6\ar[r]
	&
	\pi_6(\SU(4))=0
}\]
the image of $\pi_*$ is $6\Z$, so the image of $(\pi\circ f)_* = \pi_* \circ f_*$ is also
divisible by $6$.
\end{ex}

\subsection{Transgression and the spectral cover}

We now establish a relation between spectral torsors and the
orientation cover.

\begin{dfn}
The \emph{transgression} of a complex line bundle ${L \to X}$ is the
principal $\Z$\nobreakdash-bundle ${S\to \Map(S^1,X)}$ whose fibers
\[
	S_\ga \coloneqq \left\{  \tau\colon S^1 \longrightarrow \gamma^*L\setminus \{0\} \right\} / \simeq,
	\qquad \forall \ga \in \Map(S^1,X),
\]
are the homotopy classes of trivializations along $\ga$. The $\Z$\nobreakdash-action
${e^{2\pi int}\tau(t)}$ is freely transitive, since any two $\tau$ differ by the mapping
degree of some ${S^1 \to \C^*}$.
To define the topology on $S$, let $\mathcal{U} \subset \Map(S^1, X)$ be an open set.
Then a continuous section $\tau\colon S^1 \times \mathcal{U} \to L\setminus \{0\}$
with $\tau(t,\ga) \in L_{\ga(t)}$ determines, by definition, a continuous
section $\mathcal{U} \to S, \ga \mapsto [\tau(\cdot,\ga)]$ of the covering $S$.
\end{dfn}

\begin{lem}\label{lem:Det-mu-trivial}
For a self-adjoint Fredholm operator $A_0\colon H \to H$, let 
$\delta > \mu > 0$, $\pm \mu \notin \spec A_0$, $\nu=-\mu$, and $U$ be a neighborhood
of $A_0$ in the self-adjoint Fredholm operators as in Lemma~\textup{\ref{lem:saFred}}.
Then the restriction of the determinant line bundle to $U$ is trivial.
\end{lem}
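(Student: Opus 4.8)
The plan is to show that over the neighbourhood $U$ the determinant line bundle is, via the very stabilization isomorphism used to topologize it, the endomorphism bundle of a single honest finite-rank vector bundle, and that the endomorphism bundle of any line bundle is canonically trivial.

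First I would observe that the number $\mu^2$ is ``sufficiently small for $A_0$'' in the sense of Section~\ref{ssec:Determinant}: since $A_0$ is self-adjoint, $A_0^\dagger A_0 = A_0 A_0^\dagger = A_0^2$, and $\spec_{\mathrm{ess}}(A_0^2)=\{\lambda^2:\lambda\in\spec_{\mathrm{ess}}(A_0)\}\subset[\delta^2,\infty)$, so $[0,\mu^2]\cap\spec_{\mathrm{ess}}(A_0^\dagger A_0)=\emptyset$. Shrinking $U$ as in the construction of $\det P$, the topology on $\det A|_U$ is then transported along the stabilization isomorphism~\eqref{eqn:topology-det} with ``$\mu$'' replaced by $\mu^2$ and $P$ by $A$,
\[
\stab_{\mu^2,0}\colon \Lambdatop V_{[0,\mu^2]}(A^\dagger A)\otimes\bigl(\Lambdatop V_{[0,\mu^2]}(AA^\dagger)\bigr)^{*}\longrightarrow \det A|_U.
\]

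Next I would identify the two exterior powers appearing on the left. For self-adjoint $A$ with $\pm\mu\notin\spec A$, which holds on $U$ by Lemma~\ref{lem:saFred}(i), an eigenvalue $\lambda$ of $A$ has $\lambda^2\in[0,\mu^2]$ iff $\lambda\in(-\mu,\mu)=(\nu,\mu)$; hence $V_{[0,\mu^2]}(A^\dagger A)=V_{[0,\mu^2]}(A^2)=V_{(\nu,\mu)}(A)=V_{[0,\mu^2]}(AA^\dagger)$ is one and the same finite-dimensional subspace $W_A\subset H$. The spectral projection onto $W_A$ used to topologize $V_{[0,\mu^2]}(A^2)$ in the construction of $\det P$ coincides with $\chi_{(\nu,\mu)}(A)$ and depends continuously on $A\in U$ by continuity of the functional calculus; hence $\{W_A\}_{A\in U}$ with this topology is precisely the vector bundle $W\to U$ supplied by Lemma~\ref{lem:saFred}(ii), and $\stab_{\mu^2,0}$ yields an isomorphism of topological line bundles $\det A|_U\cong\Lambdatop W\otimes(\Lambdatop W)^{*}$.

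Finally I would conclude: $\Lambdatop W\otimes(\Lambdatop W)^{*}=\End(\Lambdatop W)$, and the endomorphism bundle of any line bundle is canonically trivialized by the nowhere-vanishing continuous section $y\mapsto\id$; transporting it back through $\stab_{\mu^2,0}$ trivializes $\det P|_U$. I do not expect a genuine obstacle here; the one step deserving a sentence of care is that the bundle topology on $V_{[0,\mu^2]}(A^\dagger A)$ entering the definition of $\det P$ agrees with the Lemma~\ref{lem:saFred}(ii)-topology on $V_{(\nu,\mu)}(A)$, which is exactly the functional-calculus remark just made. Heuristically, the assertion is that there is no spectral flow over $U$, hence no monodromy for $\det P$.
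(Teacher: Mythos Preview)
Your argument is correct. It differs from the paper's proof, which instead considers the $U\times[0,1]$-family $A_t\coloneqq A-t\mu$: at $t=1$ the operator $A-\mu$ is invertible on $U$ (since $\mu\notin\spec A$ there), so $\det(A-\mu)$ is canonically trivial by Proposition~\ref{prop:properties-det}(v), and the restrictions of a line bundle over $U\times[0,1]$ to the two ends are isomorphic.

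Your route avoids the deformation entirely by observing that self-adjointness forces the two factors in the stabilized model $\Lambdatop V_{[0,\mu^2]}(A^\dagger A)\otimes\bigl(\Lambdatop V_{[0,\mu^2]}(AA^\dagger)\bigr)^*$ to be the top exterior power of the \emph{same} vector bundle $W=V_{(\nu,\mu)}(A)$ over $U$, so the model is $\End(\Lambdatop W)$, trivialized by the identity section. This is more explicit and arguably more elementary; it also makes transparent why the trivialization depends on the choice of $\mu$ (through $W$), which is exactly how the lemma is used downstream in the proof of Theorem~\ref{thm:sa-generalFred}. The paper's deformation argument is shorter to state and stays closer to the general philosophy of the paper (reduce to invertible operators via homotopy), but yours requires no appeal to homotopy invariance of line bundles. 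One small remark: no actual shrinking of $U$ is needed, since the hypotheses of Lemma~\ref{lem:saFred} for $A$ with cutoff $\mu$ already guarantee $\mu^2\notin\spec(A^2)$ and $\spec_{\mathrm{ess}}(A^2)\cap[0,\mu^2]=\emptyset$ throughout $U$.
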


\begin{proof}
Consider the $Y=U\times [0,1]$\nobreakdash-family $A_t\coloneqq A - t\mu$.
By Lemma~\ref{prop:properties-det}(v) the determinant line bundle of this family
is canonically trivial at $t=1$ and the restrictions of the bundle to either endpoints
of $[0,1]$ are isomorphic.
\end{proof}

\begin{thm}\label{thm:sa-generalFred}
Under the equivalence of Atiyah--Patodi--Singer \textup{\cite{AtSiSkew}}, \textup{\cite{AtPaSi3}},
\[
\Fred^\mathrm{sa}_\C \overset{\simeq}{\longrightarrow} \Omega \Fred_\C,\quad
P \longmapsto P_t\coloneqq\begin{cases}
\cos(t)+i\sin(t)P & (0\leq t \leq \pi),\\
\cos(t)+i\sin(t) & (\pi \leq t \leq 2\pi),
\end{cases}
\]
we have that the spectral cover $\spectral(P)$ is canonically isomorphic to the transgression ${S \to \Omega \Fred_\C}$
of the complex determinant line bundle $\det P_t$.
\end{thm}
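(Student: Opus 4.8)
The plan is to construct the isomorphism $\spectral(P) \to S_{\{P_t\}}$ explicitly on representatives and then check continuity. Recall an element of $\spectral(P)$ is represented by a pair $(\mu,n)$ with $\pm\mu\notin\spec P$ and $[-\mu,\mu]\cap\spec_{\mathrm{ess}}(P)=\emptyset$. Under the Atiyah--Patodi--Singer path $\{P_t\}_{t\in[0,2\pi]}$, I first note that $P_t=\cos t + i\sin t\cdot P$ fails to be invertible exactly when $\cos t + i\sin t\cdot\lambda = 0$ for some $\lambda\in\spec P$, i.e.\ never for $0<t<\pi$ unless $\lambda$ is real (which it is, as $P$ is self-adjoint) and $\cot t = -\lambda$; so $P_t$ is invertible for all $t$ with $\cot t \notin \spec P$, and in particular on the segment $\pi\le t\le 2\pi$ it is $\cos t + i\sin t$, always invertible. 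Hence over the loop $\{P_t\}$ the determinant line $\det P_t$ is canonically trivial (Proposition~\ref{prop:properties-det}(v)) on the arc where $P_t$ is invertible, and the only ``event'' is the finite set of parameters $t$ in $(0,\pi)$ where an eigenvalue $\lambda$ crosses $-\cot t$. A trivialization $\tau$ of $\det P_t$ along the whole loop is therefore determined, up to the $\Z$-action $e^{2\pi i n t}\tau$, by the way it is glued across these crossings.

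Second, I would make the dictionary precise. Choose $\mu$ as in the representative $(\mu,n)$ and consider the eigenvalues $\lambda\in\spec P$ with $|\lambda|<\mu$, together with the (finitely many) eigenvalues in $[-\mu,\mu]$; these are exactly the $\lambda$ for which the crossing parameter $t_\lambda=\operatorname{arccot}(-\lambda)$ lies in a controlled subinterval of $(0,\pi)$. Using Lemma~\ref{lem:saFred} and Lemma~\ref{lem:Det-mu-trivial}, on a neighborhood $U$ of $P$ the finite-rank bundle $V_{[-\mu,\mu]}$ trivializes $\det P_t$ locally, and the canonical ``evaluation'' basepoint $o_{\mathrm{taut}}$ of the determinant line of a self-adjoint operator (Remark after Proposition~\ref{prop:properties-det}) gives a preferred section away from crossings. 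The integer $n$ in $(\mu,n)$ then specifies precisely which homotopy class of global trivialization $\tau$ along $\{P_t\}$ one takes: starting from the canonical trivialization on the invertible arc $[\pi,2\pi]$, extend backwards through $t=\pi$ using $o_{\mathrm{taut}}$, pick up a sign / monodromy contribution at each eigenvalue crossing governed by the spectral flow (cf.\ the Remark computing $o_{\mathrm{taut},P_t}$ in terms of $\SF$), and declare the resulting class shifted by $n$. This assignment $(\mu,n)\mapsto[\tau_{\mu,n}]$ respects the equivalence relation: changing $\mu$ to $\nu>\mu$ changes $n$ by $N_{]\mu,\nu[}(P)$ on the $\spectral$ side, and on the $S$ side passing an extra eigenvalue crossing changes the homotopy class of the trivialization by exactly the corresponding winding number, so the two relations match.

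Third, continuity and $\Z$-equivariance. The map intertwines the $\Z$-actions essentially by construction: $n\mapsto n+1$ on $\spectral(P)$ corresponds to $\tau\mapsto e^{2\pi i t}\tau$ under transgression, since a full extra loop of $S^1\to\C^*$ is precisely one unit of $N$. For continuity, it is enough to check it on the standard open sections: over the neighborhood $U$ from Lemma~\ref{lem:saFred}, $y\mapsto[\mu,n]$ is by definition a continuous local section of $\spectral(P)$, and the corresponding trivialization $\tau(t,P')$ of $\det P'_t$ built from the continuously varying finite-rank spectral bundle $V_{[-\mu,\mu]}(P')$ is a continuous section $S^1\times U\to \det P'_t\setminus\{0\}$, hence by the definition of the topology on $S$ gives a continuous section $U\to S$. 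Since both topologies are defined by declaring exactly these families of local sections continuous, the bijection is a homeomorphism, hence an isomorphism of principal $\Z$-bundles.

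The main obstacle I expect is \emph{bookkeeping of the crossing contributions}: one must verify that the increment in the homotopy class of the trivialization of $\det P_t$ as a single eigenvalue $\lambda$ of $P$ crosses $-\cot t$ in the APS family equals $+1$ (with the correct sign, consistent with the sign conventions of Section~\ref{ssec:sign-conventions} and the evaluation-on-the-left rule), matching the contribution $N_{]\mu,\nu[}$ to the equivalence relation defining $\spectral(P)$. Concretely this means analyzing, near such a $t$, the $1\times 1$ block $\cos t + i\sin t\,\lambda_{t}$ and computing the winding of its determinant as $t$ sweeps across the crossing; the $\Z$-valued (rather than merely $\Z_2$-valued) nature of the answer is exactly what distinguishes the spectral cover from the real determinant cover, and getting the orientation of this winding right — rather than off by a sign or by a global shift — is the delicate point. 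Everything else (the invertibility analysis of $P_t$, the local triviality from Lemma~\ref{lem:saFred} and Lemma~\ref{lem:Det-mu-trivial}, the compatibility with the APS equivalence $\Fred^{\mathrm{sa}}_\C\simeq\Omega\Fred_\C$) is routine given the earlier results.
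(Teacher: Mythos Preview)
Your invertibility analysis of $P_t$ contains a genuine error that undermines the entire ``crossing'' picture. For $t\in(0,\pi)$ and $\lambda\in\R$ (as $P$ is self-adjoint), the equation $\cos t + i\sin t\cdot\lambda=0$ forces the real and imaginary parts to vanish separately, so $\cos t=0$ and $\lambda\sin t=0$, hence $t=\pi/2$ and $\lambda=0$. There is no family of crossing times $t_\lambda=\operatorname{arccot}(-\lambda)$; the operator $P_t$ is invertible for \emph{all} $t\neq\pi/2$ (and even at $\pi/2$ if $P$ is invertible). Consequently the canonical trivialization $\operatorname{det}(P_t)=1\otimes 1^*$ of Proposition~\ref{prop:properties-det}(v) exists on all of $[0,2\pi]\setminus\{\pi/2\}$, and your mechanism of accumulating integer contributions ``at each eigenvalue crossing'' as $t$ varies simply does not occur.

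The paper's proof locates the integer elsewhere. It computes $\spec(P_t^\dagger P_t)=\{\cos^2 t+\sin^2 t\,\lambda^2:\lambda\in\spec P\}$ and identifies $V_{\cos^2 t+\sin^2 t\,\lambda^2}(P_t^\dagger P_t)=V_{\lambda^2}(P^2)$, then uses the very stabilization maps \eqref{eqn:topology-det} that \emph{define} the topology on $\det$ to write down explicit isomorphisms $\phi_t(\mu)\colon\det P_t\to\det P$. The key computation is that the canonical element $\operatorname{det}(P_t)$, transported through $\phi_t(\mu)$, rotates by the phase $\prod_{0<|\lambda|<\mu}(\lambda^{-1}\cos t+i\sin t)$ as $t$ runs over $[0,\pi]$; each factor contributes a half-turn, whence the periodicity $\gamma_\mu(0)=(-1)^{N_{]-\mu,\mu[}(P)}\gamma_\mu(\pi)$ and the standard form $e^{i(-N_{]-\mu,\mu[}(P)+2n)t}$ for trivializations. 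The integer winding thus comes from the \emph{simultaneous} half-rotation of all small eigenvalues as $t$ sweeps $[0,\pi]$, not from a sequence of separate crossings. Your proposed check of the equivalence relation (changing $\mu$ to $\nu$) then becomes the first displayed formula in the paper's proof, comparing $\phi_t(\nu)$ with $\phi_t(\mu)$ via the product over $\mu<|\lambda|<\nu$, rather than a count of additional crossing events.
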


Being homotopy equivalent, any two connected $\Z$\nobreakdash-coverings of
$\Fred^\mathrm{sa}_\C$ are clearly isomorphic, but a canonical isomorphism
fixes an integer choice here.

\begin{proof}
Recall as in Lemma~\ref{lem:saFred} that near zero all eigenvalues of a self-adjoint
Fredholm operator $P$ are discrete with finite-dimensional eigenspaces. By the
spectral theorem, the eigenvalues of the normal operator $P_t^\dagger P_t$ for $t \in [0,\pi]$
are ${\lambda(t)\coloneqq\cos^2(t) + \sin^2(t)\lambda^2}$, where $\lambda$ ranges over the spectrum of $P$.
In particular, $P_t$ is invertible
unless $t=\pi/2$. Moreover, for $|\lambda|$ small we have
\e\label{eigenspaces-Pt}
	V_{\cos^2(t) + \sin^2(t)\lambda^2}\left(P_t^\dagger P_t\right) = V_{\lambda^2}(P^2) = V_{\lambda}(P) \oplus V_{-\lambda}(P).
\e
(one of the spaces on the right can be trivial.)
This determines a map $\phi_t(\mu)$
\begin{align*}
\det P_t
&\overset{\eqref{eqn:topology-det}}{\longrightarrow}
\Lambdatop V_{[0,\mu(t)[}(P_t^\dagger P_t)
\otimes
\left(\Lambdatop V_{[0,\mu(t)[}(P_tP_t^\dagger)\right)^*\\
&\overset{\eqref{eigenspaces-Pt}}{=}\Lambdatop V_{[0,\mu[}(P^\dagger P)
\otimes \left(\Lambdatop V_{[0,\mu[}(PP^\dagger)\right)^*\overset{\eqref{eqn:topology-det}}{\longrightarrow} \det P
\end{align*}
for $t \in [0,\pi]$ and $\mu>0$ sufficiently small, $\pm \mu \notin \spec P$. Since $P_t$
is invertible,
$\operatorname{det} P_t \coloneqq 1\otimes 1^* \in \det P_t$ is
a canonical trivialization. Unravelling the definition of \eqref{eqn:topology-det}, it is
easy to check the formulas
\begin{align}
\phi_t(\nu) &=
	\prod_{
	\begingroup\def\arraystretch{0.5}
	\begin{array}{c}
	\scriptstyle
	\lambda \in \spec P\\
	\scriptstyle
	\mu<|\lambda|<\nu
	\end{array}
	\endgroup
	}(\lambda^{-1}\cos t + i\sin t) \cdot \phi_t(\mu),\nonumber\\
\phi_t(\mu) \left( \operatorname{det} P_t \right) &= 
\prod_{
	\begingroup\def\arraystretch{0.5}
	\begin{array}{c}
	\scriptstyle
	\lambda \in \spec P\\
	\scriptstyle
	0<|\lambda|<\mu
	\end{array}
	\endgroup
	}
	\frac{\lambda^{-1}\cos(t) + i \sin(t)}{\lambda^{-1}\cos(s) + i \sin(s)}
	\cdot
	\phi_s(\mu) \left( \operatorname{det} P_s \right).\label{phimu2}
\end{align}
By deforming, any element of the transgression $S_{t\mapsto P_t}$ can be represented by
a map ${t\mapsto \tau(t) \in \det P_t}$ with $\tau(t) = \operatorname{det} P_t$
for $t\in [0,2\pi] \setminus ]0,\pi[$. We can thus represent
these by maps
$\ga_\mu(t)=\phi_\mu(t) \circ \tau(t)$, $\ga_\mu\colon [0,\pi] \to \det P \setminus \{0\}$ satisfying by \eqref{phimu2} the periodicity condition $\ga_\mu(0) = (-1)^{N_{]-\mu,\mu[}}\cdot \ga_\mu(\pi)$, modulo homotopy through such maps.
By Lemma~\ref{lem:Det-mu-trivial} the choice of $\mu$ determines also a trivialization $\Phi\colon \C \to \det P$. In this trivialization we can describe $\ga_\mu$ up to homotopy by its mapping degree.
In other words, we can put any $\ga_\mu$ into standard form $e^{i \left( -N_{]-\mu,\mu[}(P) + 2n\right) t}$. The sought-for isomorphism is then
\[
	\spectral P
	\longrightarrow
	S_{t\mapsto P_t},
	\quad
	[n,\mu]
	\longmapsto
	\left[
		\phi_t(\mu)^{-1}\circ\Phi\left(e^{i \left( -N_{]-\mu,\mu[}(P) + 2n\right) t}\right)
	\right].
\]
Another choice of trivialization differs by multiplication by some $\ka \in \C\setminus \{0\}$
which does not effect the homotopy class on the right. It is independent of $\mu$ by \eqref{phimu2},
where we can use also the homotopic $\exp\left( \left[N_{]\mu,\nu[}(P) - N_{]-\nu,-\mu[}(P) \right] \cdot it\right)$ as prefactor.
According to Lemma~\ref{lem:Det-mu-trivial} we can perform all this over open
sets $U$, which proves continuity of the isomorphism there.
\end{proof}

\begin{rem}
Using this result one may alternatively reduce Theorem~\ref{main-theorem} for $\general = \spectral$ to
the case $\general = \orient$.
\end{rem}

\subsection{Proof of Theorem~\ref{thm:symbol-calculus}}\label{ssec:proof-calc}

\subsubsection{The covering of an elliptic symbol family}\label{sssec:covering-symbol}

Let $\general \in \{\orient, \pfaff, \spectral\}$. By Propositions
\ref{prop:properties-det}, \ref{prop:properties-pfaff}, and \ref{prop:properties-spectral}
we can assign to every $Y$\nobreakdash-family of Fredholm operators $P$ of type {\ITEMgeneral} a $\Z_2$\nobreakdash-graded
$G$\nobreakdash-principal-bundle $\general(P) \to Y$ with the degree and structure group $G$
given by \eqref{eqn:degree}.

\begin{dfn}
A \emph{deformation} $P=\{P_t\}_{t\in [0,1]}$ of type {\ITEMgeneral}
is a $Y\times [0,1]$\nobreakdash-family of Fredholm operators of type {\ITEMgeneral}.
Then $P$ determines a covering of $Y \times [0,1]$ in which
fiber transport gives an isomorphism
\e\label{eqn:orient-deformations}
	\general \{P_t\}_{t\in [0,1]}\colon
	\general (P_0)
	\longrightarrow
	\general (P_1).
\e
\end{dfn}

By general properties of covering maps, \eqref{eqn:orient-deformations}
depends on $\{P_t\}_{t\in [0,1]}$ only up to homotopy through operators
of type {\ITEMgeneral} relative endpoints $\{0,1\}$ and is functorial for
the juxtaposition of deformations. Moreover, a
$[0,1]$\nobreakdash-deformations of isomorphisms $F^\bu_t\colon P_t \to Q_t$
as in Definition~\textup{\ref{dfn:iso-of-operators}} induces a
commutative diagram
\[
\xymatrix{
\general(P_0)\ar[d]_{\general(F^\bu_0)}\ar[r]_{\eqref{eqn:orient-deformations}}&\general(P_1)\ar[d]^{\general(F^\bu_1)}\\
\general(Q_0)\ar[r]^{\eqref{eqn:orient-deformations}}&\general(Q_1)
}\]
using the functoriality defined in part (i) of Propositions \ref{prop:properties-det}, \ref{prop:properties-pfaff}, and
\ref{prop:properties-spectral}. Moreover, the naturality of parts (ii) and (iii)
shows that \eqref{eqn:orient-deformations} is compatible in the obvious sense with direct
sums and adjoints.

\begin{dfn}
Let $p \in \Ell_Y^m(X;E^0,E^1)$ be a 
$Y$\nobreakdash-family of elliptic symbols of type {\ITEMgeneral} over a manifold $X$.
Suppose $X$ is compact or that $m=0$ and $p$ is supported in some $L\times Y$ with $L \subset X$ compact.
The set $\sigma^{-1}(p)_{(\general)} \subset \PsiDO_Y^m(X; E^0, E^1)$ of
compactly supported $Y$\nobreakdash-families of $\PsiDO$s $P$ of type {\ITEMgeneral} with principal symbol $p$ is non-empty and
convex, by Theorem~\ref{thm:Fredholm-Properties}(iii). 
By Lemma~\ref{lem:convex} the straight line between
$P_0, P_1 \in \sigma^{-1}(p)_{(\general)}$
is a $Y\times [0,1]$\nobreakdash-family $P$ in $\sigma^{-1}(p)_{(\general)}$ of compactly supported
operators.
Then \eqref{eqn:orient-deformations} defines an isomorphism
${t_{P_0, P_1}\colon \general (P_0) \to \general (P_1)}$. 
For $P_0, P_1, P_2 \in \sigma^{-1}(p)_{(\general)}$
we have $t_{P_1, P_2} \circ t_{P_0, P_1} = t_{P_0, P_2}$.
The limit over these is the set of compatible families:
\[
	\general (p) \coloneqq
	\left\{
		\left(x_P \in \general (P)\right)_{P \in \sigma^{-1}(p)_{(\general)}}
		\;\middle|\;
		\forall P_0, P_1 \in \sigma^{-1}(p)_{(\general)}:
		t_{P_0, P_1} \left( x_{P_0} \right) = x_{P_1}
	\right\}.
\]
Of course, for any $P \in \sigma^{-1}(p)_{(\general)}$ the projection defines an isomorphism
\[
	\operatorname{ev}_P
	\colon
	\general(p) \longrightarrow \general(P).
\]
In particular, $\general(p)$ has a canonical topology of $G$\nobreakdash-principal bundle.
\end{dfn}

\subsubsection{General properties}

We now summarize the general properties of $\general(p)$.
These are straightforward consequences of Propositions~\ref{prop:properties-det}, \ref{prop:properties-pfaff}, and \ref{prop:properties-spectral}.
On the level of symbols, functoriality takes the following form.
An identification ${\Phi\colon p_- \to p_+}$ of type {\ITEMgeneral} over
a diffeomorphism ${\phi\colon X_- \to X_+}$
induces an isomorphism of coverings
\e\label{eqn:maps-induced-by-identificationX}
	\Phi^*\colon\general (p_-) \longrightarrow \general (p_+).
\e
When $X_\pm = \emptyset$ we take \eqref{eqn:maps-induced-by-identificationX} to mean the identity map.

To see \eqref{eqn:maps-induced-by-identificationX} choose $P_+ \in \sigma^{-1}(p_+)_{(\general)}$. Then $P_-\coloneqq (\phi,\Phi)^*P_+$ (see Definition~\ref{dfn:pullback-family}) represents $p_-$ and is of type {\ITEMgeneral}, using our type
assumption on $\Phi$. Moreover
$\Phi\colon P_- \to P_+$ is an isomorphism of Fredholm operators,
and \eqref{eqn:maps-induced-by-identificationX} is defined as
$\general(\Phi)$, meaning
\eqref{eqn:det-functorial}, \eqref{eqn:pfaffian-functoriality}, or
\eqref{eqn:spectral-functorial}, composed with the canonical projections.
Similarly we have, from the corresponding properties of Fredholm operators,
direct sum and adjointness isomorphisms, natural for \eqref{eqn:maps-induced-by-identificationX},
\begin{align}
\general(p)\otimes \general(q) &\longrightarrow \general(p\oplus q),\label{eqn:symbols-direct-sumsX}\\
\general(-p^\dagger) &\longrightarrow \general(p)^*.\label{eqn:symbols-adjointsX}
\end{align}
Recall then \eqref{eqn:symbol-deformations} as a consequence. Thus, for a deformation $\{p_z\}_{t\in [0,1]}$ we have
\e\label{eqn:symbol-deformationsX}
	\general\{p_z\}_{z \in [0,1]}\colon \general(p_0) \longrightarrow \general(p_1).
\e
\subsubsection{Proofs of Theorem~\ref{thm:symbol-calculus}(iv) and (v)}

We first state conditions ensuring the continuity of 
\eqref{eqn:det-functorial}, \eqref{eqn:pfaffian-functoriality}, and \eqref{eqn:spectral-functorial}.
These are obvious from the definition of the topology.

\begin{enumerate}
\item[\ITEMorient]
Let $0<\delta_y<d\left(0,\spec_\mathrm{ess}\left(A_y^\pm(A_y^\pm)^\dagger\right)\right)$ be
lower semicontinuous for two $Y$\nobreakdash-families of real Fredholm operators $A^\pm$ such that
\[
S(y) \coloneqq \spec\left(A_y^-(A_y^-)^\dagger\right) \cap [0, \delta_y) = \spec \left(A_y^+(A_y^+)^\dagger\right) \cap [0, \delta_y)
\]
for all $y \in Y$. For each $y \in Y$ and $\lambda \in S(y) \cup \{0\}$ let
\begin{align*}
F_{\lambda,y}\colon V_\lambda\left(A_y^-(A_y^-)^\dagger\right)
\longrightarrow
V_\lambda\left(A_y^+(A_y^+)^\dagger\right)\\
G_{\lambda,y}\colon V_\lambda\left((A_y^-)^\dagger A_y^-\right)
\longrightarrow
V_\lambda\left((A_y^+)^\dagger A_y^+\right)
\end{align*}
be isomorphisms. Assume that these are continuous as follows.
For every $y_0 \in Y$ and $0<\mu < \delta_{y_0}$ not in $S(y_0)$
there is by Lemma~\textup{\ref{lem:saFred}} a neighborhood $V$ of $y_0$
for which $V_{[0,\mu]}\left(A^\pm (A^\pm)^\dagger\right)$, $V_{[0,\mu]}\left((A^\pm)^\dagger A^\pm \right)$ have vector bundle topologies over $V$ and we can 
even find $V$ with $\mu < \delta_y$ for all $y\in V$. 
For all such neighborhoods we
require $\bigoplus_{\lambda \in [0,\mu]} F_{\lambda,\mu}$
and $\bigoplus_{\lambda \in [0,\mu]} G_{\lambda,\mu}$ to
be continuous. 
Then the $Y$\nobreakdash-family of isomorphisms \eqref{eqn:det-functorial}
define an isomorphism $\det A^-_y \to \det A^+_y$
of line bundles over $Y$.
\item[\ITEMpfaff]
Given two real skew-adjoint families $A^\pm$, let $\delta_y$ be as in {\ITEMorient} and let
$F_{\lambda,y}\colon V_\lambda\left((A_y^-)^\dagger A_y^-\right)
\to
V_\lambda\left((A_y^+)^\dagger A_y^+\right)$
be an orthogonal isomorphism for all $y\in Y$ and $\lambda \in S(y)\cup \{0\}$,
continuous as in {\ITEMorient}.
Then the $Y$\nobreakdash-family of isomorphisms \eqref{eqn:pfaffian-functoriality}
is a continuous map ${\operatorname{Pf}(A^-_y) \to \operatorname{Pf}(A^+_y)}$ over $Y$.
\item[\ITEMspectral]
Given two complex self-adjoint families $A^\pm$, let
$0<\delta_y<d\left(0,\spec_\mathrm{ess}(A_y)\right)$ be lower semicontinuous
such that
\[
	S(y) \coloneqq \spec(A_y^-) \cap (-\delta_y,\delta_y) = \spec(A_y^+) \cap (-\delta_y,\delta_y),
	\enskip \forall y \in Y.
\]
For all $y\in Y, \lambda \in {S(y)\cup\{0\}}$ let
$F_{\lambda,y}\colon V_\lambda(A_y^-) \to V_\lambda(A_y^+)$
be isomorphisms,
continuous as in {\ITEMorient}. 
Then \eqref{eqn:spectral-functorial} is a continuous isomorphism of coverings
$\spectral(A^-) \to \spectral(A^+)$.
\end{enumerate}

\begin{cor}
Let $\general \in \{\orient,\pfaff,\spectral\}$.
Let $P \in \PsiDO^0(X;E^0, E^1)$ be a $Y$\nobreakdash-family of zeroth-order $\PsiDO$s of type {\ITEMgeneral}, compactly supported in the image of an open embedding $i\colon U\hookrightarrow X$ and
with $\sigma(P)$ elliptic and compactly supported. Then extension by zero defines an continuous isomorphism
\e\label{pushforward-incl}
	i_!\colon \general(i^*P) \longrightarrow \general(P).
\e
We allow $U=\emptyset$ in which case $\general(i^*P)$ is trivial.
The maps \eqref{pushforward-incl} are functorial for \eqref{eqn:maps-induced-by-identificationX}
compatible with direct sums \eqref{eqn:symbols-direct-sumsX} and adjoints \eqref{eqn:symbols-adjointsX}.
Moreover, we have $i_! \circ j_! = (i\circ j)_!$ and $\id_!=\id$.
\end{cor}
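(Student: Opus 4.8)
The plan is to realize $i_!$ as literal extension by zero of compactly supported sections, exploiting that the kernel, the cokernel, and the small spectral subspaces of $P$ are genuinely supported in the compact set on which $P$ and $i^*P$ differ from the same bundle isomorphism. Write $p=\sigma(P)$ and let $L\subset i(U)$ be a compact support of $P$, so that $K\coloneqq i^{-1}(L)\subset U$ is compact. Following Appendix~\ref{sec-symbols-and-operators}, $P=M_{\widetilde p}+R$, where $M_{\widetilde p}$ is multiplication by the bundle isomorphism $\widetilde p\colon E^0\to E^1$ obeying the bounds \eqref{dfn.symb-compactly-supp-bounds} and $R$ has Schwartz kernel supported in $L\times L$ (enlarging $L$ inside $i(U)$ if needed); restricting to $U$ gives $i^*P=M_{\widetilde p|_U}+R|_U$. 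First I would record that both $P$ and $i^*P$ are $Y$-families of Fredholm operators, the latter by Theorem~\ref{thm:Fredholm-Properties} applied to the elliptic, compactly supported symbol $i^*p$ on $U$.

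The crux is a localization step. If $Pu=0$, then $M_{\widetilde p}u=-Ru$ is supported in $L$ because the kernel of $R$ lies in $L\times L$; since $\widetilde p$ is invertible, $u$ is supported in $L$, hence is the extension by zero of a section over $U$ supported in $K$, which is annihilated by $i^*P$, and conversely. The same argument applied to $P^\dagger=M_{\widetilde p^\dagger}+R^\dagger$ (again compactly supported in $L$), and --- for $\general\in\{\pfaff,\spectral\}$ --- to eigenvectors of $P^\dagger P$, resp.\ of $P$, with eigenvalue in an essential spectral gap $(-\delta_y,\delta_y)$ about $0$, produces mutually inverse restriction/extension isomorphisms $V_{[0,\mu]}(P^\dagger P)\cong V_{[0,\mu]}\bigl((i^*P)^\dagger i^*P\bigr)$, $V_{[0,\mu]}(PP^\dagger)\cong V_{[0,\mu]}\bigl(i^*P\,(i^*P)^\dagger\bigr)$ for small $\mu$, and $V_{(-\mu,\mu)}(P)\cong V_{(-\mu,\mu)}(i^*P)$ in the self-adjoint case; in particular $\Ker P\cong\Ker i^*P$ and $\Ker P^\dagger\cong\Ker(i^*P)^\dagger$, and $P$, $i^*P$ have identical spectrum with identical multiplicities in a common gap about $0$. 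Since $R$, $R|_U$ are compact, the essential spectra of $P$ and $i^*P$ near $0$ agree, so $\delta_y$ may be chosen lower semicontinuous and valid for both families.

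Granting this, I would define $i_!$ by transporting these identifications through the construction of $\general$: for $\general=\orient$ the restriction isomorphisms of $\Ker$ and $\Ker(\cdot)^\dagger$ induce $\det i^*P\to\det P$ and hence a map of orientation covers; for $\general=\pfaff$ the same works on $\Ker$, the restriction isomorphism intertwining the symplectic forms $\langle P\,\cdot,\cdot\rangle$ and $\langle i^*P\,\cdot,\cdot\rangle$ on eigenspaces; for $\general=\spectral$ one sends a representative $(\mu,n)$ of $\spectral(i^*P)$ to $(\mu,n)\in\spectral(P)$, which is well defined since $N_{]\mu,\nu[}(P)=N_{]\mu,\nu[}(i^*P)$. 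Continuity in $y$ is precisely the content of the criteria {\ITEMorient}, {\ITEMpfaff}, {\ITEMspectral} established immediately above the corollary, with the isomorphisms $F_{\lambda,y}$, $G_{\lambda,y}$ there taken to be the restriction maps of the eigenspaces; these are compatible with the stabilization maps of \eqref{eqn:topology-det}, \eqref{eqn:topology-pfaff} that define the topologies, so $i_!$ is a morphism of $\Z_2$-graded $G$-principal bundles, with inverse given by restriction. When $U=\es$ we have $L=\es$, so $P=M_{\widetilde p}$ is invertible and $\general(P)$ is canonically trivial by the invertibility clauses of Propositions~\ref{prop:properties-det}, \ref{prop:properties-pfaff}, \ref{prop:properties-spectral}, while $i^*P$ acts on the zero bundle; $i_!$ is then the induced isomorphism of trivial covers.

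It remains to verify the naturalities, all immediate once one observes that $i_!$ is literally extension by zero. Thus $i_!\circ j_!=(i\circ j)_!$ and $\id_!=\id$ because the zero-extensions compose and the identity extension is the identity; compatibility with \eqref{eqn:symbols-direct-sumsX} and \eqref{eqn:symbols-adjointsX} follows from $i^*(P\oplus Q)=i^*P\oplus i^*Q$ and $i^*(P^\dagger)=(i^*P)^\dagger$ together with the explicit descriptions of the direct-sum and adjoint isomorphisms in Propositions~\ref{prop:properties-det}, \ref{prop:properties-pfaff}, \ref{prop:properties-spectral}, which are built from the maps induced on $\Lambdatop\Ker$ and its dual and hence commute with restriction; and naturality for \eqref{eqn:maps-induced-by-identificationX} holds because an identification $\Phi\colon p_-\to p_+$ over a diffeomorphism $\phi$ restricts, near the supports, to an identification of the realized operators and $\phi$ carries extension by zero on $U_-$ to extension by zero on $U_+$. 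I expect the only genuine obstacle to be the families form of the localization step --- arranging a single lower-semicontinuous essential spectral gap valid for both $P$ and $i^*P$ and matching the restriction maps against the precise continuity criteria --- which is where the uniform invertibility of $\widetilde p$ built into the compactly supported framework of Appendix~\ref{sec-symbols-and-operators} enters.
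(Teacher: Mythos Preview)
Your proposal is correct and follows essentially the same route as the paper. The paper's proof simply invokes Theorem~\ref{thm:Fredholm-Properties}(ii), which already contains the localization statement that kernels, cokernels, and small eigenspaces of a compactly supported elliptic family lie in $C^\infty_\mathrm{cpt}(U)$, and then appeals to the continuity criteria {\ITEMorient}, {\ITEMpfaff}, {\ITEMspectral} stated immediately before the corollary; you spell out the localization argument by hand and then invoke the same criteria. One minor caveat: your decomposition $P=M_{\widetilde p}+R$ with $R$ having kernel in $L\times L$ is a slight oversimplification of the paper's formula \eqref{eqn:P-cpt-supported}, since $\widetilde p$ is only defined on $X\setminus\phi^{-1}(1)$ rather than globally --- but once you enlarge $L$ to contain $\supp\phi$ and use $\phi P(\phi\,\cdot)$ as the compactly supported piece, the argument goes through as you wrote it, and this is exactly how Theorem~\ref{thm:Fredholm-Properties}(ii) is proved in the appendix.
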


\begin{proof}
This follows from Theorem~\ref{thm:Fredholm-Properties}(ii).
Property (i) is a restatement of \eqref{eqn:det-direct-sums}, \eqref{eqn:pfaff-sums}, and \eqref{eqn:spectral-direct-sums}, while (ii) is trivial.
\end{proof}

\begin{prop}
Let $\general \in \{\orient,\pfaff,\spectral\}$.
Let $p$ be a $Y$\nobreakdash-family of elliptic symbols of type {\ITEMgeneral}.
Then we have canonical \emph{reduction of order} isomorphisms
\e\label{eqn:symbols-reduction-of-orderX}
	\general(p) \longrightarrow \general\left( p(p^\dagger p)^{-1/2} \right).
\e
These are compatible with the isomorphisms \eqref{eqn:maps-induced-by-identificationX}--\eqref{eqn:symbol-deformationsX}.
\end{prop}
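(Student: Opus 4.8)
The plan is to construct the isomorphism \eqref{eqn:symbols-reduction-of-orderX} as the fibre transport \eqref{eqn:orient-deformations} along the canonical straight-line homotopy of elliptic symbols $p_s\coloneqq p\,(p^\dagger p)^{-s/2}$, $s\in[0,1]$, which runs from $p_0=p$ to $p_1=p\,(p^\dagger p)^{-1/2}$. First I would check that $\{p_s\}$ stays inside the admissible class: each $p_s$ is a composition of fibrewise isomorphisms over $T^*X\setminus0_X$, hence elliptic, of homogeneity degree $m(1-s)$, so for non-compact $X$ (where $m=0$) every $p_s$ is again of order zero and it remains compactly supported in the same $L$, because outside $L$ the factor $(p^\dagger p)^{-s/2}=(\widetilde p^\dagger\widetilde p)^{-s/2}$ is a $\xi$-independent automorphism with norm and co-norm bounded above and below, uniformly as in \eqref{dfn.symb-compactly-supp-bounds}. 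The type is preserved: $(p^\dagger p)^{-s/2}$ is real for {\ITEMorient}; for {\ITEMspectral} it is self-adjoint and commutes with $p$, so $p_s=p\,|p|^{-s}$ is self-adjoint; and for {\ITEMpfaff} one has $p^\dagger p=-p^2\ge0$, the factor commutes with $p$, and $p_s^\dagger=(p^\dagger p)^{-s/2}p^\dagger=-p_s$. Thus $\{p_s\}$ is a deformation of type-{\ITEMgeneral} elliptic symbols, and \eqref{eqn:symbols-reduction-of-orderX} is by definition $\general\{p_s\}_{s\in[0,1]}$; the compatibilities with \eqref{eqn:maps-induced-by-identificationX}--\eqref{eqn:symbol-deformationsX} will then reduce to those already recorded for \eqref{eqn:symbol-deformationsX} together with the algebraic identities relating $p_1$ to adjoints and direct sums.

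To make \eqref{eqn:orient-deformations} literally applicable I would spell out the underlying deformation of Fredholm operators. Choose $P\in\sigma^{-1}(p)_{(\general)}$, pick $\delta>0$ with $\spec_\mathrm{ess}(P^\dagger P)\cap[0,\delta)=\emptyset$, and put $K\coloneqq\delta\,\Pi_{[0,\delta/2)}(P^\dagger P)$ (with $\Pi$ the spectral projection) and $R\coloneqq P^\dagger P+K$. Then $R$ is invertible, positive and self-adjoint, again of type {\ITEMgeneral} (real for {\ITEMorient}, and commuting with $P$ for {\ITEMspectral}, {\ITEMpfaff} since there $P^\dagger P$ is a function of $P$), and --- since $K$ is a compactly supported smoothing operator --- $R$ is a $\PsiDO$ of the same order as $P^\dagger P$ with principal symbol $p^\dagger p$; hence Seeley's complex powers $R^{-s/2}$ are $\PsiDO$s with principal symbol $(p^\dagger p)^{-s/2}$. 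Thus $Q_s\coloneqq PR^{-s/2}$ is a continuous $[0,1]$-family of compactly supported $\PsiDO$s of type {\ITEMgeneral} with $\sigma(Q_s)=p_s$, $Q_0=P$, $Q_1\in\sigma^{-1}(p(p^\dagger p)^{-1/2})_{(\general)}$ and $\Ker Q_s=\Ker P$ throughout; regarded as bounded maps $H^{\max(m,0)}(E^0)\to H^{\min(m,0)}(E^1)$ between fixed Sobolev completions they form a deformation of Fredholm operators of type {\ITEMgeneral}, and \eqref{eqn:symbols-reduction-of-orderX} is then the composite $\general(p)\xrightarrow{\ev_P}\general(Q_0)\xrightarrow{\eqref{eqn:orient-deformations}}\general(Q_1)\xleftarrow{\ev_{Q_1}}\general(p(p^\dagger p)^{-1/2})$.

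It remains to prove independence of the choices and to run the compatibilities. Two admissible choices $P,P'$ lie on a segment $P_u$ in the convex set $\sigma^{-1}(p)_{(\general)}$ (Lemma~\ref{lem:convex}), along which $K_u\coloneqq\delta\,\Pi_{[0,\delta/2)}(P_u^\dagger P_u)$ is a continuous family of regularisers for a common $\delta$ (Lemma~\ref{lem:saFred}); the resulting $[0,1]^2$-family $Q_s^u\coloneqq P_u(P_u^\dagger P_u+K_u)^{-s/2}$ of deformations shows that the two resulting maps \eqref{eqn:symbols-reduction-of-orderX} agree, using that $\general(p)$ and $\general(p(p^\dagger p)^{-1/2})$ are defined as limits over $\sigma^{-1}(\cdot)_{(\general)}$ and that $u\mapsto Q_1^u\in\sigma^{-1}(p_1)_{(\general)}$ realises the transition isomorphism. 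Compatibility with \eqref{eqn:maps-induced-by-identificationX}--\eqref{eqn:symbol-deformationsX} then follows from the naturality of fibre transport recorded after \eqref{eqn:orient-deformations}, once $Q_s$ is chosen compatibly with each structure: for an identification $\Phi\colon p_-\to p_+$ one pulls $P_+$ (hence $R_+$) back along $(\phi,\Phi)$, so that $\Phi\colon Q_{-,s}\to Q_{+,s}$ is an isomorphism of Fredholm operators for every $s$; for a direct sum one takes a common $\delta$, so that the regulariser of $P\oplus P'$ is $R\oplus R'$, and notes that $R^{-s/2}$ respects direct sums; and for adjoints one uses the identity $(p(p^\dagger p)^{-1/2})^\dagger=p^\dagger(pp^\dagger)^{-1/2}$ together with the operator identity $R^{-s/2}P^\dagger=P^\dagger(R')^{-s/2}$, where $R'=PP^\dagger+\delta\,\Pi_{[0,\delta/2)}(PP^\dagger)$, which holds since $(P^\dagger P)P^\dagger=P^\dagger(PP^\dagger)$ and $P^\dagger$ intertwines the corresponding spectral projections.

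I expect the main obstacle to be the non-invertibility of $P^\dagger P$: the regularisation $R=P^\dagger P+K$ must make the complex power $R^{-s/2}$ available while leaving the principal symbol $p^\dagger p$ unchanged, stay of type {\ITEMgeneral} (in particular commute with $P$ for {\ITEMspectral} and {\ITEMpfaff}), and --- on a non-compact $X$ --- remain compactly supported, which forces the eigenfunctions of $P^\dagger P$ entering $K$ to be supported near $L$; the independence-of-choices argument must then be carried out over the space of such regularisers. Everything else is a routine consequence of the naturality statements in Propositions~\ref{prop:properties-det}, \ref{prop:properties-pfaff} and \ref{prop:properties-spectral}.
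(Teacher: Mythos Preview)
Your construction has a genuine gap for $m\neq 0$. The operators $Q_s=PR^{-s/2}$ have order $m(1-s)$, and your claim that they form a Fredholm family between the fixed Sobolev spaces $H^{\max(m,0)}\to H^{\min(m,0)}$ fails: say $m>0$; then for any $s>0$ the image $Q_s(H^m)$ lies in $H^{ms}$, a proper dense and hence non-closed subspace of $L^2$, so $Q_s\colon H^m\to L^2$ is not Fredholm. In particular $Q_1$ is the compact inclusion $H^m\hookrightarrow L^2$ followed by a Fredholm map on $L^2$, hence itself compact. The same varying-order issue blocks a direct appeal to \eqref{eqn:symbol-deformationsX}, which is only set up for families in $\Ell_{Y\times Z}^m(X)$ of a \emph{fixed} order.

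The paper circumvents this by not interpolating the order at all. One passes directly from $P$ to the order-$0$ bounded operator $P(1+P^\dagger P)^{-1/2}$: since $(1+P^\dagger P)^{-1/2}$ is invertible the kernel and cokernel are literally unchanged, and for {\ITEMpfaff} and {\ITEMspectral} the near-zero spectrum and eigenspaces of $P(1+P^\dagger P)^{-1/2}$ are those of the unbounded operator $P$; this yields the identification $\general(P)=\general\bigl(P(1+P^\dagger P)^{-1/2}\bigr)$ without any deformation. One then approximates $(1+P^\dagger P)^{-1/2}$ by a genuine $\PsiDO$ $Q$ with $\sigma(Q)=(p^\dagger p)^{-1/2}$ and $S\coloneqq(1+P^\dagger P)^{-1/2}-Q$ smoothing, and deforms along the straight line $t\mapsto P(Q+tS)$ entirely within bounded order-$0$ Fredholm operators on $L^2$, ending at $PQ\in\sigma^{-1}\bigl(p(p^\dagger p)^{-1/2}\bigr)_{(\general)}$. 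Your regularisation and the independence and compatibility arguments survive for $m=0$, but for general $m$ the key missing idea is this pointwise identification of $\general$ across the jump of order, in place of a continuous interpolation.
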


\begin{proof}
Choose a pseudo-differential operator $P$ representing $p$. Then $1+P^\dagger P$
is a positive definite operator and we may define $(1+P^\dagger P)^{-1/2}$ using
unbounded functional calculus.
One can construct a pseudo-differential operator $Q$ with the property that
$S\coloneqq (1+P^\dagger P)^{-1/2}-Q$ is a smoothing operator and $\sigma(Q)=(p^\dagger p)^{-1/2}$. Then
$P(1+P^\dagger P)^{-1/2}$ has the same kernel and cokernel as $P$ and
can be deformed along the straight line $P\circ [Q+tS]$ to $P\circ Q$, a pseudo-differential
operator representing $p(p^\dagger p)^{-1/2}$. This gives \eqref{eqn:symbols-reduction-of-orderX} for
$\general=\orient$.

We can assume $Q$ to be self-adjoint. 
When $P$ is self-adjoint or skew-adjoint, we can then perform the deformation through
operators of the same type. Moreover, the spectrum of the Fredholm operator $P(1+P^\dagger P)^{-1/2}$ near
zero can then be identified with the spectrum of the unbounded operator $P$ near zero.
Hence \eqref{eqn:symbols-reduction-of-orderX} follows from the stated continuity conditions
for $\general \in \{\pfaff,\spectral\}$.
\end{proof}

\begin{rem}
For $\general=\orient$ we also have a more formal proof.
Suppose $r$ is positive-definite. Write $r_{\xi,y}=q_{\xi,y}^\dagger q_{\xi,y}$ for
$0\neq \xi \in T_x^*X$, $y \in Y$ using the unique positive square root.
Then $q \in \Ell^{m/2}_Y(X;E,E)$. The adjointness and composition properties determine
a canonical trivialization
$\underline{\Z}_2 \cong (\orient q)^* \otimes \orient (q) \leftarrow \orient (q^\dagger) \otimes \orient (q) \to \orient (r)$ for $r$ positive-definite. Of course, this follows also from Proposition~\ref{prop:properties-det}(v).
Applied to $r=(p^\dagger p)^{-1/2}$, the composition property determines a canonical isomorphism
$\orient \left(p(p^\dagger p)^{-1/2}\right) \cong \orient(p) \otimes \orient (p^\dagger p)^{-1/2} \cong \orient(p)$.
\end{rem}

\appendix

\section{Compactly supported $\PsiDO$s}\label{sec-symbols-and-operators}

For non-compact manifolds there are various types
of pseudo-differential operators depending on the
regularity assumption at infinity. The rather restrictive class of
`compactly supported operators' suffice for our purpose.
Their theory reduces quickly to finitely many charts
as in the compact case.

\subsection{Families of pseudo-differential operators}\label{families}

Let $Y$ be a topological space. A {\it $Y$\nobreakdash-family of manifolds} is a space
$E$ with a continuous map $\pi\colon E \to Y$ and a collection of smooth
structures on each fiber $E_y=\pi^{-1}(y)$.
Let $(E_\pm,\pi_\pm)$ be $Y$\nobreakdash-families of manifolds.
A {\it $Y$\nobreakdash-family} of smooth maps is a continuous map $\phi\colon E_- \to E_+$
with $\pi_+\circ \phi = \pi_-$ whose restriction $\phi_y$
to every fiber is smooth. For a $Y$\nobreakdash-family of open embeddings, diffeomorphisms,
or bundle isomorphisms we additionally require $\phi$ to be a homeomorphism
onto its image and every $\phi_y$ to be of the corresponding type.\smallskip

We next outline some basic theory of pseudo-differential operators.
For open subsets $\Omega \subset \R^d$, H\"ormander~\cite{Hoer}
is a good reference. In this paper, we work with the following combination
of definitions from Atiyah--Singer~\cite[p.~509]{AtSi1}, \cite[p.~123]{AtSi4},
\cite[p.~141]{AtSi5} and H\"ormander~\cite[p.~153]{Hoer}.

\begin{dfn}
Let $Y$ be a space, $X$ a manifold, $m\in \R$, and
$E^0, E^1 \to X\times Y$ Hermitian vector bundles.
The set $\PsiDO^m_Y(X;E^0,E^1)$ of \emph{$Y$\nobreakdash-families of $m$\nobreakdash-th order
pseudo-differential operators over $X$} consists of families of $\C$\nobreakdash-linear maps
\[
	P_y \colon C^\infty_\mathrm{cpt}(X\times \{y\}, E^0)
	\longrightarrow
	C^\infty(X\times \{y\}, E^1), \qquad y \in Y,
\]
having a Schwartz kernel of prescribed local form as follows. Let
$x\colon U \to \Omega \subset \R^d$ be a coordinate on an open subset $U \subset X$
(possibly disconnected), $f \in C^\infty_\mathrm{cpt}(U)$, $V \subset Y$ open, and
$\tau^0$ and $\tau^1$ unitary trivializations of $E^0|_{U\times V}$ and $E^1|_{U\times V}$.
Given this, there should exist a $V$\nobreakdash-family of \emph{total symbols}, homomorphisms
\[
	p_y^{(f)}\colon U\times \R^d \longrightarrow \Hom_\C(E^0|_{U\times \{y\}}, E^1|_{U\times \{y\}}),\qquad
	\forall y\in V,
\]
such that for all $s \in C^\infty_\mathrm{cpt}(U\times \{y\},E^0)$ we can write
\e\label{equation.2.3}
	P_y(fs)(x)
	=
	(2\pi)^{-n}\int_{\R^d} p_y^{(f)}(x,\xi)\hat{s}(\xi)e^{i\langle x,\xi \rangle}d\xi,
	\qquad
	\forall x\in U.
\e
(The left hand side need not be supported in $U$.)
Here the integral and Fourier transform are performed on $\Omega$
using $x, \tau^0, \tau^1$.
The following conditions are required in order for the
oscillatory integral \eqref{equation.2.3} to be well-behaved:
\begin{enumerate}
\item[(i)]
$\forall \al, \be \in \N^d, y \in V$ we have finite bounds
\[
	\|p_y^{(f)}\|_{\al,\be} \coloneqq
	\sup_{x\in U, \xi \in \R^d}
	\frac{
	\left\|\frac{\partial^{|\al|}}{\partial x^\al}\frac{\partial^{|\be|}}{\partial \xi^\be} p_y^{(f)}(x,\xi) \right\|
	}
	{
	{(1+\|\xi\|)^{m-\be}}
	} < +\infty.
\]
\item[(ii)]
$\forall \al, \be \in \N^d, y_0 \in V, \varepsilon > 0$
there exists a neighborhood $V_0 \subset V$ of $y_0$ with
\e\label{continuity-of-PSIDO}
	\|p_y^{(f)}-p_{y_0}^{(f)}\|_{\al,\be} \leq \varepsilon,
	\qquad\forall y\in V_0.
\e
\item[(iii)]
The limit
$\sigma_{x,\xi}^{(f)}(P_y) \coloneqq \lim_{\lambda \to +\infty} \lambda^{-m}\cdot p^{(f)}_y(x,\lambda\cdot \xi)$ exists.
\end{enumerate}
Since $p_y^{(f)}$ is uniquely determined by $P$, $f$ and the trivializations $x, \tau$,
we can define
the \emph{principal symbol family} $\sigma(P)\colon \pi^*E^0 \to \pi^*E^1$
by $\sigma_\xi(P_y) \coloneqq \sigma_{x,\xi}^{(f)}(P_y)$
for any $\xi \in T^*_x U$ and $f \in C^\infty_\mathrm{cpt}(U)$
with $f\equiv 1$ near $x$.

We consider the following \emph{types} of $P \in \PsiDO_Y^m(X;E^0,E^1)$:
{\ITEMorient} $P$ is \emph{real} if $E^0$ and $E^1$ are equipped with
orthogonal real structures and
$P_y(\overline{s})=\overline{P_y(s)}$,
{\ITEMpfaff}
$P$ is real skew-adjoint if $P$ is real and $P^\dagger=-P$ for the formally adjoint operator,
and {\ITEMspectral}
$P$ is self-adjoint if $P^\dagger = P$.
\end{dfn}


\subsection{Compactly supported operators}

\begin{dfn}
A zeroth-order family $P \in \PsiDO^0_Y(X;E^0,E^1)$ 
is \emph{compactly supported} in an open ${U \subset X}$ if
there exists ${\phi \in C^\infty_\mathrm{cpt}(U, [0,1])}$
and a $Y$\nobreakdash-family of bundle homomorphisms
${\widetilde{p} \colon E^0|_{\left(X \setminus L\right) \times Y} \to E^1|_{\left(X \setminus L\right) \times Y}}$, where ${L \coloneqq \phi^{-1}(1)}$,
satisfying the following conditions:
\begin{gather}
\forall y\in Y, s\in C^\infty_\mathrm{cpt}(X\times \{y\}, E^0):
P_y(s) = \phi P_y(\phi s) + (1-\phi^2) \widetilde{p}_y\circ s.\label{eqn:P-cpt-supported}\\
\forall y \in Y\mkern2mu \exists C > 0 \forall x \notin L, y \in Y : \|\widetilde{p}_{x,y}\|\leq C.\label{eqn:P-cpt-supported-bounds}\\
\begin{gathered}
\text{$\forall \varepsilon > 0, y_0 \in Y$ there exists a neighborhood $V$ of $y_0$ such that}\\
(x,y) \in (X \setminus L) \times V
\implies
\|\widetilde{p}_{x,y} - \widetilde{p}_{x,y_0}\| \leq \varepsilon.
\end{gathered}\label{eqn:P-cpt-supported-continuous}
\end{gather}
\end{dfn}

\begin{lem}\label{lem:convex}
Assume $\psi \in C^\infty(X)$ satisfies $\psi|_{\supp \phi} \equiv 1$. Then
\eqref{eqn:P-cpt-supported} holds with $\psi$ in place of $\phi$. In particular, a finite convex combination of elements in $\PsiDO_Y^0(X; E^0, E^1)$ compactly supported in $U$ is again compactly supported in $U$.
\end{lem}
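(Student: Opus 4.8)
The plan is to reduce everything to proving the single identity \eqref{eqn:P-cpt-supported} with $\psi$ substituted for $\phi$, keeping the \emph{same} bundle homomorphism $\widetilde p$; the assertion about convex combinations then follows immediately by linearity. Before the computation I would record the two elementary facts used throughout: since $\psi\equiv 1$ on $\supp\phi$ one has $\psi\phi=\phi$ (hence also $\psi^2\phi^2=\phi^2$ and $\phi\cdot\psi s=\phi s$ for any section $s$), and since $\widetilde p_y$ is fibrewise linear one has $\widetilde p_y\circ(\psi s)=\psi\cdot(\widetilde p_y\circ s)$. I would also note that the set $\{\psi^2\neq 1\}$ is contained in $X\setminus\supp\phi\subseteq X\setminus L$, so that the term $(1-\psi^2)\,\widetilde p_y\circ s$ is well defined with the given $\widetilde p$ (and extends smoothly by zero).

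The core argument is a short manipulation, which I would carry out directly. Apply \eqref{eqn:P-cpt-supported} to the section $\psi s$ and use $\phi\cdot\psi s=\phi s$ to obtain $P_y(\psi s)=\phi\,P_y(\phi s)+(1-\phi^2)\,\psi\cdot(\widetilde p_y\circ s)$; multiply through by $\psi$ and use $\psi\phi=\phi$ to get $\psi\,P_y(\psi s)=\phi\,P_y(\phi s)+\psi^2(1-\phi^2)\,(\widetilde p_y\circ s)$; finally add $(1-\psi^2)\,(\widetilde p_y\circ s)$ and collect the coefficient of $\widetilde p_y\circ s$, which simplifies to $1-\psi^2\phi^2=1-\phi^2$. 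Thus the right-hand side recombines to $\phi\,P_y(\phi s)+(1-\phi^2)\,(\widetilde p_y\circ s)=P_y(s)$ by \eqref{eqn:P-cpt-supported} once more, which establishes the first assertion.

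For the convex combination I would start from operators $P^{(1)},\dots,P^{(k)}\in\PsiDO^0_Y(X;E^0,E^1)$ compactly supported in $U$, with cut-offs $\phi_j\in C^\infty_\mathrm{cpt}(U,[0,1])$ and bundle homomorphisms $\widetilde p_j$ defined off $L_j=\phi_j^{-1}(1)$, together with weights $c_j\geq 0$, $\sum_j c_j=1$. Since $\bigcup_j\supp\phi_j$ is compact and contained in $U$, I would choose one $\psi\in C^\infty_\mathrm{cpt}(U,[0,1])$ equal to $1$ on a neighbourhood of it, and set $L\coloneqq\psi^{-1}(1)$, so that $X\setminus L\subseteq\bigcap_j(X\setminus L_j)$. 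Applying the first part to each $P^{(j)}$ with this common $\psi$ and taking the $c_j$-weighted sum shows that $P\coloneqq\sum_j c_j P^{(j)}$ satisfies \eqref{eqn:P-cpt-supported} with cut-off $\psi$ and bundle homomorphism $\widetilde p\coloneqq\sum_j c_j\widetilde p_j$ on $X\setminus L$; the bound \eqref{eqn:P-cpt-supported-bounds} and the continuity \eqref{eqn:P-cpt-supported-continuous} for $\widetilde p$ are inherited from those for the $\widetilde p_j$ by the triangle inequality and $\sum_j c_j=1$, so $P$ is compactly supported in $U$.

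I do not anticipate a genuine obstacle; the argument is entirely elementary. The only points requiring care are the bookkeeping of supports — making sure $\widetilde p$ (and $\sum_j c_j\widetilde p_j$) is defined wherever $1-\psi^2$ fails to vanish, and remembering that the support set in the definition is $L=\psi^{-1}(1)$ rather than $\supp\psi$ — together with the observation that such a $\psi$ exists precisely because a finite union of the compact sets $\supp\phi_j$ is a compact subset of the open set $U$.
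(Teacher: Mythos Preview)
Your proof is correct and follows essentially the same approach as the paper: the identity $\psi\phi=\phi$ is used to reduce $\psi P_y(\psi s)+(1-\psi^2)\widetilde p_y\circ s$ back to $\phi P_y(\phi s)+(1-\phi^2)\widetilde p_y\circ s=P_y(s)$ via the same algebraic manipulation, and the convex combination is handled by passing to a common cut-off. You are somewhat more explicit than the paper about the well-definedness of $(1-\psi^2)\widetilde p_y\circ s$ and about verifying \eqref{eqn:P-cpt-supported-bounds} and \eqref{eqn:P-cpt-supported-continuous} for $\sum_j c_j\widetilde p_j$, which is fine.
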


\begin{proof}
By assumption, $\phi \psi = \phi$. Then
\begin{align*}
	\psi P_y(\psi s) + (1-\psi^2)\widetilde{p}_y\circ s
	&\overset{\eqref{eqn:P-cpt-supported}}{=}
	\psi \left(
	\phi P_y(\phi \psi s) + (1-\phi^2)\widetilde{p}_y\circ \psi s
	\right)
	+(1-\psi^2)\widetilde{p}_y\circ s\\
	&=
	\phi P_y(\phi s) + (1-\phi^2)\widetilde{p}_y\circ s = P_y(s).
\end{align*}
In a finite convex combination we may therefore use a common cut-off $\phi$ for each
of the operators to check condition \eqref{eqn:P-cpt-supported}.
\end{proof}

Pseudo-differential operators are not local in general, but
assuming \eqref{eqn:P-cpt-supported} we can at least restrict to $U$.

\begin{dfn}\label{dfn:pullback-family}
Let $i\colon U \to X$ be an open embedding of 
manifolds, not necessarily compact and
$P \in \PsiDO_Y^0(X;E^0,E^1)$ compactly supported in $i(U)$.
For $y \in Y$ and $s\in C^\infty_\mathrm{cpt}(U\times \{y\}, E^0)$
the section $P_y(s\circ i^{-1})$ is supported in $i(U)$, by
\eqref{eqn:P-cpt-supported}. The
\emph{pullback family} in $\PsiDO_Y^0(U; i^*E^0, i^*E^1)$ is
\[
	\left(i^*P\right)_y(s)\coloneqq P_y(s\circ i^{-1})\circ i.
\]
Given, in addition, a pair of unitary bundle isomorphisms $\Phi^\bu \colon F^\bu \to i^*E^\bu$,
we may define $(i,\Phi)^*P_y(s) \coloneqq (\Phi^1)^{-1}\circ P_y(\Phi^0\circ s\circ i^{-1})\circ i$ acting on sections of $F^\bu$.
\end{dfn}

\begin{dfn}
Let $P \in \PsiDO^0_Y(X; E^0, E^1)$.
Then $P$ is \emph{compactly supported elliptic in $U \subset X$}
if $P$ is compactly supported in $U$ and $\sigma(P)$ is
an elliptic symbol family supported in a compact subset $L\times Y \subset U$.
\end{dfn}

\subsection{Fredholm results}

We next summarize the main properties of compactly supported families.
These are well-known for $X$ and $Y$ compact, see Atiyah--Singer~\cite{AtSi4}.

\begin{thm}\label{thm:Fredholm-Properties}
Let $Y$ be space, $X$ an oriented Riemannian manifold, and let
$E^0, E^1 \to X \times Y$ be Hermitian vector bundles. Let
$L^2(X,E^0), L^2(X,E^1)$ be the Hilbert space bundles over
$Y$ of $L^2$\nobreakdash-sections in $X$\nobreakdash-direction.
\begin{enumerate}
\item[\textup{(i)}]
Let $P \in \PsiDO^0_Y(X; E^0,E^1)$ be a compactly supported $Y$\nobreakdash-family.
Then $P$ can be extended to a continuous bundle map 
\e\label{equation.2.4}
	P\colon L^2(X,E^0) \longrightarrow L^2(X, E^1)
\e
restricting fiberwise to bounded operators.
	\item[\textup{(ii)}]
	Let $P \in \PsiDO^0_Y(X; E^0,E^1)$ be compactly supported elliptic in $U$.
	Then each $\Ker P_y$ is finite-dimensional and contained in $C^\infty_\mathrm{cpt}(U,E^0)$.
	The operator $P^\dagger$ is also compactly supported elliptic in $U$.
	Hence the restrictions of \eqref{equation.2.4} to all of the fibers are
	Fredholm.

	When $E^0=E^1$, the operator $P-\lambda$ is compactly supported elliptic in
	$U$ for all $|\lambda|$ sufficiently small, depending on the constant $c$ in
	\eqref{dfn.symb-compactly-supp-bounds}.
	In particular, all eigenspaces of $P_y$ with eigenvalue $\lambda$ near
	zero are finite-dimensional and all eigenfunctions belong
	 to $C^\infty_\mathrm{cpt}(U,E^0)$.
	\item[\textup{(iii)}]
	Every family of elliptic symbols $p\in \Ell_Y^0(X;E^0,E^1)$ is the principal symbol
	of a family $P$ in $\PsiDO^0_Y(X; E^0,E^1)$.
	Moreover, a real, skew-adjoint, or self-adjoint family $p$ can be realized by a
	family $P$ of corresponding type.
	When $p$ is compactly supported in $L\subset U \subset X$ and $U$ is open,
	the operator $P$ can be chosen to be compactly supported elliptic in $U$.
	In each case there is a convex space of choices for $P$.
\end{enumerate}
\end{thm}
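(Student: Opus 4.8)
The plan is to reduce everything to finitely many coordinate charts and invoke the classical compact theory of Atiyah--Singer~\cite{AtSi4} and H\"ormander~\cite{Hoer}, carrying the parameter $y\in Y$ along by means of the uniform seminorm bounds and the continuity estimates in the definition of $\PsiDO^m_Y$. For part (i), I would split a compactly supported $P$ using \eqref{eqn:P-cpt-supported} as $P_y(s) = \phi P_y(\phi s) + (1-\phi^2)\widetilde p_y\circ s$. The second summand is fiberwise multiplication by the bounded bundle map $\widetilde p_y$, hence $L^2$\nobreakdash-bounded by \eqref{eqn:P-cpt-supported-bounds} and continuous in $y$ by \eqref{eqn:P-cpt-supported-continuous}. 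The first summand $\phi P_y(\phi\cdot)$ has Schwartz kernel supported in the compact set $\supp\phi\times\supp\phi$; covering $\supp\phi$ by finitely many charts trivializing $E^0,E^1$ and using a subordinate partition of unity, one reduces to $L^2$\nobreakdash-boundedness of a finite sum of zeroth-order $\PsiDO$s on open subsets of $\R^d$ (Calder\'on--Vaillancourt), whose operator norm is controlled by finitely many of the seminorms $\|p^{(f)}_y\|_{\al,\be}$; uniformity in $y$ and norm-continuity of $y\mapsto P_y$ then follow from conditions (i) and (ii) in the definition of $\PsiDO^m_Y$.

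For part (ii), the key is a \emph{compactly supported parametrix}. Since $\sigma(P)$ is elliptic and supported in a compact $L\times Y\subset U$, one inverts the total symbol for large $|\xi|$ in local charts, patches the resulting local operators with cutoffs supported near $L$, and glues in the bundle isomorphism $\widetilde p_y$ on $X\setminus L$ (invertible with uniform control, since off $L$ one has $\sigma_\xi(P)=\widetilde p$ and $\sigma(P)$ is elliptic). This produces $Q\in\PsiDO^0_Y$, compactly supported in $U$, with $QP=\id+S$ and $PQ=\id+S'$ where $S,S'$ are smoothing with Schwartz kernels supported in a fixed compact subset of $U\times U$; fiberwise such operators are Hilbert--Schmidt, hence compact on $L^2$, so \eqref{equation.2.4} is fiberwise Fredholm. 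If $P_yu=0$ then $u=-S_yu\in C^\infty_\mathrm{cpt}(U,E^0)$ by the support of the kernel of $S_y$, which is the asserted elliptic regularity, and in particular $\Ker P_y$ is finite-dimensional. The formal adjoint satisfies $\sigma(P^\dagger)=\sigma(P)^\dagger$, still elliptic and compactly supported in $U$ (the relation \eqref{eqn:P-cpt-supported} passes to adjoints with $\widetilde p^\dagger$ in place of $\widetilde p$), so $\Coker P_y\cong\Ker P_y^\dagger$ is finite-dimensional as well. Finally, when $E^0=E^1$ a direct computation from \eqref{eqn:P-cpt-supported} shows that $P-\lambda$ is compactly supported in $U$ with the same cutoff $\phi$ and bundle map $\widetilde p-\lambda\,\id$, which stays invertible for $|\lambda|$ small compared to the constant $c$ in \eqref{dfn.symb-compactly-supp-bounds}; hence $\sigma(P-\lambda)=\sigma(P)-\lambda$ remains elliptic and compactly supported, and the preceding argument yields the statement about eigenspaces near zero.

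For part (iii), take a locally finite cover of $X$ by charts trivializing $E^0,E^1$, a subordinate partition of unity $\{\rho_i\}$, and bump functions $\chi_i\equiv1$ on $\supp\rho_i$; extend $p$ to a smooth total symbol near $\xi=0$, define $P_i$ in chart $i$ by \eqref{equation.2.3}, and set $P\coloneqq\sum_i\chi_i P_i\rho_i$, so that $\sigma(P)=p$. For a prescribed type, replace $P$ by $\tfrac12(P+\overline P)$, $\tfrac12(P+P^\dagger)$, or $\tfrac12(P-P^\dagger)$ as appropriate; this leaves the principal symbol unchanged because $p$ already has the corresponding symmetry. If $p$ is compactly supported in $L\subset U$ with off-$L$ bundle map $\widetilde p$, pick $\phi\in C^\infty_\mathrm{cpt}(U,[0,1])$ with $\phi\equiv1$ near $L$ and replace $P$ by $\phi P\phi+(1-\phi^2)\widetilde p$: this has the form \eqref{eqn:P-cpt-supported}, hence is compactly supported elliptic in $U$, and still has principal symbol $p$ (the two descriptions agree near $L$, while off $L$ one has $\sigma_\xi(P)=\widetilde p=p_\xi$). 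Convexity of the space of choices is immediate: having the prescribed principal symbol, the prescribed type, and compact support in $U$ (here using Lemma~\ref{lem:convex}) are each preserved under finite convex combinations.

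I expect the main obstacle to be part (ii): producing the parametrix with \emph{controlled supports}, so that the remainders $S,S'$ are genuinely smoothing with compactly supported, continuously $y$\nobreakdash-varying Schwartz kernels, and verifying that the compactness on $L^2$, the interior elliptic regularity, and the support conclusion $\Ker P_y\subset C^\infty_\mathrm{cpt}(U,E^0)$ all survive the patching. Everything else is essentially bookkeeping on top of the compact case, the only recurring subtlety being uniformity of all estimates in the parameter $y$.
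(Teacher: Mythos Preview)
Your proposal is correct and follows essentially the same approach as the paper: split via \eqref{eqn:P-cpt-supported}, apply a local $L^2$-boundedness estimate (the paper adapts H\"ormander rather than quoting Calder\'on--Vaillancourt) together with a finite partition of unity over $\supp\phi$, build the parametrix by patching local parametrices with the exact inverse $\widetilde p^{-1}$ outside $L$, and realize and symmetrize $P$ exactly as you describe, invoking Lemma~\ref{lem:convex} for convexity. The paper's only additional wrinkle in part~(i) is to choose the charts $V_a$ (via a Lebesgue number) so that each pairwise union $V_a\cup V_b$ is still a chart, which lets the cross terms $\chi_a^2\phi P_{y_0}(\chi_b^2\phi s)$ be estimated directly by the local bound.
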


\begin{proof}
\textup{(i)}\hskip\labelsep
The key point is the following local estimate.
For $y_0\in Y$ and functions $f,g \in C^\infty_\mathrm{cpt}(V)$ supported in
a chart neighborhood $V$ we have
\e\label{equation.2.5}
	\|gP_{y_0}(f s)\|_{L^2} \leq D_{y_0} \cdot \|s\|_{L^2},\qquad
	\forall s \in C^\infty_\mathrm{cpt}(V\times \{y_0\},E^0).
\e
Here the constant $D_{y_0}$ is a homogeneous linear polynomial in
$\|p^{(f)}_{y_0}\|_{\al,0}$ for ${|\al|\leq \dim X+1}$ whose coefficients
depend on $\dim X$, lower and upper bounds for the density $\operatorname{vol}|_V$
with respect to the Lebesgue measure (e.g.~if $V$ is relatively compact),
and uniform bounds for the first $\dim X + 1$ derivatives
of $g$. This estimate is an adaption of H\"ormander's proof for
\cite[Th.~3.1]{Hoer} on p.~154.
\smallskip\\
\indent
Assume now $P$ is compactly supported in $U$.
Using a Lebesgue 
number we find finitely many relatively compact chart neighborhoods $\{V_a\}_{a \in A}$
covering $L$ such that for all $a,b \in A$ the union ${V_a \cup V_b}$ is also a chart. Pick $\chi_a \in C^\infty_\mathrm{cpt}(V_a)$ with $\sum_a \chi_a^2 \equiv 1$ on $L$.
For $s\in C^\infty_\mathrm{cpt}(\{y_0\}\times X, E^0)$ we find
\begin{align*}
\|P_{y_0}s\|_{L^2} &\leq \|\phi P_{y_0}(\phi s)\|_{L^2} + \|(1-\phi^2) \widetilde{p}_{y_0}\circ s\|_{L^2}
&&\text{\eqref{eqn:P-cpt-supported}}\\
& \leq \sum_{a,b} \|\chi_a^2\phi P_{y_0}(\chi_b^2\phi s)\|_{L^2} + C_{y_0}\cdot \|s\|_{L^2}
&&\text{\eqref{eqn:P-cpt-supported-bounds}}\\
&\leq \sum_{a,b} D_{y_0,a,b}\cdot\|\chi_b s\|_{L^2}+ C_{y_0}\cdot \|s\|_{L^2}
&&\text{\eqref{equation.2.5}}\\ 
&\leq \mathrm{const}\cdot \|s\|_{L^2}.
\end{align*}
Hence $P_{y_0}$ extends to a bounded operator on $L^2$\nobreakdash-sections.
To prove continuity we can use the same estimates for $\|P_ys - P_{y_0}s\|_{L^2}$ as above, now using \eqref{continuity-of-PSIDO} and \eqref{eqn:P-cpt-supported-continuous} to
see that the constants tend to zero for $y$ in a neighborhood of $y_0$.

\smallskip\noindent
\textup{(ii)}\hskip\labelsep
A convenient parametrix $Q_{y_0}$ for $P_{y_0}$ is obtained by patching local parametrices on the charts $U_a$ with the \emph{exact} inverse $\widetilde{p}_{y_0}^{-1}$ outside $L$,
which is a bounded operator by the lower bound of
\eqref{dfn.symb-compactly-supp-bounds}.

Then we find ${Q_{y_0}P_{y_0} - 1 = R_1}$, ${P_{y_0}Q_{y_0} - 1 = R_2}$ with
$R_1, R_2$ having compactly supported $C^\infty$\nobreakdash-kernels. These define compact operators on $L^2$.
The remaining assertions are easily verified.

\smallskip\noindent
\textup{(iii)}\hskip\labelsep
This is true locally, since we can define $P$ by \eqref{equation.2.3}
for $p^{(f)}_y(x,\xi) \coloneqq f(x)p_{y,\xi}$,
and then patched to a global result using a partition of unity on $X$.
When the elliptic symbol family $p$ is supported in $L\subset U \subset X$,
by definition we have $\widetilde{p}$ outside $L$.
Pick $\phi \in C^\infty_\mathrm{cpt}(U,[0,1])$
with $\phi|_L \equiv 1$. Then, beginning with an arbitrary $Y$\nobreakdash-family $P$ in $\PsiDO_Y^0(X; E^0,E^1)$ with principal symbol family $p$, replace it by ${\phi_y P_y(\phi s) + (1-\phi^2)\widetilde{p}_y\circ s}$ to get one that is compactly supported in $U$.
The reality condition can be ensured by passing to $\frac12(\overline{P_y(s)} + P_y(\overline{s})$
and similarly $\frac12(P_y(s) \pm P^\dagger_y(s)$ ensures skew-adjointness
or self-adjointness.
Finally, a straight-line interpolation between two
families in $\PsiDO_Y^0(X; E^0,E^1)$ with given principal symbols remains a family 
in $\PsiDO_Y^0(X; E^0,E^1)$ with that symbol. In the compactly supported case we use
Lemma~\ref{lem:convex} here.
\end{proof}

\medskip

\noindent{\small\sc The Mathematical Institute, Radcliffe
Observatory Quarter, Woodstock Road, Oxford, OX2 6GG, U.K.

\noindent E-mail: {\tt upmeier@maths.ox.ac.uk.}}

\end{document}